\newtheorem{sz}{Satz}[section]
\newtheorem{Th}[sz]{Theorem}
\newtheorem{Prop}[sz]{Proposition}
\newtheorem{Rem}[sz]{Remark}
\newtheorem{Cor}[sz]{Corollary}
\newtheorem{Def}[sz]{Definition}
\newtheorem{Lem}[sz]{Lemma}
\newtheorem{Ex}[sz]{Exemple}
\def\cal{\mathcal}
\def\bb{\mathbb}      
\def\a{\alpha }
\def\b{\beta }
\def\f{\varphi }
\def\G{\Gamma }
\def\part{\partial }
\def\s{\sigma }
\def\dim{{\rm dim\; }}
\def\id{\mathrm{id}}
\def\subsetint{{\ {\subset}\hskip -2.2mm{\raisebox{.25ex}
{$\scriptscriptstyle\subset$}}\ }}
\def\resto#1#2{{
#1\hskip 0.4ex\vline_{\hskip 0.2ex\raisebox{-0,2ex}
{{${\scriptstyle #2}$}}}}}
\def\textmap#1{\mathop{\vbox{\ialign{
                                 ##\crcr
     ${\scriptstyle\hfil\;\;#1\;\;\hfil}$\crcr
     \noalign{\kern 1pt\nointerlineskip}
     \rightarrowfill\crcr}}\;}}
\def\qmod#1#2{{\hbox{}^{\displaystyle{#1}}}\!\big/\!\hbox{}_{
\displaystyle{#2}}}
\begin{document}
\def\C{\mathbb C}
\def\Q{\mathbb Q}
\def\R{\mathbb R}
\def\Z{\mathbb Z}
\def\vol{\mathrm{vol}}
\def\Hom{\mathrm{Hom}}
\def\Pic{\mathrm{Pic}}
\title {Infinite bubbling in non-K\"Ahlerian geometry}
\author{Georges Dloussky}
\address{Georges Dloussky, LATP, CMI, Universit\'e de Provence, 39 rue F. Joliot-Curie, 13453 Marseille Cedex 13, France}
\email{dloussky@cmi.univ-mrs.fr}
\author{Andrei Teleman}
\address{Andrei Teleman, LATP, CMI, Universit\'e de Provence, 39 rue F. Joliot-Curie, 13453 Marseille Cedex 13, France}
\email{teleman@cmi.univ-mrs.fr}
\begin{abstract}

In a holomorphic family $(X_b)_{b\in B}$ of non-Kählerian  compact manifolds, the holomorphic curves representing a fixed 2-homology class do not form a proper family in general. The deep source of this fundamental difficulty in non-Kähler geometry is the {\it explosion of the area} phenomenon: the area of a curve $C_b\subset X_b$ in a fixed 2-homology class   can diverge as $b\to b_0$. This phenomenon occurs frequently in the deformation theory of class VII surfaces. For instance it is well known that any minimal GSS surface $X_0$ is  a degeneration  of a 1-parameter family of simply blown up primary Hopf surfaces $(X_z)_{z\in D\setminus\{0\}}$, so one obtains non-proper families of exceptional divisors $E_z\subset X_z$ whose area diverge as $z\to 0$. Our main goal is to study in detail this non-properness phenomenon in the case of class VII surfaces. We will prove that, under certain technical assumptions, a lift $\widetilde E_z$ of $E_z$ in the universal cover $\widetilde X_z$ does converge to an effective divisor $\widetilde E_0$ in $\widetilde X_0$, but this limit divisor is not compact. We prove that this limit divisor is always bounded  towards the pseudo-convex end of $\widetilde X_0$ and that, when $X_0$ is a minimal surface with global spherical shell, it is given by an infinite series of {\it compact} rational curves, whose coefficients can be computed explicitly. This phenomenon -- degeneration of a family of compact curves to  an infinite union of compact curves -- should be called {\it infinite bubbling}.

We believe that such a decomposition result holds for any family of class VII surfaces whose generic fiber is a blown up primary Hopf surface. This statement would have important consequences for the classification of class VII surfaces.

\end{abstract}

\thanks{The authors wish to thank the unnamed referee for the careful reading of the paper and for his useful suggestions.}

\maketitle

MSC:  32Q65, 32Q57, 32Q55

\tableofcontents

\section{Introduction: What happens with the exceptional curves in a deformation of class VII surfaces?}

\subsection{Properness properties in Kählerian and symplectic geometry}

We will use the following notation: for a complex manifold $X$ and a class $c\in H_{2d}(X,\Z)$ we denote by ${\cal B}^c(X)$  the Barlet space \cite{Ba} of $d$-cycles in $X$ representing the class $c$.

Let  $p:{\cal X}\to B$  be a family of compact $n$-dimensional complex manifolds parameterized by a  complex manifold $B$.  In other words, $p$ is a proper holomorphic submersion with connected fibers. We denote by $\underline{H}_m$,  $\underline{H}^m$ the locally constant sheaves (coefficient systems) on $B$ which are associated with the presheaves $B\supset U\mapsto H_m(p^{-1}(U),\Z)$, respectively $B\supset U\mapsto H^m(p^{-1}(U),\Z)$. Note that the sheaf $\underline{H}^m$ coincides with the $m$-th direct image $R^mp_*(\underline{\Z})$ of the constant sheaf $\underline{\Z}$ on ${\cal X}$. 
We fix   $e=(e_b)_{b\in B}\in H^0(B,\underline{H}_{2d})$, and we denote by ${\cal B}^e_B({\cal X})$ the relative Barlet cycle space associated with $e$. This space is a closed subspace of the cycle space ${\cal B}({\cal X})$ and, as a set, it coincides with the union $\cup_{b\in B} {\cal B}^{e_b}(X_b)$. The obvious map 
$${\cal B}^e(p):{\cal B}^e_B({\cal X})\to B$$
 is holomorphic and its (reduced) fibers are precisely the spaces  ${\cal B}^{e_b}(X_b)$.   A fundamental consequence of Bishop compactness theorem states that in the Kählerian framework the projection ${\cal B}^e(p):{\cal B}^e_B({\cal X})\to B$ is also proper.  More precisely:
\begin{Prop}
Let  $H$ be a Hermitian metric on ${\cal X}$ such that the restrictions to the fibers  of the corresponding Kähler form $\Omega_H\in A^{1,1}({\cal X},\R)$ are all closed\footnote{We can always construct such  a Hermitian metric in a neighborhood of a fixed fiber $X_{b_0}$ if this  fiber is Kählerian. It suffices to consider a smooth family of Kähler metrics $h_b$ on $X_b$ (using the openness property of the Kähler condition) for $b$ close to $b_0$, and to add the pull-back of a Hermitian metric on the base.}. 
Then the map ${\cal B}^e(p):{\cal B}^e_B({\cal X})\to B$ is proper.

\end{Prop}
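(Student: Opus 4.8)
The plan is to run the classical argument: reduce properness to a sequential statement, use Wirtinger's formula and the fiberwise closedness of $\Omega_H$ to turn the area of a cycle in the class $e_b$ into a topological quantity, bound that quantity locally on $B$, and then apply Bishop's compactness theorem. Concretely, since all the spaces involved are metrizable and $\sigma$-compact, it suffices to show that for every compact $K\subset B$ the preimage $\mathcal{B}^e(p)^{-1}(K)$ is sequentially compact. So I would start from a sequence of cycles $(C_n)_n$ with $b_n:=\mathcal{B}^e(p)(C_n)\in K$, pass to a subsequence so that $b_n\to b_0\in K$, and then try to extract a sub-subsequence converging in $\mathcal{B}^e_B(\mathcal{X})$ to a cycle lying over $b_0$.

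The heart of the matter is the area estimate. For any $d$-cycle $C\subset X_b$, Wirtinger's identity gives $\vol(C)=\frac1{d!}\int_C (\Omega_H|_{X_b})^d$, since $\Omega_H|_{X_b}$ is the fundamental form of the Hermitian metric induced on $X_b$; this is a pointwise statement and requires no closedness. By hypothesis $\Omega_H|_{X_b}$ is closed, hence so is $(\Omega_H|_{X_b})^d$, and therefore $\int_C (\Omega_H|_{X_b})^d=\langle[(\Omega_H|_{X_b})^d],e_b\rangle$ depends only on $b$ and on the flat section $e$. I would then fix a contractible neighbourhood $U\ni b_0$ and, by Ehresmann's theorem, a $C^\infty$ trivialization $\Psi:p^{-1}(U)\xrightarrow{\sim}X_{b_0}\times U$. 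Pushing everything down to $X_{b_0}$ one gets, for $b\in U$, a closed $2d$-form $\eta_b:=(\Psi_b^{-1})^*\big((\Omega_H|_{X_b})^d\big)$ on $X_{b_0}$, depending smoothly on $b$, while the flatness of $e$ identifies $e_b$ with the fixed class $e_{b_0}\in H_{2d}(X_{b_0})$. Hence for $b\in U$ and $C\in\mathcal{B}^{e_b}(X_b)$ one has $\vol(C)=\frac1{d!}\langle[\eta_b],e_{b_0}\rangle$, a quantity that varies continuously with $b$; in particular it is bounded, say by $M$, on some relatively compact neighbourhood $U'\Subset U$ of $b_0$.

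The conclusion then follows from Bishop's compactness theorem. For $n$ large, $b_n\in U'$, so the cycles $C_n$ are supported in the compact set $p^{-1}(\overline{U'})$ and have $\vol(C_n)\le M$; by Bishop a subsequence $C_{n_k}$ converges in $\mathcal{B}(\mathcal{X})$ to a $d$-cycle $C_0$. Since $\mathcal{B}^e_B(\mathcal{X})$ is closed in $\mathcal{B}(\mathcal{X})$ and each $C_{n_k}$ lies in it, so does $C_0$; and since $|C_{n_k}|\subset X_{b_{n_k}}$ with $b_{n_k}\to b_0$, Hausdorff convergence of the supports forces $|C_0|\subset X_{b_0}$, i.e. $C_0\in\mathcal{B}^{e_{b_0}}(X_{b_0})$. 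Thus $\mathcal{B}^e(p)^{-1}(K)$ is sequentially compact, which proves properness.

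The only genuinely delicate step is the second one: one must make sure that Wirtinger's formula together with the fiberwise closedness really converts the area into a topological pairing, and that the relevant de Rham class varies continuously across the family — this is precisely where the Ehresmann trivialization and the smooth dependence of $\Omega_H$ on $b$ enter. Note that $[\eta_b]$ need not be \emph{constant} in $b$ (the Kähler class may move under the deformation); continuity, hence local boundedness, is all that is needed, and this is exactly the ingredient that fails in the non-Kählerian situation analysed in the rest of the paper, where $\Omega_H$ cannot be chosen fiberwise closed. Everything else — the reduction to sequential compactness and the invocation of Bishop's theorem together with the closedness of $\mathcal{B}^e_B(\mathcal{X})$ — is routine.
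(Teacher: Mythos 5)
Your proposal is correct and follows essentially the same route as the paper: the volume of a cycle in the class $e_b$ is converted, via the fiberwise closedness of $\Omega_H$, into a topological pairing that varies continuously in $b$ (the paper packages your Ehresmann-trivialization step as the smooth section $b\mapsto[\resto{\Omega_H^d}{X_b}]$ of the bundle $\underline{H}^{2d}\otimes\R$), giving local boundedness of the volume over compact subsets of $B$. You then unwind the relative-compactness criterion of Barlet (Theorem 1 of \cite{Ba}), which the paper invokes directly, into an explicit Bishop-compactness and limit-of-supports argument; this is only a difference in presentation, not in substance.
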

\begin{proof}
The total space of the locally constant sheaf $\underline{H}^{2d}\otimes\R=R^{2d} p_*(\underline{\R})$ is a real vector bundle on $B$, and the assignment   $b\mapsto [\resto{\Omega_H^d}{X_b}]$ defines a smooth section $\chi$ of this bundle.   For a cycle $C\in B^{e_b} (X_b)$ one has
$$\vol_H(C)=\int_C \Omega_H^d= \langle \chi(b),e_b\rangle\ .
$$
Therefore the volume map $\vol_H:{\cal B}^e_B({\cal X})\to\R_{\geq 0}$ can be written as $ {\cal B}^e(p)^*\langle  \chi,e\rangle$, so it is bounded on ${\cal B}^e(p)^{-1}(K)$ for every  compact set $K\subset D$.  It suffices to apply the relative-compactness criterion given by Theorem 1 of \cite{Ba}.
\end{proof}
This properness result can be intuitively interpreted as follows: 
\\ \\
{\it The cycles in a fixed homology class of a 
compact complex manifold $X_b$  cannot disappear as  $X_b$ tends to a limit compact Kähler  manifold $X_{b_0}$.}
\\ \\
The properness property has a particular signification
 in the case $n=2$, $d=1$.
  Indeed, in this case  the Barlet
 space ${\cal B}^e(X_b)$ is just a moduli space of effective divisors representing the class $e$, and this moduli space can be identified with the moduli space of Seiberg-Witten monopoles associated with the $e$-twisted canonical $\mathrm{Spin}^c$-structure of $X_b$ and a suitable perturbation of the monopole equation (see \cite{W}, \cite{OTe}, \cite{Te4}). Therefore, in this case, the properness property above is a consequence of the general properness property  -- with respect to the space of parameters -- for moduli spaces of Seiberg-Witten monopoles, which is  crucial  for proving that  Seiberg-Witten invariants are well-defined. \\

A similar  properness result --  based on a version of Gromov compactness theorem -- 
 holds in symplectic geometry:  let ${\cal X}\to B$ be a proper smooth map with connected fibers in the differentiable category, suppose that $\Omega$ is a smooth 2-form on ${\cal X}$ whose restrictions to the fibers $\omega_b:=\resto{\Omega}{X_b}$ are all  symplectic forms (in particular closed), and consider a smooth family $(J_b)_{b\in B}$ of almost complex structures on the fibers, such that $J_b$ is compatible with $\omega_b$ for any $b\in B$, and a section   $e=(e_b)_{b\in B}\in H^0(B,\underline{H}_2)$. Let $(b_n)_{n\in\mathbb{N}^*}$ be a sequence in 
$B\setminus\{b_0\}$ converging to $b_0$ and consider, for every $n\in\mathbb N$, an almost holomorphic map 
$$\varphi_n:(\Sigma,j_{n})\to (X_{b_n},J_{b_n})$$
 representing   
 $e_{b_n}\in H_2 (X_{b_n},\Z)$, where $j_n$ is an almost
 holomorphic structure on a differentiable closed surface $\Sigma$. Since 
the map $b\mapsto[\omega_b]\in H^2(X_b,\R)$ defines a smooth section in the vector bundle $\underline{H}^2\otimes\R$, we get a bound of the area sequence 
$$\bigg(\int_{\Sigma} \varphi_n^*(\omega_{b_n})\bigg)_{n\in\mathbb N}=
\big(\langle [\omega_{b_n}], e_{b_n}\rangle\big)_{n\in\mathbb N}\ .$$
   Therefore 
 (using the terminology and the main result of \cite{Pa}) there exists a subsequence $(b_{n_k})$ of $(b_n)_{n\in\mathbb{N}}$ such that that the sequence $(\varphi_{n_k})_{k\in\mathbb N}$ converges  in the ${\cal C}^m$-topology  to a  {\it  cusp pseudo-holomorphic curve}
$$\varphi:\cup _l \Sigma_l\to X_{b_0}\ .
$$
 
A similar convergence result  can be proved for sequences 
of cusp curves. In other words, in 
symplectic geometry -- as in Kählerian
 geometry -- pseudo-holomorphic curves representing a fixed homology class  can degenerate, but {\it they cannot disappear in a limit process}. An important particular case is the one when $j_n=j$ is independent of $n$. In this case it is known that the domain $\cup _l \Sigma_l$ of the limit cusp curve  is obtained from $\Sigma$ by adding a union of {\it bubble trees} of 2-spheres. In particular, when $(\Sigma, j)=\bb P^1$, all the irreducible components of the limit cusp curve will be 2-spheres.

These properness results and their natural generalizations for perturbed pseudo-holomorphic curves, play a crucial role in proving that (different versions of)  Gromov-Witten invariants are well-defined. 
\\

For deformations of compact non-Kählerian manifolds the analogue properness results do not hold.  {\it The properness property fails even in the cases when all cycle spaces ${\cal B}^e(X_b)$ are compact.}   This is one of the main difficulties in non-Kählerian geometry, which has dramatic consequences, for instance: in non-Kählerian geometry one cannot use enumerative methods of curves to construct deformation invariants, and this {\it even if one  considers only moduli spaces of curves which are finite for all manifolds in the considered deformation class!}    In non-Kählerian geometry  holomorphic curves  representing a given homology  class {\it can} disappear in a limit process, because the area of the curves in a given homology class is not a priori bounded, so it can explode to infinity when one approaches a  fixed fiber $X_{b_0}$. Probably the most convincing examples which illustrates this difficulty (and its dramatic consequences) appear in the deformation theory of class VII surfaces.

\subsection{Deformations of class VII surfaces. What happens with the exceptional curves?}

\label{DefClassVII}

Let $X$ be a  class VII surface with $b_2:=b_2(X)>0$
 admitting a global spherical shell (a GSS), see section \ref{GSSsurfaces}
 for details. We will call such a surface a GSS surface.
For such a surface  one has $\pi_1(X,x_0)\simeq\Z$, and the universal cover $\widetilde X$ has two ends, a pseudo-convex end and a pseudo-concave end. Taking a non-separating  strictly pseudo-convex embedded 3-sphere $\Sigma\subset X$, consider a lift $A$ of $X\setminus\Sigma$ in $\widetilde X$ and note that $\bar A$ is a compact  surface bounded by two lifts  of $\Sigma$ to $\widetilde X$; the pseudo-convex end of $\widetilde X$ corresponds to the pseudo-convex boundary component of $A$ \footnote{Note that the terminology used here for the two ends of $\widetilde X$ is different from the one used in \cite{D1}, but agrees with the terminology used in the theory of complete, non-compact complex manifolds. }. 

For such a surfaces  one also has  $H_2(X,\Z)\simeq H^2(X,\Z)\simeq\Z^{b_2}$, and the intersection form on this group is standard, i.e. there exists a basis $(e_0,\dots,e_{b_2-1})$ in $H_2(X,\Z)$ such that $e_i\cdot e_j=-\delta_{ij}$. Changing signs  if necessary one can assume that 
$$-c_1(X)=c_1({\cal K}_X)=\sum_{i=0}^{b_2-1} e_i\ ,$$
and  the basis obtained  in this way is unique up to order. 

By results of Kato and Dloussky it is well-known that any minimal GSS surface   $X_0$    is deformable in blown up primary Hopf surfaces. More precisely, there exists a holomorphic family ${\cal X}\to D$ parametrized by the unit disk $D\subset\C$  having $X_0$ as central fiber such that $X_z\simeq  [\hat H_z]_{P(z)}$ is biholomorphic to  a primary Hopf surface $H(z)$ blown up in a finite set of simple points $P(z)=\{p_0(z),\dots,p_{b_2-1}(z)\}$. Therefore, for any $z\ne 0$, the fiber $X_z$ has $b_2$ exceptional curves $E_{0,z},\dots,E_{b_2-1,z}$ representing the classes $e_i$. On the other hand the central fiber $X_0$ is minimal  hence, by a result of Nakamura (see \cite{N2}, Lemma 1.1.3] ), it does not admit any  effective divisor at all representing  a class $e_i$. This shows that the rational curve $E_{i,z}$ has no limit at all as $z\to 0$, even if one  admits cusp curves (and bubbling trees) in the limit process. 

Note that, for $z\ne 0$, the exceptional curve $E_{i,z}$ is a regular point in the moduli space of divisors representing the class $e_i$ on $X_z$, so if one tries to define a Gromov-Witten type "invariant" of $X_z$ by counting the rational curves in this  homology class, the result will be 1. But the same "invariant" will be 0 for $X_0$. 

Similar phenomena appear for deformations of minimal GSS surfaces. For instance, there exists a family ${\cal X}\to D$ of minimal GSS surfaces with $b_2=1$ whose generic fiber $X_z$ ($z\ne 0$) has only one irreducible curve $C_z$, which  is   homologically trivial,  whereas $X_0$ has only one irreducible curve $D$, which represents the class $-e_0$. In this family all curves are singular rational curves with a simple node.  In both cases one can study explicitly the evolution of the  area of the curves  $E_{i,z}$, $C_z$ and see that their area  tends to infinity as $z\to 0$. 

 The conclusion is clear: one cannot hope to define deformation invariants for class VII surfaces by counting holomorphic curves in a given homology class. This is one of the major difficulties in the classification of class VII surfaces, because the structure  of curves on these surfaces is related to the fundamental classification problem.  Indeed, by the main result of \cite{DOT} one knows that the global spherical conjecture  (which, if true, would complete the classification of class VII surfaces) reduces to the conjecture:
\vspace{2mm}\\
{\bf C:} {\it Any minimal class VII surface $X$ with $b_2(X)>0$ has $b_2(X)$ rational curves.
}
\vspace{2mm}\\
Therefore, proving existence of curves is a fundamental 
problem in the theory of class VII surfaces.  Unfortunately
 the examples above show that one cannot hope to prove such existence 
results using invariants of Gromov-Witten type. Note however that, taking into account
 this difficulty (the lack of Gromov-Witten type invariants), the
 conjecture above 
becomes quite intriguing. Indeed, 
this conjecture (which is true for all GSS surfaces) implies that the total number of rational curves {\it is} an invariant for  class VII surfaces, although this invariant cannot be obtained  as the sum of Gromov-Witten type invariants defined for individual homology classes (because, as we have just seen, the number of rational curves in an individual class is {\it not} an invariant). In other words, although the homology classes represented by rational curves change in a holomorphic family, the total number of rational curves remains always constant  in deformations of known class VII surfaces. In particular, when (in a limit process) a homology class loses a rational curve, there should be always another  class which gets one. This  ``global compensation phenomenon"  between different moduli spaces of rational curves is not understood yet, so conjecture ${\bf C}$ is not known yet even for surfaces which are degenerations of the known GSS surfaces.
\vspace{2mm}

Although the cause of the ``curve vanishing " (non-properness) phenomenon is clear -- explosion of the area in the limit process -- one can still wonder when contemplating the first family described above: \\
\\
\centerline {\it ``What happens with the exceptional curves  $E_{i,z}$ when $z\to 0$?"}
\\

 The authors have  many times been  asked this question when they gave talks on class VII surfaces in the past. The easy answer we have always given 
\\
\\
\centerline {\it ``They just disappear, because their area explodes",} 
\\
\\although perfectly correct,  has  never seemed to fully satisfy the audience.   Therefore we decided to investigate in detail this phenomenon trying to understand what happens geometrically with the exceptional curve $E_{i,z}$ as it gets bigger and bigger. 

We came to the idea of an "infinite bubbling  tree" empirically,  when the second author noticed that recent results  of the  first author \cite{D4} have an intriguing consequence: Although for a minimal GSS surface   $X_0$ the homology class  $e_i$  is not represented by an effective divisor (as we saw above), any   of its lifts $\widetilde e_i$ in the universal cover 
 $\widetilde X_0$ is represented -- in the homology with closed supports -- by an infinite series of compact curves.  Moreover, in every case   we obtained a well  defined series representing the given lift, and we saw that always this series escapes to  infinity towards the pseudo-concave end of $\widetilde X_0$. Therefore we came to idea to prove that (although $E_{i,z}$ accumulates to the whole $X_0$ as $z\to 0$ by the Remmert-Stein theorem) a fixed lift $\widetilde E_{i,z}$ of $E_{i,z}$ in the universal covering $\widetilde X_z$ of $X_z$ behaves in a controllable way as $z\to 0$, namely  that
\begin{Th} \label{firstTh}
The family of rational curves $(\widetilde E_{i,z})_{z\in D\setminus\{0\}}$  fits in a flat family $\widetilde{\cal E}_i$ of effective divisors  over the whole base $D$. In particular  this family has a limit as $z\to 0$, which is given by an infinite series of compact curves which escapes to  infinity towards the pseudo-concave end of $\widetilde X_0$. 
\end{Th}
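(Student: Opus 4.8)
The plan is to construct the flat family $\widetilde{\cal E}_i$ not by taking a naive limit of the curves $\widetilde E_{i,z}$ inside $\widetilde{\cal X}$ (which does not converge, since $E_{i,z}$ accumulates to all of $X_0$ by Remmert--Stein), but by working on the relative universal cover and exploiting the structure of the family ${\cal X}\to D$ as a deformation of a blown-up primary Hopf surface into a minimal GSS surface. First I would set up the relative universal covering $\widetilde{\cal X}\to {\cal X}\to D$: since $\pi_1(X_z)\simeq\Z$ for every $z$ and $\pi_1(X_0)\simeq\Z$ as well, the fibrewise universal covers glue to a (non-proper) complex manifold $\widetilde{\cal X}$ with a $\Z$-action, and one fixes compatible lifts $\widetilde\Sigma_z$ of a non-separating strictly pseudoconvex $3$-sphere, hence compatible fundamental domains $A_z\subset\widetilde X_z$. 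The lift $\widetilde E_{i,z}$ is then pinned down by requiring it to lie in a definite translate of $\bar A_z$ near the pseudo-convex end. The key geometric input is that, towards the \emph{pseudo-convex} end, $\widetilde X_z$ looks (uniformly in $z$) like a neighbourhood of the exceptional divisor of a blow-up of $\C^2$, so the curve $\widetilde E_{i,z}$ stays in a fixed compact region $K\subset\widetilde{\cal X}$ adjacent to the pseudo-convex boundary, with \emph{bounded} area on $K$.

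Next I would invoke Bishop's theorem on the bounded part: on the relatively compact piece $K\cap\widetilde X_z$ the areas $\vol_H(\widetilde E_{i,z}\cap K)$ are uniformly bounded (they are controlled by the finitely many components of $E_{i,z}$ that survive near the pseudo-convex end, exactly as in the K\"ahler estimate of Proposition~1.1 applied locally), so a subsequence converges to an effective analytic cycle $Z_0$ in $K\cap\widetilde X_0$. The content of the theorem is then that $Z_0$ extends, as an analytic cycle with closed support, across all of $\widetilde X_0$, and that the resulting extension is the fibre over $0$ of a flat family. Here I would use the explicit combinatorial/numerical description coming from \cite{D4}: the lift $\widetilde e_i\in H_2^{\mathrm{lf}}(\widetilde X_0,\Z)$ is represented by an explicit infinite series $\sum_{k\ge 0} m_k D_k$ of compact rational curves $D_k$ (the components of the maximal divisor, pushed off towards the pseudo-concave end under the $\Z$-action), with computable multiplicities $m_k$. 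The plan is to show that the partial sums $\sum_{k\le N} m_k D_k$ are precisely the traces on larger and larger fundamental pieces of the limit of $\widetilde E_{i,z}$, by matching intersection numbers with the (finitely many, in any fixed compact piece) components and using that on $X_z$ the divisor class $e_i$ is rigid and effective with known intersection behaviour against the contracting cycle.

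To upgrade pointwise/subsequential convergence to a genuine flat family over $D$, I would produce $\widetilde{\cal E}_i$ as a closed analytic subspace of $\widetilde{\cal X}$: over $D\setminus\{0\}$ it is the graph $\bigsqcup_{z\ne 0}\widetilde E_{i,z}$, which is analytic and flat over $D\setminus\{0\}$ since each $\widetilde E_{i,z}$ is a Cartier divisor (exceptional curve) moving holomorphically; one then takes its closure $\overline{\widetilde{\cal E}_i}$ in $\widetilde{\cal X}$ and checks that the central fibre is $Z_0=\sum m_k D_k$ with the right multiplicities, and that $\overline{\widetilde{\cal E}_i}\to D$ is flat (equidimensionality of fibres plus reducedness of the base $D$, via the local criterion for flatness over a smooth curve — any family of divisors over a smooth $1$-dimensional base is automatically flat). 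Finally, that the limit escapes to the pseudo-concave end is read off from the multiplicity formula: the supports of the $D_k$ march off towards the pseudo-concave boundary under iteration of the deck transformation, so no compact part of $\widetilde X_0$ near the pseudo-concave end is missed, yet every compact region meets only finitely many $D_k$, so the cycle is locally finite but not finite — i.e. the limit is an effective divisor with closed support which is not compact.

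The main obstacle I expect is the last matching step: proving that the closure $\overline{\widetilde{\cal E}_i}$ really has $Z_0=\sum_k m_k D_k$ as its scheme-theoretic central fibre with the \emph{correct multiplicities} predicted by \cite{D4}, rather than some smaller cycle or a cycle with wrong multiplicities. This requires a precise understanding — uniform in $z$ — of how the exceptional curve $E_{i,z}$ ``unwinds'' along the infinite cyclic cover as $z\to 0$; concretely one must show that the pieces of $\widetilde E_{i,z}$ that would converge to $m_k D_k$ do not themselves run off to infinity but are held in place by the pseudoconvexity at the other end, which is exactly where the boundedness-towards-the-pseudoconvex-end statement (to be proved separately in the paper) must be used as the crucial a priori estimate.
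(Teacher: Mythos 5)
Your outline (lift to the relative universal cover, control the lifts at the pseudo-convex end, take the closure over the puncture, check flatness over the disc, identify the central fibre by intersection numbers) is broadly parallel to the paper's, but the step you call the ``key geometric input'' is both false as stated and deferred rather than proved, and it is exactly the heart of the theorem. The curves $\widetilde E_{i,z}$ do \emph{not} stay in a fixed compact region $K$ adjacent to the pseudo-convex boundary: their area diverges and they stretch arbitrarily far towards the pseudo-concave end (this is the infinite bubbling), so a Bishop argument based on containment in $K$ with bounded area would produce a \emph{compact} limit cycle, contradicting the statement being proved. What is true is only the one-sided bound: the whole family $(\widetilde E_{i,z})_{z\ne 0}$ avoids a fixed translate $f^{-m}(\widetilde{\cal X}^+_U)$ of the pseudo-convex side. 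The paper proves this not by an area estimate but by an open/closed argument in $U\cap B_e$: the set of parameters for which the compact divisor $\widetilde E_{i,z}$ is disjoint from $f^{-m}_z(\widetilde X^+_z)$ is open because the divisors are compact, and closed because a compact divisor lying in the pseudo-concave side cannot touch the strictly pseudo-convex separating sphere $f^{-m}_z(\widetilde \Sigma_z)$. Once this is known, no Bishop-type compactness and no local area bound are needed: the Remmert--Stein theorem applies to the closure of $\widetilde{\cal E}_0=\bigcup_{z\ne 0}\widetilde E_{i,z}$ precisely because that closure misses points of $\widetilde X_0$, and flatness over $D$ follows from the fibre-dimension criterion (Lemma \ref{flatnesslemma}). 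Your claimed uniform bound on $\mathrm{vol}_H(\widetilde E_{i,z}\cap K)$ (``controlled by the finitely many components surviving near the pseudo-convex end'') is unjustified; nothing in the non-K\"ahler setting gives it a priori, and the paper's route avoids it altogether.

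The second genuine gap is the identification of the central fibre with the series $\sum_{j\ge 0}\a^i_j\,\widetilde C_{i+j}$. Matching intersection numbers is essentially the paper's uniqueness argument (Remark \ref{unicity}), but it only applies after one knows that $\widetilde E_{i,0}$ is a locally finite sum of \emph{compact} curves, i.e.\ that the limit divisor has no non-compact irreducible component bounded towards the pseudo-convex end. This is not automatic: the paper proves it (Lemma \ref{final}) by contracting all curves beyond a level $k$ via the map $q_k:\widetilde X_0\to \widehat X_k$ and applying the second Remmert--Stein theorem in a punctured ball, and it is exactly the point that the authors cannot establish in the more general setting of Theorem \ref{thirdTh}. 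You also do not argue why the limit cannot simply be compact; in the paper this uses that a limit of simply exceptional divisors is simply exceptional (Lemma \ref{closed}), which contradicts the minimality of $X_0$ via Nakamura's lemma. Without the one-sided boundedness estimate, the exclusion of non-compact components, and the non-compactness of the limit, the proposal does not yet constitute a proof.
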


The main purpose of this article is to prove this result.  More precisely we will prove that the limit given by the theorem is  precisely the series representing $\widetilde e_i$ we discovered empirically. 
 We called this phenomenon "infinite bubbling", and we believe that this happens in all deformations of class VII surfaces when area explosion occurs.  
In fact we will prove a more general result:
\begin{Th} \label{secondTh} Let ${\cal X}\to \Delta$ be any holomorphic family of GSS  surfaces  with $b_2>0$, parameterized by the standard ball $\Delta\subset\C^r$.  Then for any lift $\widetilde e_i$  of a homology class $e_i$ to the universal cover $\widetilde{\cal X}$, there exists   an effective divisor $\widetilde {\cal E}_i\subset \widetilde {\cal X}$ flat over $\Delta$ such that
\begin{enumerate} 
\item The fiber $\widetilde E_{i,z}\subset \widetilde X_z$ of $\widetilde {\cal E}_i$ over a point $z\in\Delta$ is a lift of the exceptional curve $E_i$ representing $e_i$, for every $z\in\Delta$ for which $X_z$ contains such an exceptional curve.
\item If  $X_z$ does not admit any exceptional effective divisor in the class $e_i$ then the corresponding fiber  $\widetilde E_{i,z}$ of $\widetilde {\cal E}_i$ is a series of compact curves which escapes to infinity towards the pseudo-concave end of $\widetilde X_z$. This divisor represents the image of the class $\widetilde e_i$ in  the Borel-Moore  homology group $H_2^{\rm BM}(X,\Z)$. 
\end{enumerate}
\end{Th}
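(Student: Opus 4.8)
The plan is to obtain $\widetilde{\mathcal E}_i$ by a relative version of the construction of \cite{D4}, performed on the universal covering of the family, and to read properties~(1) and~(2) off this model; the Borel--Moore statement in~(2) is then exactly the output of \cite{D4} applied fibrewise.

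\emph{Step 1: the relative model.} After restricting to a neighbourhood of an arbitrary point of $\Delta$ --- the construction being canonically attached to the lift $\widetilde e_i$, the local pieces will patch to a global divisor over $\Delta$ --- I would use the relative form of the Kato--Dloussky description of GSS surfaces from Section~\ref{GSSsurfaces}: the family $\mathcal X\to\Delta$ is governed by a holomorphic family of contracting germs $(F_z)_{z\in\Delta}$, and the universal covering $\widetilde{\mathcal X}\to\Delta$ inherits, fibrewise, the structure of an infinite tower obtained by iterating, in both directions, $b_2$-fold relative blow-ups of the trivial ball bundle $\mathbb B\times\Delta$ along $b_2$ (possibly infinitely near) holomorphic sections, glued by the relative embeddings induced by $F$. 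The deck group $\Z$ acts by the ``shift'', one end of each fibre being pseudo-convex and the other pseudo-concave; towards the pseudo-convex end the family is, uniformly in $z$, modelled on a fixed region which contains no compact curve and carries a plurisubharmonic exhaustion $\varphi$ independent of $z$.

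\emph{Step 2: construction of $\widetilde{\mathcal E}_i$.} Each relative blow-up occurring in the tower yields, for the $i$-th centre, an exceptional divisor which is automatically flat over $\Delta$ with irreducible $\mathbb P^1$-fibres, since blowing up a section of a smooth morphism produces a flat relative exceptional divisor. Fixing the lift $\widetilde e_i$ amounts to selecting one such exceptional divisor $\mathcal C_i$ in the tower; I would then define $\widetilde{\mathcal E}_i$ by propagating $\mathcal C_i$ through the gluings exactly as in \cite{D4}: in the direction of the pseudo-concave end one takes the successive \emph{total} transforms of $\mathcal C_i$ under the rational maps induced by $F$ (these in general involve all the exceptional curves met along the way, with multiplicities prescribed by the intersection matrix of the configuration), and in the direction of the pseudo-convex end one takes the push-forwards, which land in the curve-free region of Step~1. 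Since every operation used --- relative blow-up, relative total/proper transform, push-forward under a family of birational morphisms --- preserves flatness over $\Delta$, the resulting closed analytic subset $\widetilde{\mathcal E}_i\subset\widetilde{\mathcal X}$ is an effective divisor, locally finite (hence a genuine divisor even though it globally has infinitely many components), and flat over $\Delta$.

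\emph{Step 3: the fibres.} Over a point $z$ such that $X_z$ carries an exceptional curve in the class $e_i$ --- in particular when $X_z$ is a blown-up primary Hopf surface, where $F_z$ is a global linear contraction --- the contractibility of $e_i$ is reflected by a blow-down of the germ, which causes all the total transforms occurring in Step~2 to collapse onto a single curve; thus the fibre $\widetilde E_{i,z}$ of $\widetilde{\mathcal E}_i$ is a lift of $E_i$, proving~(1). Over a point $z$ for which $e_i$ is not effective (e.g.\ $X_z$ minimal) no blow-down occurs, all the total transforms are non-trivial, and the fibre is the infinite locally finite sum $\sum_{k}\sum_j m_{k,j}\,C_j^{(k)}$ running over the copies $\{C_j^{(k)}\}$ of the curve configuration situated towards the pseudo-concave end, with coefficients $m_{k,j}$ obtained from the iterated action of $F^{*}$ on the curve classes; these coincide with the coefficients of \cite{D4}, so this divisor represents the image of $\widetilde e_i$ in $H_2^{\mathrm{BM}}(\widetilde X_z,\Z)$. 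Local finiteness and the escape to the pseudo-concave end follow from the two boundedness phenomena: by the maximum principle, a compact curve cannot lie in the curve-free region of Step~1, so every component of $\widetilde E_{i,z}$ is bounded towards the pseudo-convex end; and the strict contraction property of $F$ pushes the $k$-th batch of components towards the pseudo-concave end as $k$ increases, so that only finitely many components meet any fixed compact set.

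\emph{The main obstacle.} The crux is not the existence of a limit but the exact bookkeeping of multiplicities together with flatness over the \emph{entire} base $\Delta$ --- in particular across the locus where the fibres change from a single $\mathbb P^1$ to an infinite chain. One must show that, as $z$ specializes (in any direction, not only along a disc), the bounded curve $\widetilde E_{i,z}$ ``spreads'' over precisely the configurations $\{C_j^{(k)}\}$ with precisely the coefficients predicted by \cite{D4}, with neither a spurious component nor a jump of the Borel--Moore class. I would attack this locally: near any point of a special fibre only finitely many branches of $\widetilde{\mathcal E}_i$ are present, so the flat limit is computed by the ordinary Remmert--Stein mechanism, and the multiplicity of a given $C_j^{(k)}$ is the intersection number of the closure of the family with the corresponding fibre --- a number continuous in $z$ and equal to the appropriate entry of a power of the matrix of $F^{*}$ on the exceptional curves. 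Verifying that these local computations glue to the global divisor $\widetilde{\mathcal E}_i$ and that the total space is genuinely flat over $\Delta$ (and not merely fibrewise correct) is the technical heart of the argument.
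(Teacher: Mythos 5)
Your construction rests on an assumption the paper never makes and that is not available: that an arbitrary holomorphic family of GSS surfaces over $\Delta$ is ``governed by a holomorphic family of contracting germs $(F_z)_{z\in\Delta}$'', i.e.\ that the Kato data $(\Pi,\sigma)$ of section \ref{GSSsurfaces} can be chosen to vary holomorphically over the whole base, giving a relative tower of blow-ups along sections of a ball bundle. The Kato--Dloussky model is a statement about a single (marked) GSS surface; no result guarantees a relative global spherical shell, or holomorphically varying blow-up centres, for a family — and this is exactly the kind of degeneration the theorem is about, since the blow-up configuration must jump when a blown-up Hopf fiber specializes to a minimal GSS fiber. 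The paper deliberately avoids any such relative model: it only uses a \emph{smooth} family of embedded strictly pseudo-convex $3$-spheres (Theorem \ref{first}), the Zariski-openness of $B_e$ proved in Theorem \ref{Zar} (via Barlet-space compactness, Gauduchon degrees, Lemma \ref{closed} and Rado's theorem), and then extends the lifted family of exceptional divisors across $H_e$ by Remmert--Stein after showing it avoids a fixed translate $f^{-m}(\widetilde{\cal X}^+_U)$ of the pseudo-convex region; flatness comes from Lemma \ref{flatnesslemma}, not from tracking transforms under $F$. Without an actual relative germ, your Steps 1--3 (exceptional divisors of relative blow-ups, total transforms under $F$, the ``collapse'' over points where $e_i$ is effective) have no object to operate on.

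Moreover, what you label ``the main obstacle'' and postpone is precisely the mathematical content of the theorem, and your sketch of how to handle it misses the two ingredients the paper actually needs. First, the identification of the special fiber as the series $\sum_j \alpha^i_j\widetilde C_{i+j}$ is obtained in the paper from flatness (which forces the Borel--Moore class of every fiber to be $\widetilde e_z$, Remark \ref{BM}) together with the \emph{uniqueness} of such a series decomposition in $H_2^{\rm BM}$ (Remark \ref{unicity}, Proposition \ref{series}); no continuity-of-intersection-numbers bookkeeping is required once these are in place. Second, and crucially, to know the limit fiber is a series of \emph{compact} curves at all, one must exclude non-compact irreducible components bounded towards the pseudo-convex end; your maximum-principle remark only constrains where compact curves can sit and does not address this. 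The paper's Lemma \ref{final} does exactly this, by contracting the curves via $q_k:\widetilde X\to\widehat X_k$ and applying Remmert--Stein in a punctured ball. As it stands, your argument neither constructs $\widetilde{\cal E}_i$ over all of $\Delta$ nor establishes statement (2) on the special fibers.
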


Finally, we can generalize 
this result for deformations of an {\it unknown} minimal class VII surface which can be deformed in GSS surfaces:

\begin{Th} \label{thirdTh} Let ${\cal X}\to \Delta$ be a holomorphic family such that  $X_z$ is a GSS surface with $b_2>0$ for any $z\in\Delta\setminus A$, where $A\subset\Delta$ is an analytic subset of codimension $\geq 2$, and $X_a$ is minimal for every $a\in A$. 
Then for any lift $\widetilde e_i$  of a homology class $e_i$ to the universal cover $\widetilde{\cal X}$, there exists   an effective divisor $\widetilde {\cal E}_i\subset \widetilde {\cal X}$ flat over $\Delta$ such that
the fiber $\widetilde E_{i,z}\subset \widetilde X_z$ of $\widetilde {\cal E}_i$ over a point $z\in\Delta$ has the properties:
\begin{enumerate}
\item it represents the image of the class $\widetilde e_i$ in  the Borel-Moore  homology group $H_2^{\rm BM}(X,\Z)$ for every $z\in\Delta$, 
\item  is a lift of the exceptional curve $E_i$ representing $e_i$, for every $z\in\Delta$ for which $X_z$ contains such an exceptional curve.
\end{enumerate}
 \end{Th}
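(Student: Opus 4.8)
The plan is to reduce Theorem~\ref{thirdTh} to Theorem~\ref{secondTh}, by building the divisor over $\Delta^{*}:=\Delta\setminus A$ out of the family of GSS surfaces living there and then extending it across $p^{-1}(A)$. First I would dispose of the topology: since $A$ has complex codimension $\geq 2$ in the ball $\Delta$, the set $\Delta^{*}$ is connected and simply connected; and since ${\cal X}\to\Delta$ is a proper holomorphic submersion, hence (Ehresmann) a locally trivial differentiable fibre bundle even over the points of $A$, one gets $\pi_{1}({\cal X})\cong\pi_{1}(X_{z_{0}})\cong\Z$ for $z_{0}\in\Delta^{*}$ and likewise $\pi_{1}(X_{a})\cong\Z$ for every $a\in A$. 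Hence $\widetilde{\cal X}\to{\cal X}$ is the $\Z$-cover, it restricts over every $a\in\Delta$ to the universal cover $\widetilde X_{a}:=p^{-1}(a)$ of $X_{a}$, and $\widetilde{\cal X}\to\Delta$ is again a differentiable fibre bundle, now with non-compact fibre; in particular the groups $H_{2}(\widetilde X_{z},\Z)$ and $H_{2}^{\rm BM}(\widetilde X_{z},\Z)$ form constant local systems over the contractible $\Delta$, and I fix the lift $\widetilde e_{i}$ as a section of the first one.

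Over $\Delta^{*}$ every fibre is a GSS surface with $b_{2}>0$, so I would cover $\Delta^{*}$ by open sets $\Delta_{\alpha}$ biholomorphic to the standard ball and apply Theorem~\ref{secondTh} to each ${\cal X}|_{\Delta_{\alpha}}\to\Delta_{\alpha}$ together with $\widetilde e_{i}|_{\Delta_{\alpha}}$, obtaining effective divisors $\widetilde{\cal E}_{i}^{\alpha}\subset\widetilde{\cal X}|_{\Delta_{\alpha}}$, flat over $\Delta_{\alpha}$, with the stated fibres. On an overlap the divisors $\widetilde{\cal E}_{i}^{\alpha}$ and $\widetilde{\cal E}_{i}^{\beta}$ are flat families of effective divisors on the surfaces $\widetilde X_{z}$ representing, fibre by fibre, the same Borel--Moore class; invoking the uniqueness of an effective divisor on the universal cover of a GSS surface in a prescribed Borel--Moore class -- the same uniqueness that makes the output of Theorem~\ref{secondTh} canonical -- they agree, and the local pieces glue to an effective divisor $\widetilde{\cal E}_{i}^{*}\subset\widetilde{\cal X}|_{\Delta^{*}}$, flat over $\Delta^{*}$, which already satisfies conclusions (1) and (2) of the theorem at every point of $\Delta^{*}$ (at a point carrying the exceptional curve, compactness of $\widetilde E_{i,z}$ identifies its class with the image of $\widetilde e_{i,z}$ in $H_{2}^{\rm BM}$).

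Next I would extend $\widetilde{\cal E}_{i}^{*}$ across $p^{-1}(A)$, which has complex codimension $\geq 2$ in $\widetilde{\cal X}$, hence codimension strictly larger than that of each irreducible component of the pure codimension one analytic set $|\widetilde{\cal E}_{i}^{*}|$. By the Remmert--Stein extension theorem its closure $|\widetilde{\cal E}_{i}|$ in $\widetilde{\cal X}$ is a closed analytic subset of pure codimension one (in particular, locally of finitely many components); since every component dominates $\Delta$, the multiplicity of $\widetilde{\cal E}_{i}^{*}$ along the corresponding component is unambiguous, and carrying it to the whole component yields an effective divisor $\widetilde{\cal E}_{i}\subset\widetilde{\cal X}$ extending $\widetilde{\cal E}_{i}^{*}$. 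Being a hypersurface in the smooth $\widetilde{\cal X}$, it is Cohen--Macaulay; if one checks that no fibre $\widetilde X_{a}$, $a\in A$, is contained in $|\widetilde{\cal E}_{i}|$, then all scheme-theoretic fibres of $\widetilde{\cal E}_{i}\to\Delta$ are effective divisors on the smooth surfaces $\widetilde X_{z}$, pure of dimension one, and the local flatness criterion by fibre dimension makes $\widetilde{\cal E}_{i}\to\Delta$ flat over the whole of $\Delta$. Conclusion (2) at $a\in A$ is then vacuous, since the minimal surface $X_{a}$ has no effective divisor in the class $e_{i}$ (Nakamura); and conclusion (1) follows because $c_{1}({\cal O}_{\widetilde{\cal X}}(\widetilde{\cal E}_{i}))$, Poincar\'e dual to $[\widetilde{\cal E}_{i}]$, restricts to $c_{1}({\cal O}_{\widetilde X_{a}}(\widetilde E_{i,a}))$ on each fibre, while the restriction map $H^{2}(\widetilde{\cal X},\Z)\to H^{2}(\widetilde X_{a},\Z)$ is, up to the constant-local-system identifications, independent of $a$, hence equals the value $\widetilde e_{i}$ already known over $\Delta^{*}$.

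The two delicate points are the gluing step -- which rests entirely on the uniqueness of an effective divisor on $\widetilde X_{z}$ in a prescribed Borel--Moore class, without which the outputs of Theorem~\ref{secondTh} on different balls need not patch -- and, above all, the behaviour at the points of $A$: one must show that the flat limit over $a$ does not swallow the whole fibre $\widetilde X_{a}$ (equivalently, that $\widetilde{\cal E}_{i}\to\Delta$ stays flat there) and is still a genuine effective divisor carrying the class $\widetilde e_{i}$. Here I would use that $\widetilde{\cal E}_{i}^{*}$ remains bounded towards the pseudo-convex end of the fibres, so that no component can escape through that end in the limit, combined with Nakamura's restriction on the curves of the minimal surface $X_{a}$, which should prevent uncontrolled accumulation at the pseudo-concave end. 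I expect this last point -- that passing to the flat limit over $A$ yields a codimension one divisor with the correct Borel--Moore class -- to be the main obstacle.
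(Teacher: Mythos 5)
Your overall route coincides with the paper's: Theorem \ref{thirdTh} is there obtained as a special case of the second extension theorem (Theorem \ref{second}), whose proof likewise builds the divisor over $\Delta\setminus A$ (directly by applying Theorem \ref{first} to the restricted family, since removing a codimension $\geq 2$ analytic set leaves the base 2-connected -- your covering of $\Delta\setminus A$ by balls and gluing via the uniqueness of Remark \ref{unicity} and Lemma \ref{final} is a workable but unnecessary detour) and then extends it across $\widetilde p^{-1}(A)$ by the second Remmert--Stein theorem. The genuine gap is precisely the step you flag as ``the main obstacle'' and leave unproved: showing that for $a\in A$ the closure does not contain the whole fibre $\widetilde X_a$, i.e.\ that $\widetilde E_{i,a}$ is a true effective divisor of $\widetilde X_a$. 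Without this, Lemma \ref{flatnesslemma} does not apply, flatness over $\Delta$ fails at $a$, and statement (1) collapses with it. Note that the danger is real: since $\dim\Delta\geq 2$, an irreducible component of the Remmert--Stein extension can meet $\widetilde p^{-1}(A)$ in a set of dimension up to $\dim\Delta\geq 2$, so a priori it can swallow an entire fibre $\widetilde X_a$; nothing in your sketch excludes this.

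Moreover, the ingredients you propose for closing the gap do not address the actual difficulty: accumulation towards the pseudo-concave end is expected and harmless (that is exactly where the limiting series escapes), and Nakamura's non-existence of an effective divisor on the compact minimal surface $X_a$ says nothing about the geometry of the closure inside the non-compact cover $\widetilde X_a$. The missing idea, supplied in the proof of Theorem \ref{second}, is the following: one introduces the auxiliary divisor $\widetilde{\cal F}$, the Remmert--Stein closure of $\bigcup_{n\geq 0} f^n(\widetilde{\cal E}_0)$, where $f$ is the deck transformation moving points towards the pseudo-concave ends; one has $f^n(\widetilde{\cal E})\subset\widetilde{\cal F}$ for all $n\geq 0$, and if an irreducible component $\widetilde{\cal C}$ of $\widetilde{\cal E}$ were unbounded towards the pseudo-convex end over a relatively compact neighbourhood $\bar V$ of $a$, the pairwise distinct irreducible divisors $f^n(\widetilde{\cal C})$ would all meet the compact set $\widetilde{\cal X}^0_{\bar V}$, contradicting Lemma \ref{lmm} (a divisor contains only finitely many distinct irreducible divisors meeting a fixed compact set). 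This boundedness towards the pseudo-convex end simultaneously prevents $\widetilde E_{i,a}=\widetilde X_a$ and yields flatness via Lemma \ref{flatnesslemma}; an argument of this kind (or a substitute for it) is indispensable and is absent from your proposal.
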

Unfortunately we did not succeed to prove that the second statement 
of Theorem \ref{secondTh} also holds in this case, so we cannot prove that, for $a\in A$, the limit divisor $\widetilde E_{i,a}$ is given by a series of {\it compact} curves. The difficulty is to rule out  the appearance of an open Riemann surface  in the limit. Such a result would be of crucial 
importance for the classification of class VII surfaces, because it will prove 
the main conjecture {\bf C} for any class VII surface which fits as the central fiber
 of a bidimensional family ${\cal X}\to\Delta\subset\C^2$ whose fibers 
$X_z$, $z\ne 0$ are GSS surfaces (see Theorem \ref{converse}).

\section{The non-minimality divisors}

A compact effective divisor $D$ on a (non-necessary compact) complex surface $X$ will be called {\it simply exceptional}  if there exists a {\it smooth} surface $Y$, a point $y\in Y$,  and a surjective holomorphic map $c:X\to Y$  such that $\resto{c}{X\setminus D}:  X\setminus D\to Y\setminus\{y\}$ is biholomorphic, and the fiber $c^{-1}(y)$  over $y$ is $D$ (regarded as a complex subspace of $X$). If this is the case, the pair $(X,D)$ can be obtained from $Y$ by iterated blowing up at $y$. More precisely  $(X,D)$ is the last term  $(X_n,D_n)$ in a sequence $(X_1,D_1),\dots,(X_n,D_n)$ ($n\geq 1$), where 
\begin{itemize}
\item $X_1$ is the blow up of $Y$ at $y$ and $D_1$ is the corresponding exceptional divisor,   
\item $X_k$ is obtained from $X_{k-1}$ by blowing up at a point $y_{k-1}\in D_{k-1}$ for $2\leq k\leq n$, and $D_k$ is the pre-image of $D_{k-1}$ in $X_k$.
\end{itemize}

  Therefore $D$ is the pre-image in $X_n$ of the exceptional curve of first kind $D_1\subset X_1$, and it is a tree of smooth rational curves containing at least one exceptional curve of the first kind. 
\begin{Rem} For  simply exceptional divisor $D\subset X$ one has 
$$(D,{\cal K}_X)=(D,{\cal N}_{D/X})=-1\ .$$
Using the duality isomorphism 
$PD:H_2(X,\Z)\simeq H^2_{\rm c}(X,\Z)$ we have $$D^2:=\langle PD([D]),[D]\rangle=-1\ .$$
\end{Rem}

Since the arithmetic genus $g_a(D)$ vanishes, one obtains  $h^0({\cal O}_D(D))-h^1({\cal O}_D(D))=0$ by Riemann-Roch theorem for embedded curves (\cite{BHPV}, p. 65). Therefore, since $\deg_D({\cal O}_D(D))=-1<0$, we get
$$h^0({\cal O}_D(D))=h^1({\cal O}_D(D))=0\ ,
$$  
so, denoting by $e\in H_2(X,\Z)$ the homology class represented by $D$, it follows that the Douady space ${\cal D}ou^e(X)$ of compact effective divisors representing this class  is 0-dimensional at $D$, and  $D$ is a smooth point in this space (see \cite{BF} p. 135).
\begin{Rem} \label{unicity-div} Let $X$ be a compact surface and $D$, $E$ simple exceptional divisors representing the same rational homology class $e\in H_2(X,\Q)$. Then $D=E$. 
\end{Rem} 
\begin{proof}  $(X,D)$ is the last term $(X_n,D_n)$ of a sequence of pairs $(X_1,D_1),\dots,(X_n,D_n)$ obtained from $(Y,y)$ as above. We prove the statement by induction on $n$. If $n=1$, $D$ is an exceptional curve of the first kind. Since $DE=-1$, it follows that $D$ is one of the irreducible components of $E$.   But the $\Q$-homology classes of the irreducible components of a simply exceptional divisor are linearly independent, so one can have $[D]_{\Q}=[E]_{\Q}$ if and only if $D=E$.

If $n>1$, let $D'$ be the exceptional curve of the first kind of the blowing up $X_n\to X_{n-1}$. One has obviously $D' D=0$, so $D' E=0$. Therefore either $D'$ is an irreducible component of $E$, or $D'\cap E=\emptyset$. In both cases the direct images $(p_n)_*(D)$, $(p_n)_*(E)$ via the contraction map $p_n:X_n\to X_{n-1}$ are simply exceptional divisor in the rational homology class $(p_n)_*(e)$, so they coincide by induction assumption. On the other hand one can write $D=p_n^*((p_n)_*(D))$, $E=p_n^*((p_n)_*(E))$, so $D=E$.
\end{proof}
The condition  "$X$ admits a  simply exceptional effective divisor $D$" is open with respect to deformations of $X$. More precisely: 
\begin{Prop}\label{open}
Let   $p:{\cal X}\to B$ be a holomorphic submersion with $\dim({\cal X})=\dim(B)+2$.  Let $D_0\subset  X_{b_0}$ be a simply exceptional divisor, and $e\in H_2({\cal X},\Z)$ its homology class in ${\cal X}$.   There exists a neighborhood ${\cal U}$ of $D_0$ in ${\cal D}ou^e({\cal X})$ such that 
\begin{enumerate}
\item \label{1} for every $D\in {\cal U}$, $p(D)$ is a singleton denoted $\{\mathfrak{p}(D)\}$ in $B$  and $D$ is a tree of rational curves in $X_{\mathfrak{p}(D)}$,  
\item \label{2}the induced map $\mathfrak{p}:{\cal U}\to B$ is a local biholomorphism at $D_0$,
\item $D$ is simply exceptional in $X_{\mathfrak{p}(D)}$ for every $D\in {\cal U}$
\end{enumerate}
\end{Prop}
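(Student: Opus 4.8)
The plan is to deduce everything from the deformation-theoretic facts already established in this section, namely that a simply exceptional divisor $D_0$ is a smooth, isolated point of the Douady space of its homology class on the single surface $X_{b_0}$, together with the explicit birational structure of $(X,D)$ as an iterated blow-up. First I would work in the relative Douady space ${\cal D}ou^e_B({\cal X})$ (the fibrewise version of ${\cal D}ou^e$, with its projection to $B$): I claim the natural map ${\cal D}ou^e_B({\cal X})\to B$ is a local isomorphism near $D_0$. The obstruction and deformation spaces for a compact divisor $D$ in a surface $S$ are $H^1({\cal O}_D(D))$ and $H^0({\cal O}_D(D))$; by the Remark preceding the Proposition both vanish for a simply exceptional divisor. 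Hence the relative Douady space is unobstructed and the forgetful map to $B$ has vanishing relative tangent and obstruction spaces at $D_0$, so it is étale there. Shrinking, we get a neighborhood ${\cal U}$ of $D_0$ in ${\cal D}ou^e_B({\cal X})={\cal D}ou^e({\cal X})$ (the two agree because $p$ is proper, so any connected compact cycle lies in a single fiber) mapping biholomorphically onto a neighborhood of $b_0$ in $B$. This gives statement \eqref{1} (the cycle-theoretic fact that $p(D)$ is a point) and statement \eqref{2} simultaneously.

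Next I would establish that every member $D\in{\cal U}$ is still a tree of smooth rational curves and is simply exceptional in its fiber $X_{\mathfrak{p}(D)}$. The key point is that being a simply exceptional divisor is characterized combinatorially: $(X,D)$ is obtained from a smooth $(Y,y)$ by an iterated blow-up, so $D$ is a union of smooth rational curves whose dual graph is a tree with a prescribed weighted structure, and Grauert's contractibility criterion says precisely when such a configuration contracts to a smooth point. Since the homology class $e$ and hence the intersection-theoretic data of the components is locally constant in the family, and since the Douady space is $0$-dimensional along ${\cal U}$, each component of $D$ stays rigid; I would argue that the type of the configuration (the dual graph with weights) is constant on the connected set ${\cal U}$. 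Concretely: near $D_0$ the universal family over ${\cal U}$ restricts fibrewise to a flat deformation of the reduced tree $D_0$ inside the $2$-dimensional fibers; flatness plus constancy of arithmetic genus ($g_a=0$) plus $h^0({\cal O}_D(D))=0$ forces the nearby $D$ to be again a reduced tree of smooth rational curves with $D^2=-1$, and the self-intersections of the individual components are locally constant. Then by iterated application of Castelnuovo's contraction criterion along the tree — exactly as in the construction of $(X_n,D_n)$ from $(Y,y)$ but run in reverse and relatively over ${\cal U}$ — one contracts the exceptional curve of the first kind, then the next, and so on, obtaining a smooth surface $Y_{\mathfrak{p}(D)}$ and a point; this exhibits $D$ as simply exceptional, giving statement (3).

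I would organize the contraction step relatively: form the universal divisor ${\cal D}\subset{\cal X}\times_B{\cal U}$, identify within it the sub-family of exceptional curves of the first kind (these form a flat sub-family because their self-intersection $-1$ is preserved), and apply the relative version of Castelnuovo's criterion (Fujiki–Nakano, or simply fibrewise plus a flatness/base-change argument since ${\cal U}\to B$ is étale) to contract it to a smooth relative divisor over ${\cal U}$; iterating $n$ times terminates with a submersion whose fibers are smooth surfaces and a section picking out the point $y_{\mathfrak{p}(D)}$. The main obstacle I anticipate is precisely this last ingredient — verifying that the contraction can be performed \emph{in family} and that the resulting space is again smooth over the base — since Castelnuovo–Grauert contraction is a priori a statement about a single surface; the clean way around it is to use the étaleness of ${\cal U}\to B$ established in the first paragraph, which lets one shrink so that the family over ${\cal U}$ is, up to the base change $\mathfrak{p}$, the original family ${\cal X}\to B$ restricted to a small neighborhood, and then a standard result on simultaneous contraction of $(-1)$-curves in a smooth family (or an explicit check that the relative Douady/Hilbert scheme of the contracted class is again smooth over the base) finishes the argument. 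Everything else — reducedness, the tree structure, $D^2=-1$, rationality of components — follows formally from the vanishing $h^0({\cal O}_D(D))=h^1({\cal O}_D(D))=0$ and semicontinuity, without any further computation.
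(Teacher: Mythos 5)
Your overall strategy coincides with the paper's: the vanishing $h^0({\cal O}_{D_0}(D_0))=h^1({\cal O}_{D_0}(D_0))=0$ (together with $h^1({\cal O}_{D_0})=0$) makes the Douady space smooth at $D_0$ with tangent space $T_{b_0}(B)$, so $\mathfrak{p}$ is a local biholomorphism, and statement (3) is obtained by contracting the $(-1)$-curve component simultaneously over the base (Kodaira stability plus fiberwise blow-down) and iterating. The paper organizes the iteration as an induction on the number of irreducible components, pushing the divisor forward under the relative contraction ${\cal X}_U\to{\cal Y}$ and invoking a flatness lemma; this is the same idea as your ``relative Castelnuovo'' step, so on this point the two arguments differ only in presentation.

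Two specific steps, however, are wrong as written. First, your justification of statement (1) --- ``$p$ is proper, so any connected compact cycle lies in a single fiber'' --- is not a valid implication: properness of $p$ does not prevent a compact curve in ${\cal X}$ from dominating a compact curve in $B$. What is true, and what the paper uses, is that one may shrink to a chart domain $V\subset B$, which contains no positive-dimensional compact analytic subsets, so $p(D)$ is a finite set of points for $D$ close to $D_0$; connectedness of $D$ (a small flat deformation of a tree of rational curves is again such a tree) then makes $p(D)$ a singleton. You need this before you can even define the map to $B$ or identify the relative with the absolute Douady space, so it is not a formality. Second, your repeated claim that $D_0$ and the nearby $D$ are \emph{reduced} trees of smooth rational curves is false in general: a simply exceptional divisor is the total transform $c^{-1}(y)$ of the first exceptional curve, and as soon as one blows up an intersection point of the exceptional configuration this total transform acquires components of multiplicity $\ge 2$. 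This does not ruin your plan --- one contracts the $(-1)$-curve component whatever its multiplicity in $D$ --- but the bookkeeping must treat $D$ as a divisor with multiplicities; the paper does so by removing the component ${\cal D}_m$ when necessary, pushing forward to ${\cal D}''$, and recovering $D_b$ as the pull-back $\pi_b^*(D''_b)$, so reducedness is never used. With these two repairs your argument becomes essentially the paper's proof.
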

\begin{proof} Let $V\subset B$ an open neighborhood of $b_0$ which is   the domain of a chart, and ${\cal V}\subset{\cal D}ou^e({\cal X}) $ the neighborhood of $D_0$ in ${\cal D}ou^e({\cal X})$  consisting of 1-dimensional complex subspaces of ${\cal X}$ which represent  the class $e$ and are contained in $p^{-1}(V)$.  

For every $D\in {\cal V}$ the projection $p(D)\subset V$ is a finite union of points. On the other hand it's easy to see that any small flat deformation of a tree of smooth rational curves is again a tree of smooth rational curves, in particular it is connected. Therefore, replacing ${\cal V}$ by a smaller neighborhood if necessary, the conclusion (\ref{1}) will be fulfilled.\\

The normal bundle ${\cal N}_{\cal X}(D_0)$ of $D_0$ in ${\cal X}$  fits in a short exact sequence on $D_0$:
$$0\to {\cal N}_{X_{b_0}}(D_0)={\cal O}_{D_0}(D_0) \to {\cal N}_{\cal X}(D_0) \to D_0\times T_{b_0}(B)\to 0\ .
$$
Since $h^1({\cal O}_{D_0}(D_0))=h^1({\cal O}_{D_0})=0$  we obtain 
$$H^0({\cal N}_{\cal X}(D_0))=T_{b_0}(B)\ ,\ H^1({\cal N}_{\cal X}(D_0))=\{0\}\ .$$
 This shows that ${\cal D}ou^e({\cal X})$ is smooth at 
$D_0$ and the projection on $B$ defined by $\mathfrak{p}$ is a local biholomorphism at $D_0$. This proves (\ref{2}).  Let ${\cal U}\subset{\cal V}$ be an open neighborhood of $D_0$ on which $\mathfrak{p}$ is biholomorphic, and put $U:=\mathfrak{p}({\cal U})\subset B$. We obtain a holomorphic family of trees of rational curves $(D_b)_{b\in U}$ with $D_{b_0}=D_0$.\\

The third statement can be proved by induction with respect to the number $m$ of irreducible components of $D_0$. Consider an irreducible component $D_m$ of $D_0$ which is an exceptional curve of the first kind. $D_m$ is itself a simply exceptional divisor, so the statements (\ref{1}),   (\ref{2})  apply to $D_m$, and, taking ${\cal U}$ and $U$ sufficiently small, we obtain a holomorphic family 
$$(D_{m,b})_{b\in U}\ ,\ D_{m,b}\subset X_b$$
 of simply exceptional divisors.  But, by the stability theorem of Kodaira,   any small deformation of an exceptional curve of the first kind is again an exceptional curve of the first kind, so we may assume that all $D_{m,b}$ are all exceptional curves of the first kind.  This proves the statement when $m=1$. Suppose $m>1$. Contracting in $X_b$ the curve  $D_{m,b}$ to a point $y_b$ for every $b\in B$, we obtain a commutative diagram
$$
\begin{array}{lcc}
{\cal X}_U\hspace{-5mm}&\stackrel{\pi}{\longrightarrow}&\hspace{-5mm} {\cal Y}\\
&p_U\ \hspace{-2mm}\searrow  \ \  \ \ \swarrow \hspace{-1mm}s&\\
&\hspace{3mm}U&
\end{array}
$$
where $\pi$ is proper and bimeromorphic whose restriction to any fiber $X_b$ is just the blowing down map  $\pi_b:X_b\to Y_b$   which contracts  $D_{m,b}\subset X_b$  to   $y_b$. Let $\mathcal{D}$, $\mathcal{D}_m\subset \mathcal{X}_U$ be the classifying divisors  of the families $(D_b)_{b\in U}$, $(D_{m,b})_{b\in U}$. Supposing $U$ connected, we see that $\mathcal{D}_m$ is irreducible. One has $(\mathcal{D}\cap \mathcal{D}_m)\cap X_{b_0}=D_{m,b_0}$, so (since we supposed $m>1$) one has ${\cal D}\ne {\cal D}_m$.
Let ${\cal D}'$ be the divisor which coincides with ${\cal D}$ if  ${\cal D}_m\not \subset \mathcal{D}$ and is obtained from $\mathcal{D}$ by removing the ${\cal D}_m$ if ${\cal D}_m\subset \mathcal{D}$. The projection on ${\cal Y}$ of every irreducible component of ${\cal D}'$ is   reduced and irreducible of codimension 1, so ${\cal D}''=\pi({\cal D}')$ is an effective divisor of ${\cal Y}$. Any fiber  $D_{b}''$ of this divisor over a point $b\in U$ is an effective   divisor of $Y_b$ (more precisely  a tree of rational curves in $Y_b$) so  $\mathcal{D}''$ is flat over $U$ by Lemma \ref{flatnesslemma} below.  By induction assumption all 
 fibers $D''_b$ are simply exceptional divisors.    Note now that $D_b$ is the pre-image   $\pi_b^*(D''_b)$  of $D''_b$ in $X_b$.
Therefore $D_b$ will be simply exceptional, too.
\end{proof}
\begin{Lem} \label{flatnesslemma} Let $p:{\cal X}\to B$ be a submersion of  complex manifolds with  $\dim({\cal X})=n$, $\dim(B)=m$. Let ${\cal D}\subset {\cal X}$ be an effective divisor such that ${\cal D}\cap X_b$ is a (possibly empty) effective divisor of the fiber $X_b$ for every $b\in B$. Then ${\cal D}$ is flat over $B$.
\end{Lem}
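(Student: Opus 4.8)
The plan is to reduce the statement to a purely local algebraic fact and then invoke the local criterion for flatness. Flatness of ${\cal D}$ over $B$ is a stalkwise condition, so I would fix a point $x\in{\cal D}$ (at points of ${\cal X}\setminus{\cal D}$ the structure sheaf of ${\cal D}$ vanishes, so there is nothing to check), put $b:=p(x)$, and work with the local rings $A:={\cal O}_{B,b}$ and $R:={\cal O}_{{\cal X},x}$ and the local homomorphism $A\to R$ induced by $p$; the goal becomes to show that ${\cal O}_{{\cal D},x}$ is a flat $A$-module. Since ${\cal X}$ is a manifold, $R$ is a ring of convergent power series in $n$ variables, hence a Noetherian integral domain (in fact a UFD), so the effective divisor ${\cal D}$ is cut out near $x$ by a single holomorphic function $f\in R$; moreover $f\neq 0$ because ${\cal D}$ has codimension one, and, $R$ being a domain, multiplication by $f$ is injective on $R$. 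This produces the short exact sequence of $R$-modules $0\to R\xrightarrow{\,\cdot f\,}R\to{\cal O}_{{\cal D},x}\to 0$, which will be the engine of the argument.

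Next I would use that a holomorphic submersion is a flat morphism (locally it is a linear projection $\C^m\times\C^{n-m}\to\C^m$), so $R$ is $A$-flat. Tensoring the above sequence over $A$ with the residue field $k:=A/\mathfrak{m}_b$ and using the identification $R\otimes_A k={\cal O}_{X_b,x}$, the long exact $\mathrm{Tor}$-sequence together with $\mathrm{Tor}_1^A(R,k)=0$ identifies $\mathrm{Tor}_1^A({\cal O}_{{\cal D},x},k)$ with the kernel of multiplication by $\bar f:=f|_{X_b}$ on ${\cal O}_{X_b,x}$. At this point the hypothesis enters decisively: ${\cal O}_{X_b,x}$ is again a convergent power series ring, hence a domain, and $\bar f\neq 0$ precisely because ${\cal D}\cap X_b$ is an effective divisor of $X_b$ — in particular it does not contain the germ of the (locally irreducible) fibre $X_b$ at $x$. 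Hence that kernel is zero, i.e. $\mathrm{Tor}_1^A({\cal O}_{{\cal D},x},k)=0$. Since $A$ is Noetherian local, $R$ is a Noetherian local $A$-algebra, and ${\cal O}_{{\cal D},x}=R/(f)$ is finitely generated (indeed cyclic) over $R$, the local criterion for flatness applies (a finitely generated module $M$ over a Noetherian local $A$-algebra is $A$-flat once $\mathrm{Tor}_1^A(M,k)=0$), and it yields that ${\cal O}_{{\cal D},x}$ is $A$-flat. As $x$ was arbitrary, ${\cal D}$ is flat over $B$.

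The argument is essentially routine; the only points that require care are the correct identification of $\mathrm{Tor}_1^A({\cal O}_{{\cal D},x},k)$ with the ``fibre-wise'' kernel of $\bar f$ — which is what lets the divisor hypothesis on ${\cal D}\cap X_b$ be used — and checking that the local flatness criterion is applicable here, where the module is finite over the \emph{source} $R$ rather than over the base $A$. I would also record the alternative, slightly slicker route via ``miracle flatness'': ${\cal D}$ is a hypersurface in the smooth, hence Cohen--Macaulay, space ${\cal X}$, so ${\cal D}$ is Cohen--Macaulay of pure dimension $n-1$; $B$ is regular; and the hypothesis says exactly that every fibre ${\cal D}\cap X_b$ has the expected dimension $(n-1)-m$ at each of its points; the criterion of flatness by fibre dimension then gives the conclusion directly.
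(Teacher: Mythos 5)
Your argument is correct, but it follows a genuinely different route from the paper's. You argue stalkwise: writing ${\cal D}=Z(f)$ near $x$, using the exact sequence $0\to R\xrightarrow{\,\cdot f\,}R\to{\cal O}_{{\cal D},x}\to 0$, the $A$-flatness of $R$ (submersion), and the resulting identification of $\mathrm{Tor}_1^A({\cal O}_{{\cal D},x},k)$ with the kernel of multiplication by $\bar f$ on ${\cal O}_{X_b,x}$, which vanishes because the hypothesis that ${\cal D}\cap X_b$ is a divisor forbids $f$ from vanishing identically on the smooth (hence locally irreducible) fibre germ; the local criterion of flatness, in its standard form for a module finite over the Noetherian local $A$-algebra $R$, then concludes. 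The paper instead introduces the auxiliary map $\psi=(p,\phi):U\to B\times\C$, notes that its fibre through $x\in{\cal D}$ is ${\cal D}\cap U\cap X_{p(x)}$, of the minimal possible dimension $n-m-1$, uses upper semicontinuity of fibre dimension to get constant fibre dimension on a smaller $U$, applies the flatness-by-fibre-dimension criterion from Fischer's book to conclude that $\psi$ is flat, and finally base-changes along $B\ni b\mapsto(b,0)\in B\times\C$ to obtain flatness of ${\cal D}\cap U$ over $B$. Your closing remark about ``miracle flatness'' applied directly to ${\cal D}\to B$ (a hypersurface in a manifold is Cohen--Macaulay, $B$ is regular, and the fibres have the expected dimension) is essentially the paper's argument with the auxiliary map and the base change stripped away. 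What your Tor argument buys is self-containedness: only the domain property of analytic local rings and the standard local flatness criterion are needed, and the divisor hypothesis enters transparently as $\bar f\neq 0$; what the paper's (and your alternative) dimension-theoretic route buys is brevity, at the cost of invoking the analytic flatness-by-fibre-dimension criterion.
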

\begin{proof} Let $x\in {\cal D}$ and let $\phi:U\to\C$ be a local equation for ${\cal D}$ around $x$, i.e. $U$ is an open  neighborhood of $x$ in ${\cal X}$ and ${\cal D}\cap U=Z(\phi)$. Consider the map $\psi:=(p,\phi):U\to B\times\C$, and note that the fiber $\psi^{-1}(\psi(x))$ of this map can be identified with ${\cal D}\cap U\cap X_{p(x)}$ so the dimension at $x$ of this fiber has minimal value $n-m-1$. By the fiber dimension semicontinuity theorem we see that, replacing $U$ with a smaller open neighborhood if necessary, we may assume that 
$\dim_{\xi}\ \psi^{-1}(\psi(\xi))=n-m-1$ for every $\xi\in U$. Therefore $\psi$ will be flat by the flatness criterion given by the Corollary on p. 158 in \cite{Fi}.  Now we apply the base change property of the flat morphisms to the base change $B\ni b\mapsto (b,0)\in B\times\C$ and we get that ${\cal D}\cap U$ is flat over $B$.
\end{proof}
For a    holomorphic family ${\cal X}\to B$ of compact complex surfaces we will  denote by $k=(k_b)_{b\in B}\in H^0(B,\underline{H}^2)$ the section defined by the classes 
$k_b:=c_1({\cal K}_{X_b})$, $b\in B$.
\begin{Cor} Let ${\cal X}\to B$ be a    holomorphic family of compact complex surfaces, and let $e=(e_b)_{b\in B}\in H^0(B,\underline{H}_2)$ such that $\langle e,k\rangle=-1$ and $e\cdot e=-1$ with respect to the intersection form induced from any $H_2(X_b,\Z)$. Then the subset
$$B_e:=\{b\in B|\ X_b \hbox{ admits a simply exceptional divisor representing the class }e_b  \}\subset B
$$
is open with respect to the classical topology.
\end{Cor}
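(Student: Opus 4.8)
The plan is to deduce the corollary directly from Proposition~\ref{open}, the only genuine work being the passage from a section of the coefficient system $\underline{H}_2$ to an honest integral homology class. There is nothing to prove when $B_e=\emptyset$, so I fix a point $b_0\in B_e$ together with a simply exceptional divisor $D_0\subset X_{b_0}$ representing the class $e_{b_0}$. (By the first Remark of this section the numerical hypotheses $\langle e,k\rangle=-1$ and $e\cdot e=-1$ are then automatically fulfilled; they are recorded in the statement for completeness but play no role in the argument.)

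First I would shrink $B$ to a contractible coordinate ball $V\ni b_0$, small enough that by Ehresmann's fibration theorem the restricted family $p^{-1}(V)\to V$ is $C^\infty$-trivial. Then for every $b\in V$ the inclusion $X_b\hookrightarrow p^{-1}(V)$ is a homotopy equivalence and induces an isomorphism $H_2(X_b,\Z)\to H_2(p^{-1}(V),\Z)$. Hence the section $e|_V$ of $\underline{H}_2$ over $V$ corresponds to a well defined class $\bar e\in H_2(p^{-1}(V),\Z)$ whose restriction to each fibre $X_b$ ($b\in V$) is $e_b$; in particular the homology class of $D_0$ in $p^{-1}(V)$ is $\bar e$. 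This identification is the only subtle point, and it is what forces the argument to be run over a simply connected base; but it costs nothing, since openness is a local property.

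Next I would apply Proposition~\ref{open} to the submersion $p^{-1}(V)\to V$, to the simply exceptional divisor $D_0\subset X_{b_0}$, and to the class $\bar e\in H_2(p^{-1}(V),\Z)$. This produces a neighbourhood ${\cal U}$ of $D_0$ in ${\cal D}ou^{\bar e}(p^{-1}(V))$ such that the induced map $\mathfrak{p}:{\cal U}\to V$ is a local biholomorphism at $D_0$ and every member $D\in{\cal U}$ is a simply exceptional divisor in the fibre $X_{\mathfrak{p}(D)}$. After shrinking ${\cal U}$ I may assume that $\mathfrak{p}|_{\cal U}$ is a biholomorphism onto an open set $U:=\mathfrak{p}({\cal U})\subset V$ containing $b_0$.

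It then only remains to observe that for every $b\in U$ the fibre $D_b:=(\mathfrak{p}|_{\cal U})^{-1}(b)$ is a simply exceptional divisor of $X_b$ whose class in $H_2(p^{-1}(V),\Z)$ equals $\bar e$, hence whose class in $H_2(X_b,\Z)$ equals $e_b$; thus $b\in B_e$, so $U\subset B_e$ is an open neighbourhood of $b_0$, and, $b_0\in B_e$ being arbitrary, $B_e$ is open in the classical topology. I do not expect a genuine obstacle here: once Proposition~\ref{open} is in hand the argument is essentially bookkeeping, the only delicate item being the identification of $e|_V$ with the class $\bar e$ performed in the second step.
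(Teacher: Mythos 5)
Your argument is correct and is essentially the argument the paper intends: the Corollary is stated without proof as an immediate consequence of Proposition~\ref{open}, openness being local, and your write-up just makes explicit the (harmless) bookkeeping of restricting to a contractible neighbourhood so that the section $e$ becomes an honest class $\bar e\in H_2(p^{-1}(V),\Z)$ to which Proposition~\ref{open} applies. Your side remark that the numerical hypotheses are automatic at points of $B_e$ (by the first Remark of the section) and are not needed for openness is also accurate.
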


\begin{Lem} \label{NumExcept} Let $S$ be a  class VII surface and let $C$ be an effective divisor in $X$  with 
$$  C^2 = (C,  {\cal K}_S)=-1\ .$$
 Then $C=E+ \G$, where $E$ is a simply exceptional divisor, $p\ge 0$, and $\G=f^\star \G'$  is the pre-image of a homologically trivial effective divisor $\G'\subset S'$  via the contraction   $f:S\to S'$  to the minimal model.
 \end{Lem}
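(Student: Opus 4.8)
The plan is to work with the contraction $f:S\to S'$ to the minimal model, which factors as a finite sequence of blow-downs $S=S_N\to S_{N-1}\to\cdots\to S_0=S'$. First I would record what the hypotheses give us: $S'$ is a minimal class VII surface, so by Nakamura's lemma (cited in the introduction) it admits no effective divisor in any class $e_i$; moreover on a minimal surface $C'^2<0$ already forces $C'$ to contain a curve of negative self-intersection, and on a minimal class VII surface the only such curves are the (finitely many) rational curves coming from a cycle or tree configuration. The idea is to push $C$ down step by step and at each stage peel off an exceptional curve of the first kind from the ``exceptional part'' of $C$.

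Second, I would argue that $C$ necessarily contains an exceptional curve of the first kind $E_1$ among its components. Write $f=g\circ\pi$ where $\pi:S\to \hat S$ contracts a single $(-1)$-curve $\hat E$. If $\hat E$ were not a component of $C$, then $C$ would be the pullback $\pi^*C_1$ of an effective divisor $C_1$ on $\hat S$ with the same numerical invariants $C_1^2=(C_1,{\cal K}_{\hat S})=-1$ (using $\pi^*{\cal K}_{\hat S}={\cal K}_S-\hat E$, $\pi^*C_1\cdot\hat E=0$, and the projection formula); one then proceeds by induction on $N$. So we may assume $\hat E$ is a component of $C$. Writing $C=a\hat E+C'$ with $\hat E\not\subset\mathrm{supp}(C')$, the conditions $C^2=(C,{\cal K}_S)=-1$ together with $\hat E^2=(\hat E,{\cal K}_S)=-1$, $\hat E\cdot{\cal K}_S=-1$ give two linear relations; combined with $C'\cdot\hat E\ge 0$ (effective, no common component) and a positivity/parity argument one extracts $a=1$ and that after contracting $\hat E$ the image $\pi_*(C-\hat E)=\pi_*C'$ again satisfies the hypotheses of the lemma on $\hat S$. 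Here the delicate point is to keep the class-VII hypothesis propagating and to be careful that $C'$ may meet $\hat E$ in several points, so contracting $\hat E$ can create a singular point of $C'$'s image or merge components — this is where one must invoke the structure of simply exceptional divisors (Remark before \ref{unicity-div}) to conclude that the accumulated exceptional part is a tree containing at least one $(-1)$-curve, i.e.\ is simply exceptional.

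Third, the induction terminates on $S_0=S'$ with a residual effective divisor $\Gamma'$ on $S'$ satisfying $\Gamma'^2=(\Gamma',{\cal K}_{S'})=0$. On a minimal class VII surface I claim such a $\Gamma'$ is homologically trivial: since $b^+_2(S')=0$ the intersection form is negative definite, so $\Gamma'^2=0$ forces $[\Gamma']=0$ in $H_2(S',\Q)$ unless $\Gamma'=0$; and $[\Gamma']$ being a nonnegative combination of effective classes with zero self-intersection in a negative definite lattice must vanish. This is the cleanest step. Reassembling the blow-downs, $C=E+\Gamma$ with $E$ the simply exceptional divisor accumulated from the successively contracted $(-1)$-curves and their total transforms, and $\Gamma=f^*\Gamma'$ the total transform of the homologically trivial $\Gamma'$, since at each stage the part of $C$ not involved in the contraction pulls back as a total transform.

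The main obstacle I expect is the bookkeeping in the inductive step: ensuring that at each blow-down the residual divisor genuinely still has the numerical profile $D^2=(D,{\cal K})=-1$ (rather than, say, acquiring extra $(-1)$ summands in a way that breaks the ``exactly one simply exceptional piece'' conclusion), and handling the case where the contracted $(-1)$-curve meets the rest of $C$ — one must check the image is still reduced where needed and that the total transform identity $C = \pi^*\pi_*C$ holds on the nose for the non-exceptional part. Controlling multiplicities via the two intersection-number equations, plus the fact that on $S$ (a class VII surface, hence $b^+_2=0$) the form is negative definite so effective classes with $C^2=-1$ are very constrained, should make this tractable.
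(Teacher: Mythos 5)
Your overall strategy (descend to the minimal model by successive blow-downs, use negative definiteness, end with a numerically trivial residue) is in the right spirit, but the inductive step as you state it contains claims that are false in general, and what is missing is precisely the mechanism the paper uses. First, the multiplicity $a$ of $\hat E$ in $C$ need not be $1$: already for a simply exceptional divisor obtained by three blow-ups where the third centre is the intersection point of the two previous exceptional curves, the last $(-1)$-curve occurs in $C$ with multiplicity $2$; or take $S$ a Hopf surface with elliptic curve $D$ blown up at a point of $D$ and $C=E_p+f^*(nD)=(n+1)E_p+n\hat D$, where $a=n+1$. Second, in that same example $\pi_*(C-a\hat E)=nD$ has numerical profile $(0,0)$, not $(-1,-1)$, so your claim that the pushforward ``again satisfies the hypotheses of the lemma'' fails: the profile is preserved only when the contracted $(-1)$-curve is orthogonal to $C$. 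The paper's proof runs the induction on the total multiplicity $\sigma_C=\sum_i n_i$, locates a $(-1)$-curve $C_j$ \emph{among the components of $C$} (from $(C_j,\mathcal{K}_S)<0$ plus definiteness), and then splits into the dichotomy your sketch lacks: either $[C]_{\mathbb{Q}}=[C_j]_{\mathbb{Q}}$, in which case $C-C_j$ is $\mathbb{Q}$-homologically trivial and one argues it is a pullback from the minimal model; or not, and then negative definiteness gives $|C\cdot C_j|\le \frac{1}{2}(-C^2-C_j^2)=1$ with equality only if the real classes are proportional, which $(C,\mathcal{K}_S)=(C_j,\mathcal{K}_S)=-1$ rules out, hence $C\cdot C_j=0$; this orthogonality is exactly what yields $C=g^*(g_*C)$ and the preservation of $C^2=(C,\mathcal{K})=-1$ under contraction. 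Note also that in your branch ``$\hat E$ not a component of $C$'' the identity $C=\pi^*\pi_*C$ requires $C\cdot\hat E=0$, which needs this same inequality argument and is only asserted in your plan.

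There is a second gap at the terminal step: your argument only gives $[\Gamma']=0$ in $H_2(S',\mathbb{Q})$, whereas the lemma asserts homological triviality over $\mathbb{Z}$, and negative definiteness says nothing about torsion classes. The paper upgrades $\mathbb{Q}$-triviality to $\mathbb{Z}$-triviality by invoking Enoki's classification of minimal class VII surfaces carrying numerically trivial effective divisors (elliptic Hopf surfaces, Hopf surfaces with elliptic curves, Enoki surfaces), on which every $\mathbb{Q}$-homologically trivial effective divisor is in fact homologically trivial. Finally, the case where the minimal model is an Inoue surface with $b_2=0$ (which carries no curves at all) should be dispatched separately at the outset, as the paper does.
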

\begin{proof} If the minimal model of $S$ is an Inoue surface $S'$ with $b_2(S')=0$, the result is clear, because an Inoue surface with $b_2=0$ does not contain any curve. Therefore we shall suppose that the minimal model is a Hopf surface or a minimal class VII surface with $b_2>0$.   Let $C=\sum_{i=1}^m n_i C_i$ ($n_i>0$) be the  decomposition of $C$ as linear combination of irreducible components. 
We prove the result by induction on the sum $\sigma_C:=\sum_{i=1}^m n_i\ge 1$. 
If $\sigma_C=1$, then $C=C_1$ is irreducible with  $C_1^2=(C,\mathcal{K}_S)=-1$, so it is an exceptional curve of the first kind  by Proposition 2.2 in \cite{BHPV}. 

Suppose now $\sigma_C\ge 2$. We have
$$-1=(C,{\cal K}_S)=\sum_i n_i (C_i,{\cal K}_S)$$
hence there exists an index $j$ such that $(C_j,{\cal K}_S)<0$. Since the intersection form of a class VII surface is negative definite, one has $C_j^2\leq 0$, with equality if and only if $C_j$ is homologically trivial. This would contradict $(C_j,{\cal K}_S)<0$, so necessary $C_j^2<0$.  By Proposition 2.2 in \cite{BHPV} again it follows that $C_j$ is an exceptional curve of the first kind.
\vspace{2mm}\\
{\it Case 1.} The homology classes $[C]_\mathbb{Q}$,  $[C_j]_\mathbb{Q}$ coincide in $H_2(S,\mathbb{Q})$. In this case 
 the effective divisor $\G:=C-C_j$ is $\mathbb{Q}$-homologically trivial. Therefore the effective divisor  $\G':=f_*(\G)\subset S'$ is $\mathbb{Q}$-homologically trivial, too. We can write $\G=f^*(\G')+F$, where $F$ is an effective divisor contained in the exceptional divisor of the contraction map $f$. But $F$ must be also $\mathbb{Q}$-homologically trivial, so it is empty. This shows that $\G=f^*(\G')$, where $\G'$ is a $\mathbb{Q}$-homologically trivial effective divisor  in the minimal model $S'$.  But, using Enoki's theorem concerning the classification of class VII surfaces admitting a numerically trivial divisor, it follows that on such a surface any $\mathbb{Q}$-homologically trivial effective divisor is homologically trivial.
\vspace{2mm}\\
{\it Case 2.} The homology classes $[C]_\mathbb{Q}$, $[C_j]_\mathbb{Q}$ are different. We contract the exceptional curve $C_j$ obtaining a blowing down map $g:S\to S_0$. The homology group $H_2(S,\Z)$ has a decomposition 
$$H_2(S,\Z)=\Z[C_j]\oplus H^\bot\ ,
$$
which is orthogonal with respect to the intersection form $q_S$ of $S$, and $g_*$ induces an isomorphism
\begin{equation}\label{iso}(H^\bot, \resto{q_S}{H^\bot\times H^\bot})\textmap{\simeq \resto{g_*}{H^\bot}}  (H_2(S_0,\Z), q_{S_0})
\end{equation}
 
Since the rational intersection form $q_S^{\R}$ is negative definite, we have
\begin{equation}\label{ineq}0\leq| C C_j|= \left |q_S^\R([C]_\R, [C_j]_\R)\right|\leq \frac{1}{2}(-C^2-C_j^2)=1\ ,
\end{equation}
with equality if and only if $[C]_\R, [C_j]_\R$ are colinear. But since these classes belong to the lattice $\mathrm{im}(H_2(S,\Z)\to H_2(S,\R))$  and $(C_j,{\cal K}_S)=  (C,{\cal K}_S)=-1$, the colinearity condition implies $ [C]_\R=[C_j]_\R$, which cannot hold in our case. Therefore the inequality (\ref{ineq}) implies $C C_j=0$, i.e. $[C]\in H^\bot$. Using the isomorphism (\ref{iso}) we see that
$g_*(C)^2=C^2=-1$. On the other hand ${\cal K}_{S}=g^*({\cal K}_{S_0})(C_j)$, so
$$-1=(C,{\cal K}_{S})=(C,g^*({\cal K}_{S_0})(C_j))=CC_j+(C,g^*({\cal K}_{S_0}))=(g_*(C), {\cal K}_{S_0})\ .
$$
This shows that the effective divisor $C_0:=g_*(C)$ of $S_0$ satisfies the conditions
$$C_0^2=(C_0, {\cal K}_{S_0})=-1\ .
$$
Since $\sigma_{C_0}<\sigma_C$ we can apply the induction assumption to $C_0$. Note   that $C=g^*(C_0)$. To see this it suffices to see that  
\begin{itemize}
\item[-] they differ by a multiple  of $C_j$,
\item[-] the corresponding homology classes coincide, because they belong to $H^\bot$ and have the same image via $g_*$.
\end{itemize} 

By induction assumption we know that $C_0$ decomposes as $C_0=E_0+f_0^*(\G_0)$, where $f_0:S_0\to S'$ is the contraction to the minimal model of $S_0$ (which is also the minimal model of $S$), and $\G_0$ is a homologically trivial divisor in $S_0$. Using the equality $C=g^*(C_0)$,   we get $C=g^*(E_0)+g^*(f_0^*(\G_0))$. It suffices to note that $g^*(E_0)$ is a simply exceptional divisor.
\end{proof}

The minimal class VII surfaces admitting a (non-empty) homologically trivial   effective divisor $\G$ are classified \cite{E}, we can describe explicitly the second term $f^*(\G)$ in the decomposition given by  Lemma \ref{NumExcept}.

\begin{Rem} If $\G$ is a (non-empty) homologically trivial   effective divisor on  a  minimal class VII surface $X$, then the pair $(X,\G)$ is one of the following:
\begin{enumerate}
\item  $X$ is an elliptic Hopf surface and $\G=\sum_{i=1}^k n_i C_i$, where $n_i\geq 0$, $\sum_{i=1}^k n_i>0$ and  $C_i$ are elliptic curves,
\item $X$ is a Hopf surface with two elliptic curves $C_1$, $C_2$, and $\G=n_1 C_1+n_2 C_2$  with $n_i\geq 0$, $n_1+n_2>0$,
\item $X$ is a  Hopf surface with an elliptic curve  $C$  and $\G=n C$ with $n> 0$,
\item $X$ is an Enoki surface  and $\G=n C$, where $n>0$ and $C\subset X$ is a cycle of rational curves.
\end{enumerate}
\end{Rem}

\begin{Lem}\label{closed}  Let ${\cal X}\to B$ be a holomorphic family of class VII surfaces, let $e=(e_b)_{b\in B}\in H^0(B,\underline{H}_2)$, $b_0\in B$ and $(b_n)_{n\in\mathbb{N}^*}$ a sequence in $B$ converging to $b_0$. Consider, for every $n\in\mathbb{N}^*$, a simply  exceptional divisor $D_n\subset X_{b_n}$ in the class $e_{b_n}\in H_2(X_{b_n},\Z)$ such that the sequence $(D_n)_{n\in\mathbb{N}^*}$  converges in ${\cal B}^e_B({\cal X})$ to a 1-cycle $D_0\subset X_{b_0}$.  Then
\begin{enumerate}
\item The limit cycle $D_0$ is a simply exceptional divisor in $X_{b_0}$.
\item Regarding $D_n$, $D_0$ as  complex subspaces of ${\cal X}$, one has 
$\lim_{n\to\infty} D_n=D_0$ in the Douady space ${\cal D}ou({\cal X})$.
\end{enumerate}
\end{Lem}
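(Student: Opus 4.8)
\emph{Strategy.} I would derive both statements from the rigidity of simply exceptional divisors, i.e.\ from Proposition~\ref{open} together with the uniqueness statement Remark~\ref{unicity-div}, and use no volume bound (none being available, the family being non-K\"ahlerian). The first step is to exhibit inside the limit cycle $D_0$ a simply exceptional divisor representing the full class $e_{b_0}$. Since ${\cal B}^e_B({\cal X})$ parametrizes effective cycles whose homology class is fixed by the section $e$, the limit $D_0$ is an effective divisor of $X_{b_0}$ representing $e_{b_0}$, so $D_0^2=e_{b_0}^2$; and because each $D_n$ is simply exceptional one has $e_{b_n}^2=D_n^2=-1$ and $\langle e_{b_n},k_{b_n}\rangle=(D_n,{\cal K}_{X_{b_n}})=-1$, which forces $e_{b_0}^2=-1$ and $(D_0,{\cal K}_{X_{b_0}})=\langle e_{b_0},k_{b_0}\rangle=-1$ by local constancy on $\underline{H}_2$ and continuity of $k$. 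Lemma~\ref{NumExcept} applied to $S=X_{b_0}$, $C=D_0$ then yields a decomposition $D_0=E_0+f^\star\Gamma'$ with $E_0$ simply exceptional and $\Gamma'$ a homologically trivial effective divisor on the minimal model $S'$. I would next check that $f^\star\Gamma'$ is homologically trivial in $X_{b_0}$: by the projection formula its class pairs to zero with $f^\star H_2(S',\Q)$ and with the class of every exceptional curve of $f$, which together span $H_2(X_{b_0},\Q)$, and the intersection form of a class VII surface is negative definite, hence nondegenerate, so the class vanishes ($H_2(X_{b_0},\Z)$ being torsion free). Consequently $E_0$ represents $e_{b_0}$.

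\emph{The rigidity step.} Next I would apply Proposition~\ref{open} at $b_0$ to the simply exceptional divisor $E_0$, obtaining a connected neighbourhood $U$ of $b_0$ and a holomorphic section $b\mapsto E_{0,b}$ of ${\cal D}ou^e({\cal X})$ over $U$, with $E_{0,b_0}=E_0$, whose values are simply exceptional divisors $E_{0,b}\subset X_b$; since the fibrewise homology class is locally constant and equals $e_{b_0}$ at $b_0$, one has $[E_{0,b}]=e_b$ for all $b\in U$. For $n$ large $b_n\in U$, and then $D_n$ and $E_{0,b_n}$ are both simply exceptional divisors of $X_{b_n}$ representing the class $e_{b_n}$, so $D_n=E_{0,b_n}$ as reduced complex subspaces by Remark~\ref{unicity-div}. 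Since $b\mapsto E_{0,b}$ is continuous, $E_{0,b_n}\to E_{0,b_0}=E_0$ in ${\cal D}ou({\cal X})$ and a fortiori in ${\cal B}^e_B({\cal X})$; comparing with the hypothesis $D_n\to D_0$ and using uniqueness of limits in the cycle space gives $D_0=E_0$. In particular $f^\star\Gamma'=0$ and $D_0$ is simply exceptional, which is assertion (1); and since $D_n=E_{0,b_n}\to E_0=D_0$ in ${\cal D}ou({\cal X})$, assertion (2) follows too.

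\emph{The main obstacle.} The delicate point, and the step I expect to require the most care, is the verification at the end of the first paragraph that the parasitic summand $f^\star\Gamma'$ of $D_0$ carries no homology: this is exactly what permits Proposition~\ref{open} and Remark~\ref{unicity-div} to be invoked for $D_n$ and $E_{0,b_n}$ \emph{with the same class} $e$. Were $f^\star\Gamma'$ homologically non-trivial, $E_{0,b_n}$ would lie in the class $e_{b_n}-[f^\star\Gamma']$ and could not be identified with $D_n$. Conceptually this is the mechanism that rules out a homologically trivial ``bubble'' in the limit: rigidity produces the honest limit $E_{0,b_0}$ as a limit of simply exceptional divisors, and uniqueness forces $(D_n)$ to coincide with it. The statement is thus the natural ``closedness'' counterpart of the openness Proposition~\ref{open}, and the hypothesis that $(D_n)$ converge in ${\cal B}^e_B({\cal X})$ is essential: it is precisely this convergence that fails in the area-explosion situations of the introduction.
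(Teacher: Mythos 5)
Your proposal is correct and follows essentially the same route as the paper: decompose the limit via Lemma~\ref{NumExcept} as $D_0=E_0+f^\star\Gamma'$, use Proposition~\ref{open} to produce nearby simply exceptional divisors $E_{0,b_n}$, identify them with $D_n$ by Remark~\ref{unicity-div}, and conclude $D_0=E_0$ by uniqueness of limits in the (Hausdorff) cycle space. The only difference is that you spell out the verification that $f^\star\Gamma'$ is homologically trivial in $X_{b_0}$, a point the paper leaves implicit.
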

\begin{proof} By Lemma 
\ref{NumExcept} we see that $D_0$ decomposes as $D_0=E_0+f_0^*(\G_0)$, where $E_0$ is a simply exceptional divisor in the class $e$,   $f_0:X_{b_0}\to X'_{b_0}$ is the contraction to the minimal model, and $\G_0$ is a homologically trivial effective divisor in $X'_{b_0}$. By Proposition \ref{open} there exists an open neighborhood ${\cal U}$ of $E_0$ in ${\cal D}ou({\cal X})$ such that every element $E\in {\cal E}$ is a simply exceptional divisor in the fiber $X_{\mathfrak{p}(D)}$ where $\mathfrak{p}:{\cal U}\to U$ is a biholomorphism  on an open neighborhood $U$ of $b_0$. For sufficiently large $n$ one has $b_n\in  U$ we obtain  a simply exceptional divisor  $E_n:=\mathfrak{p}^{-1}(b_n)$ in $X_{b_n}$, and obviously
$$\lim_{n\to\infty} E_n=E_0
$$
in the Douady space ${\cal D}ou({\cal X})$, so also in the Barlet space ${\cal B}^e_B({\cal X})$.  But, for every $n\in\mathbb{N}^*$, $D_n$ and $E_n$ are simply exceptional divisors representing the same homology class, so by Remark \ref{unicity-div} we have $D_n=E_n$ for any $n\in\mathbb{N}^*$. But the Barlet cycle-space   is Hausdorff  so, since the limit of a convergent sequence in a Hausdorff space is unique, $D_0=E_0$. This  shows that $D_0$ is a simply exceptional divisor, and the that $\lim_{n\to\infty} D_n=D_0$ also holds in the Douady space. 
\end{proof}

Let $p:{\cal X}\to B$ be a   deformation of class VII surfaces parameterized by a  complex manifold $B$.   Let again $k\in  H^0(B,\underline{H}^2)$ be the element defined by the family of Chern classes $k_b:=c_1({\cal K}_{X_b})$, $b\in B$.  Using the exponential short exact sequence 
$$0\to \underline{\Z}\to{\cal O}\to{\cal O}^*\to 0
$$
on ${\cal X}$, the vanishing of the cohomology groups $H^2(X_b,{\cal O}_{X_b})$  we obtain the exact sequence of sheaves on $B$
\begin{equation}\label{ExSeq}
0 \to  \qmod{R^1p_*({\cal O}_{\cal X})}{\underline{H}^1}\to R^1p_*({\cal O}^*_{\cal X})\to \underline{H}^2\to 0\ .
\end{equation}
The quotient sheaf on the left (respectively the sheaf in the middle) is the sheaf of holomorphic sections in a locally trivial complex Lie group fiber bundle $\Pic^0$ (respectively $\Pic$) over $B$ whose fiber over $b\in B$ is $\Pic^0(X_b)$ (respectively $\Pic(X_b)$).

On a class VII surface the canonical morphism $H^1(X,\C)\to H^1(X,{\cal O})$ is an isomorphism, so  $R^1p_*({\cal O}_{\cal X})$ can be identified with $\underline{H}^1\otimes{\cal O}_B$.  It is easy to see that the sheaf $\underline{H}^1$ (which is obviously locally constant with fiber isomorphic to $\Z$) is in fact constant; it suffices to see that the degree maps associated with a smooth family of Gauduchon metrics on the fibers define    canonical orientations of the real lines $H^1(X_b,\R_{>0})\subset  H^1(X_b,\C^*)$, so canonical orientations of the lines $H^1(X_b,\R)$, so canonical generators of the infinite cyclic groups $H^1(X_b,\Z)$. Therefore the complex Lie group bundle $\Pic^0$ is in fact trivial with fiber $\C^*$. It is convenient to choose a trivialization $\phi=(\phi_b:\C^*\to\Pic^0(X_b))_{b\in B}$  of this fiber bundle such that for every $b\in B$ one has 
$$\lim_{z\to 0}\deg_g(\phi_b(z))=
\infty$$
with respect to a Gauduchon metric on a fiber $X_b$.
 
For every section $c\in H^0(B,\underline{H}^2)$ we obtain a subbundle $\Pic^c$  of $\Pic$, which  becomes a principal $\C^*$-bundle via the chosen trivialization $\phi:B\times\C^*\to\Pic^0$.  The associated line bundle ${\cal P}^c$  on $B$ can be obtained by adding formally  to $\Pic^c$ a zero section $\{0_b|\ b\in B\}$, such that for every $b\in B$ one has
$$\forall\lambda\in \Pic^{c_b}(X_b)\  \ \lim_{z\to 0} \phi_b(z)\otimes \lambda=0_b \ .
$$

\begin{Th}\label{Zar} Let $e=(e_b)_{b\in B}\in H^0(B,\underline{H}_2)$ such that $e^2=\langle e,k\rangle=-1$, and let $B_e$ be the open subset of $B$ consisting of points $b$ for which $X_b$ admits a simply exceptional divisor $E_b$ in the class $e_b$. 
If $B_e\ne\emptyset$ then
\begin{enumerate}
\item There exists an effective divisor ${\cal E}_0\subset p^{-1}(B_e)$ flat over $B_e$  whose fiber over a point $b\in B_e$ is $E_b$.
\item The map $\sigma_e:B_e\to {\cal P}^e$ defined by 
$b\mapsto[{\cal O}_{X_b}(E_b)]$ extends to a holomorphic section $s_e$ of the line bundle ${\cal P}^e$ over $B$.
\item One has $B_e=B\setminus Z(s_e)$, where $Z(\cdot)$ stands for the vanishing locus of a section. In particular, the complement of $B_e$ is a divisor $H_e:=Z(s_e)$, so $B_e$ is Zariski open.
\end{enumerate}
\end{Th}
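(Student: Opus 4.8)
The plan is to prove the three assertions in order: (1) is formal, given Proposition~\ref{open} and the uniqueness statement Remark~\ref{unicity-div}; the real content of (2) and (3) is a single degeneration statement for the associated line bundles, after which Radó's theorem produces the holomorphic extension and automatically exhibits $B\setminus B_e$ as a divisor. We may assume $B$ connected. For (1): for each $b_0\in B_e$ I would apply Proposition~\ref{open} to the simply exceptional divisor $E_{b_0}\subset X_{b_0}$, obtaining a neighbourhood $\mathcal{U}_{b_0}\subset\mathcal{D}ou^e(\mathcal{X})$ on which the projection $\mathfrak{p}$ is a biholomorphism onto a neighbourhood $U_{b_0}$ of $b_0$, every member being simply exceptional in its fibre. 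Transporting the universal family by $\mathfrak{p}^{-1}$ gives a holomorphic family of simply exceptional divisors over $U_{b_0}$; by Remark~\ref{unicity-div} its fibre over $b$ is \emph{the} unique simply exceptional divisor $E_b\subset X_b$ in the class $e_b$, so these local families agree on overlaps, glue to a global family over $B_e$, and the total space $\mathcal{E}_0\subset p^{-1}(B_e)$ is an effective divisor with fibre $E_b$, flat over $B_e$ by Lemma~\ref{flatnesslemma}.

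Next, $\mathcal{E}_0$ defines a line bundle on $p^{-1}(B_e)$ whose class in the relative Picard group is a holomorphic section $\sigma_e:B_e\to\Pic^e$, i.e.\ a nowhere‑vanishing section of $\mathcal{P}^e$ over $B_e$ given by $b\mapsto[\mathcal{O}_{X_b}(E_b)]$. Fixing a smooth family of Gauduchon metrics $g_b$ on the fibres (restrictions of a Hermitian metric on $\mathcal{X}$), the Gauduchon condition $\partial\bar\partial\omega_{g_b}=0$ gives $\deg_{g_b}\mathcal{O}_{X_b}(E_b)=\mathrm{area}_{g_b}(E_b)\ge 0$, and by the chosen normalization of $\phi$ a point of $\Pic^{e_b}(X_b)$ tends to $0_b$ in $\mathcal{P}^e$ exactly when its $g_b$‑degree tends to $+\infty$. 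The key step is then
\[
\lim_{b\to b_0,\ b\in B_e}\sigma_e(b)=0_{b_0}\quad\text{in }\mathcal{P}^e,\qquad\text{for every }b_0\in\overline{B_e}\setminus B_e .
\]
Indeed, were this false there would be a sequence $b_n\to b_0$ in $B_e$ along which $\sigma_e(b_n)$ stays in a compact subset of $\Pic^e$, hence along which $\mathrm{area}_{g_{b_n}}(E_{b_n})$ is bounded; Bishop's compactness theorem (which needs no Kähler hypothesis, only the area bound) extracts a subsequence of $(E_{b_n})$ converging in $\mathcal{B}^e_B(\mathcal{X})$ to a cycle $D_0\subset X_{b_0}$, and Lemma~\ref{closed} forces $D_0$ to be a simply exceptional divisor of $X_{b_0}$ in the class $e_{b_0}$, so $b_0\in B_e$ — contradiction.

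Finally, for the extension and the divisor structure: work over a small polydisc $V\ni b_0$ on which $\mathcal{P}^e$ is trivialized as $V\times\C$ with $\Pic^e|_V=V\times\C^*$ and zero section $V\times\{0\}$; then $\sigma_e$ becomes a holomorphic function $f:V\cap B_e\to\C^*$, and the key step says $f(b)\to0$ as $b\to\partial B_e$ inside $V$. Extend $f$ by $0$ to $\tilde f:V\to\C$; the key step makes $\tilde f$ continuous, and $\tilde f$ is holomorphic on the open set $\{\tilde f\neq 0\}=V\cap B_e$, hence holomorphic on $V$ by Radó's theorem. Since $\{\tilde f\neq 0\}=V\cap B_e$, we get $Z(\tilde f)=V\setminus B_e$; when $b_0\in\overline{B_e}$ this $\tilde f$ is not $\equiv 0$, so $V\setminus B_e$ is a proper analytic hypersurface of $V$, whence $B_e\cap V$ is dense in $V$ and $b_0\in\mathrm{int}(\overline{B_e})$. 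Thus $\overline{B_e}$ is open as well as closed; being nonempty with $B$ connected, $\overline{B_e}=B$, so $B\setminus B_e=\partial B_e$. The local functions $\tilde f$ agree with $\sigma_e$ on the dense set $B_e$, hence transform correctly under the transition functions of $\mathcal{P}^e$ and patch to a global holomorphic section $s_e\in H^0(B,\mathcal{P}^e)$ extending $\sigma_e$ (this is (2)); locally $Z(s_e)=V\setminus B_e$, so $Z(s_e)=B\setminus B_e=:H_e$, and as $s_e$ is not identically zero this is a divisor, so $B_e=B\setminus H_e$ is Zariski open, which is (3).

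I expect the main obstacle to be the displayed key step, and inside it the translation ``$[\mathcal{O}_{X_b}(E_b)]$ stays away from the zero section of $\mathcal{P}^e$'' $\Longleftrightarrow$ ``$\mathrm{area}_{g_b}(E_b)$ stays bounded'': this requires the identification of the Gauduchon degree of an effective divisor with its area, the fact that $\phi$ is normalized so that $0_b$ is the ``$\deg\to+\infty$'' end (so that an area bound alone — the degree being automatically $\ge 0$ — confines the class to a compact of $\Pic^e$), and then the correct invocation of Bishop's theorem and of Lemma~\ref{closed} in the relative Barlet space over a compact neighbourhood of $b_0$. Once this is in place, the passage from the merely continuous limit to a genuine holomorphic section, together with the fact that the non‑properness locus $B\setminus B_e$ is an honest hypersurface rather than a wilder set, is essentially free from Radó's theorem; the only remaining bookkeeping is that $B_e$ meets every connected component of $B$, automatic under the standing assumption that $B$ is connected.
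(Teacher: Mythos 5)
Your proof is correct and follows essentially the same route as the paper's: Proposition \ref{open} together with Remark \ref{unicity-div} for statement (1), the Gauduchon-degree/area norm on ${\cal P}^e$ combined with Barlet--Bishop compactness and Lemma \ref{closed} to show $\sigma_e\to 0$ along $\overline{B_e}\setminus B_e$, and then Rad\'o's theorem to obtain the holomorphic extension $s_e$ with $Z(s_e)=B\setminus B_e$. Your additional connectedness argument showing $\overline{B_e}=B$ (hence $s_e\not\equiv 0$, so $Z(s_e)$ is a genuine divisor) merely makes explicit a point the paper leaves implicit.
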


\begin{proof} Using Remark \ref{unicity-div} and Proposition \ref{open} we see that the map 
$$B_e\ni b\mapsto E_b\in{\cal D}ou({\cal X})$$
 is well-defined and holomorphic. The first statement follows now from the universal property of the Douady space. 

Let $(g_b)_{b\in B}$ be a smooth family of Gauduchon metrics on the fibers $X_b$ normed such that for every $b\in B$ it holds $\deg_{g_b}(\phi_b(z))=-\log|z|$. The map ${\cal P}^e\to[0,\infty)$ given by
$$\Pic^e(X_b)\ni[{\cal L}]\mapsto   e^{-\deg_{g_b}({\cal L})}\ ,\ 0_b\mapsto 0$$
defines a norm $\|\cdot\|$ on the holomorphic line bundle ${\cal P}^e$ on $B$.  We claim
\vspace{2mm}\\
{\it Claim:  For every $b_0\in \bar B_e\setminus B_e$ one has $\lim_{b\to b_0} \| \sigma_e(b)\|=0$.}
\vspace{2mm}\\
Indeed, if not, there exists a  sequence $(b_n)_{n\in\mathbb{N}^*}$ in $B_e$ converging to $b_0$ and $\varepsilon>0$  such that $\| \sigma_e(b_n)\|\geq \varepsilon$ for every $n\in \mathbb{N}^*$. This implies that the sequence 
$$(\deg_{g_{b_n}}(\sigma_e(b_n )))_{n\in \mathbb{N}^*}=(\mathrm{vol}_{g_{b_n}}(E_{b_n} ))_{n\in \mathbb{N}^*}$$
is bounded, so there exists a subsequence of $(E_{b_n} )_{n\in \mathbb{N}^*}$ converging in the Barlet space ${\cal B}_B^e({\cal X})$ \cite{Ba}. By Lemma \ref{closed} the limit divisor $E_0$ in $X_{b_0}$ will be simply exceptional, so $b_0\in B_e$, which contradicts the choice of $b_0$. This proves the claim.
\vspace{2mm}\\

We define now a section $s_e: B\to {\cal P}^e$ by
$$s_e(b):=\left\{ \begin{array}{ccc}\sigma_e(b)&\rm if&b\in B_e\\
0_b&\rm if & b\not\in B_e \end{array}\right.\ .
$$
Using the claim proved above we see that $s_e$ is continuous. The first statement  of the theorem follows now  from Rado's theorem, and the second follows from the first and the explicit construction of the section $s_e$.
\end{proof}
\begin{Rem}  The functor which associates to a pair $(p:{\cal X}\to B,e)$ consisting of a holomorphic family of class VII surfaces and a section $e=(e_b)_{b\in B}\in H^0(B,\underline{H}_2)$ with 
$$e^2=\langle e, k\rangle =-1$$
 the (possibly empty) effective divisor $H_e\subset B$, commutes with base change.
Therefore this functor should be interpreted as an effective Cartier divisor in the moduli stack classifying pairs $(X,\eta)$ consisting of a class VII surface $X$ and a class $\eta\in H_2(X,\Z)$ with $\eta^2=\langle \eta, {\cal K}_X\rangle =-1$.
\end{Rem} 
\begin{Rem} The underlying codimension 1 analytic set $\mathfrak{H}_e\subset B$ associated with $H_e$ coincides with the hypersurface defined by Dloussky in \cite{D4}.
\end{Rem}

\section{Extension theorems}

Let $p:{\cal X}\to B$ be a holomorphic family of class VII surfaces. Using the notations and the definitions introduced in   the previous section, fix a section  $e=(e_b)_{b\in B}\in H^0(B,\underline{H}_2)$ such that $e^2=\langle e,k\rangle =-1$, and let $B_e:=B\setminus H_e\subset B$ the associated Zariski open subset (see Theorem \ref{Zar}).  We know that $H_e$ is the vanishing divisor of a holomorphic section $s_e\in H^0(B,{\cal P}^e)$, which on $B_e$ is given by $b\mapsto [{\cal O}_{X_b}(E_b)]$, $E_b$ being the unique simply exceptional divisor on $X_b$ representing the class $e_b$.

As we explained in the introduction the fundamental problem studied in this article is the evolution of the divisor $E_b$ as $b$ tends to a point $b_0\in H_e$. Since   the volume of $E_b$ with respect to a smooth family of Gauduchon metrics on the fibers tends to infinity as $b\to b_0$, we cannot expect to understand this evolution using standard complex geometric tools applied to the given family.

In this section we will prove that, under certain assumptions, a lift $\widetilde E_b$   of $E_b$ in the universal cover $\widetilde X_b$ of $X_b$ does have a limit as $b\to b_0$, and this limit is a non-compact divisor of $\widetilde X_{b_0}$. Moreover, the family $(\widetilde E_b)_{b\in B}$ extends to a flat family of divisors over the whole base $B$. We can prove this type of extensions results under two types of assumptions. First we will assume that all the fibers $X_b$ are class VII surfaces with the topological type of class VII surfaces having a non-separating strictly pseudo-convex embedded 3-sphere; second we will suppose that the subset $B_0\subset B$ consisting of points $b$ for which $X_b$ is an unknown class VII surface  is contained in an analytic subset of codimension $\geq 2$.\\

The diffeomorphism type of a GSS surface  with $b_2=n$ is $(S^1\times S^3)\# n\bar{\mathbb{P}}^2$, so if the GSS conjecture was true this would be the diffeomorphism type of {\it any} minimal class VII surface with $b_2=n>0$.  Moreover, by a result of Nakamura \cite{N1}, any minimal  class VII surface with $b_2=n>0$ containing a cycle of curves is a degeneration (``big deformation") of a family of blown up primary Hopf surfaces, so it also has  the  diffeomorphism type of a GSS surface.  By the results of \cite{Te2}, \cite{Te3} every minimal class VII surface with $b_2\in\{1,2\}$ does contain  a cycle, so by Nakamura's result, it has the same diffeomorphism type.  
Therefore, studying class VII surfaces   with the diffeomorphism (or, more generally,  homeomorphism) type $(S^1\times S^3)\# n\bar{\mathbb{P}}^2$, and studying  deformations of such surfaces, is an important problem.

In general, if $M$ is a topological 4-manifold $M$ with this topological type, then every 2-homology class $e\in H_2(M,\Z)$  is represented by an embedded surface with simply connected  components, in particular it lifts to  $H_2(\widetilde M,\Z)$. More precisely, $H_2(\widetilde M,\Z)$ is naturally an $n$-dimensional free  $\Z[F]$-module, where $F$ stands for the multiplicative group $\mathrm{Aut}_M(\widetilde M)=\{f^k|\ k\in\Z\}\simeq\Z$, and the quotient 
$$H_2(\widetilde M,\Z)\otimes_{\Z[F]}\Z=\qmod{H_2(\widetilde M,\Z)}{(\id-f)H_2(\widetilde M,\Z)}$$
is naturally isomorphic to $H_2(M,\Z)$. Here   $\Z$ is regarded as a $\Z[F]$-algebra via the trace morphism 
$$\sum_{n\in\Z} a_n f^n\mapsto\sum_n a_n\ ,$$
 and    $(\id-f)$ stands for the ideal $(\id-f)=(\id-f)\Z[F]\subset \Z[F]$ generated  by $\id-f$.

 Let  $p:{\cal M}\to B$ be a  locally trivial topological fiber bundle with $(S^1\times S^3)\# n\bar{\mathbb{P}}^2$ as fiber and  2-connected basis  $B$,  $\alpha:\widetilde{\cal M}\to{\cal M}$ the universal cover of the total space, and $\widetilde p:=p\circ\alpha:\widetilde{\cal M}\to B$ the induced fiber bundle. Since $B$ is 2-connected, the homotopy long exact sequence associated with the locally trivial fiber bundle $p$ shows that the fiber embedding $\iota_b:M_b\hookrightarrow {\cal M}$ induces an isomorphism $\pi_1(M_b,m)\textmap{\simeq} \pi_1({\cal M},m)$, so  the induced projection $\alpha_b:\widetilde M_b\to M_b$ is a universal cover of the fiber $M_b$ for every $m\in M_b$. 
Note also that the assumption ``$B$ is 2-connected" implies that the locally constant sheaves $\underline{H}_i$, $\underline{H}^i$ on $B$ are constant, for every $i\in\mathbb N$.  Denoting by $\underline{\widetilde H}_i$, $\underline{\widetilde H}^i$ the corresponding sheaves associated with the fiber bundle $\widetilde p$ we obtain sheaf epimorphisms $\underline{\widetilde H}_i\to \underline{ H}_i$ and sheaf monomorphisms $\underline{H}^i\to \underline{\widetilde H}^i$ defined by the morphism sequences $(\alpha_{b*})_{b\in B}$, $(\alpha_{b}^*)_{b\in B}$.

We denote by $  f$ a generator of $\mathrm{Aut}_{\cal M}(\widetilde {\cal M})$ and by $f_b$ the induced generator of $\mathrm{Aut}_{M_b}(\widetilde M_b)$.
 With these notations   we obtain, for any $b\in B$, a commutative diagram with horizontal short exact sequences:
$$
\begin{array}{ccccccc}
0\to &(\id-f_b) H_2(\widetilde M_b,\Z)&\hookrightarrow &H_2(\widetilde M_b,\Z)&\textmap{\alpha_{b,*} } &H_2(M_b,\Z)&\to 0\vspace{2mm}\\  
&\uparrow \simeq &&\uparrow\simeq  &&\uparrow\simeq \vspace{2mm}\\  
0\to &(\id-  f) H^0(B,\underline{\widetilde H}_2)&\hookrightarrow &H^0(B,\underline{\widetilde H}_2)&\textmap{  \alpha_{*} } &H^0(B,\underline{H}_2)&\to 0
\end{array}\ .
$$

Let $X$ be a class VII surface with  topological type $(S^1\times S^3)\# n\bar{\mathbb{P}}^2$ and $\Sigma$ a strictly pseudo-convex non-separating embedded 3-sphere in $X$. A lift $\widetilde\Sigma$ of $\Sigma$ to $\widetilde X$ separates $\widetilde X$ in two manifolds with common boundary $\widetilde\Sigma$, so we can define the pseudo-convex and pseudo-concave ends  of $\widetilde X$ as in the case of GSS surfaces (see section \ref{DefClassVII}):   the  pseudo-convex end of $\widetilde X$  is  the end defined by the connected component of $\widetilde X\setminus\widetilde\Sigma$ whose closure has strictly pseudo-concave boundary.  One can prove that  this labeling  of the ends is coherent, i.e.  independent of the choice of $\Sigma$ and $\widetilde\Sigma$.
\begin{Th}\label{first} (first extension theorem) Let $p:{\cal X}\to B$ be a holomorphic family of class VII surfaces with 2-connected basis $B$, such that every fiber $X_b$
\begin{enumerate}
\item has the topological type $(S^1\times S^3)\# n\bar{\mathbb{P}}^2$, 
\item admits  a non-separating strictly pseudo-convex embedded 3-sphere $\Sigma_b$.
\end{enumerate}  Let $e=(e_b)_{b\in B}\in H^0(B,\underbar{H}_2)$ with $e^2=\langle e, k\rangle=-1$ such that $B_e\ne\emptyset$.
Let $\alpha:\widetilde {\cal X}\to{\cal X}$ be a universal cover of ${\cal X}$,   and  $\widetilde e=(\widetilde e_b)_{b\in B}$  a lift of $e$ in $H^0(B,\widetilde{\underline{H}}_2$).
Then there exists an effective divisor $\widetilde {\cal E}\subset\widetilde{\cal X}$ with the following properties:
\begin{enumerate}
\item For any point $b\in B_e$, the   fiber  $\widetilde E_b:=\widetilde {\cal E}\cap\widetilde X_b$  is a lift representing  $\widetilde e_b$ of  the unique simply exceptional divisor $E_b$ in the class $e_b$,
\item For any point  $b\in H_e$, the   fiber  $\widetilde E_b$ is bounded towards the pseudo-convex  end of $\widetilde X_b$, but unbounded towards its    pseudo-concave end.
\item $\widetilde {\cal E}$ is flat over $B$.
\end{enumerate}
\end{Th}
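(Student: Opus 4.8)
The plan is to build the divisor $\widetilde{\cal E}$ over the Zariski-open set $B_e$ first, and then extend it across $H_e$ by a Remmert--Stein type argument combined with the boundedness estimates coming from \cite{D4}. Concretely, I would proceed as follows.

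\emph{Step 1: Construction over $B_e$.} By Theorem \ref{Zar}, over $B_e$ there is an effective divisor ${\cal E}_0\subset p^{-1}(B_e)$, flat over $B_e$, whose fiber over $b$ is the unique simply exceptional divisor $E_b$ in the class $e_b$. Since each $E_b$ is a tree of rational curves, in particular simply connected, it admits lifts to $\widetilde X_b$, and the set of such lifts is a torsor under $F\simeq\Z$; the choice of the lift $\widetilde e_b\in H^0(B,\widetilde{\underline H}_2)$ of $e$ singles out, for each $b$, the unique lift $\widetilde E_b$ of $E_b$ representing $\widetilde e_b$. Because the family $({\cal E}_0\cap X_b)_b$ is flat and $B_e$ is (after restricting, in a neighborhood of any fiber) simply connected along the base direction, the local lifts patch into an effective divisor $\widetilde{\cal E}|_{p^{-1}(B_e)}$ in $\widetilde{\cal X}|_{B_e}$, flat over $B_e$ by Lemma \ref{flatnesslemma} (its restriction to each $\widetilde X_b$ is the divisor $\widetilde E_b$). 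One should check here that the lift is globally well-defined, which uses that $B$ is 2-connected, so that the monodromy of $\alpha$ acts trivially on the relevant data, and that the section $\widetilde e$ is globally defined.

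\emph{Step 2: Boundedness towards the pseudo-convex end.} The crucial input is that, by the results of \cite{D4} (equivalently, the hypersurface $\mathfrak H_e$ of Dloussky coincides with $|H_e|$ by the last Remark of the previous section), for $b$ near a point $b_0\in H_e$ the lift $\widetilde E_b$ stays inside a fixed compact neighborhood of the pseudo-convex end of $\widetilde X_{b_0}$ -- more precisely, there is a fixed strictly pseudo-convex hypersurface $\widetilde\Sigma$ (a lift of $\Sigma_{b_0}$) such that $\widetilde E_b$ is contained in the pseudo-convex side $\bar A$ for all $b$ close to $b_0$, uniformly. This is where the explosion of the area of $E_b$ is "absorbed": the area of $\widetilde E_b$, measured on the noncompact end, can be large, but the curve cannot cross $\widetilde\Sigma$ towards the pseudo-convex end. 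The mechanism is the maximum principle / pseudo-convexity: a compact curve cannot touch a strictly pseudo-convex boundary from the concave side. So on each relatively compact slab $\widetilde{\cal X}_{\bar A}:=\widetilde{\cal X}\cap(\text{pseudo-convex side of a global }\widetilde\Sigma)$ the volumes $\vol(\widetilde E_b\cap\widetilde{\cal X}_{\bar A})$ are locally bounded. Hence by Bishop's theorem the family $(\widetilde E_b\cap\widetilde{\cal X}_{\bar A})_{b\in B_e}$ is relatively compact in the Barlet space of $\widetilde{\cal X}_{\bar A}$, and the arguments of Lemma \ref{closed} (unicity of simply exceptional divisors, Hausdorffness of the cycle space) show that it extends continuously over $H_e$.

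\emph{Step 3: Extension across $H_e$ and flatness.} Exhaust $\widetilde{\cal X}$ by such relatively compact slabs $\widetilde{\cal X}_{\bar A_k}$, $k\to\infty$, each bounded on the pseudo-convex side by a fixed $\widetilde\Sigma$ and extended further and further towards the pseudo-concave end. On each slab, Step 2 produces an effective divisor $\widetilde{\cal E}_k$ (the closure of $\widetilde{\cal E}|_{B_e}\cap\widetilde{\cal X}_{\bar A_k}$), flat over $B$ by Lemma \ref{flatnesslemma} applied to $\widetilde{\cal X}_{\bar A_k}\to B$, whose fiber over $b\in H_e$ is a nonempty effective divisor contained in $\bar A_k$. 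Compatibility of the $\widetilde{\cal E}_k$ for varying $k$ is automatic (each is the closure of the same divisor over the dense open $B_e$), so they glue to an effective divisor $\widetilde{\cal E}\subset\widetilde{\cal X}$; flatness over $B$ is again Lemma \ref{flatnesslemma} since $\widetilde{\cal E}\cap\widetilde X_b$ is an effective divisor of $\widetilde X_b$ for every $b$. Property (1) holds over $B_e$ by construction. For (2): over $b_0\in H_e$ the fiber $\widetilde E_{b_0}$ is, by Step 2, contained in $\bar A$ for a global $\widetilde\Sigma$, hence bounded towards the pseudo-convex end; it is unbounded towards the pseudo-concave end because if it were compact it would be a simply exceptional divisor in $X_{b_0}$ (by Lemma \ref{closed} applied in a neighborhood after pushing down), forcing $b_0\in B_e$, a contradiction -- and a noncompact divisor contained in the relatively compact $\bar A$ would have a closure meeting $\widetilde\Sigma$, again contradicting pseudo-convexity, so it must escape towards the pseudo-concave end.

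\emph{Main obstacle.} The essential difficulty is Step 2: establishing the \emph{uniform} boundedness of $\widetilde E_b$ towards the pseudo-convex end as $b\to b_0\in H_e$. The area of $\widetilde E_b$ is not bounded, so one genuinely needs the geometry of the pseudo-convex/pseudo-concave ends of $\widetilde X_b$, i.e. the explicit structure of GSS surfaces and the results of \cite{D4} describing the candidate limit series and showing it escapes only towards the pseudo-concave end. Controlling how the slab $\bar A_b$ and the hypersurface $\widetilde\Sigma_b$ vary with $b$ (so that a \emph{single} $\widetilde\Sigma$ works near $b_0$) is the delicate technical point; once a uniform barrier hypersurface is in place, Bishop compactness and the unicity/Hausdorffness arguments of Lemma \ref{closed} do the rest, and the flatness statements are formal consequences of Lemma \ref{flatnesslemma}.
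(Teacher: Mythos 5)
Your overall skeleton (lift the simply exceptional divisors over $B_e$ using the class $\widetilde e$, use strict pseudo-convexity as a barrier near a point $b_0\in H_e$, then take the closure and get flatness from Lemma \ref{flatnesslemma}) is the same as the paper's, and your Step 1 and your argument for non-compactness of $\widetilde E_{b_0}$ (a compact limit would be simply exceptional by Lemma \ref{closed}, forcing $b_0\in B_e$) are correct. But the heart of the proof is exactly the point you flag as ``the delicate technical point'' and then leave unproved: the existence of a \emph{uniform} barrier, i.e. a single region of the form $f^{-m}(\widetilde{\cal X}^+_U)$, over a whole neighborhood $U$ of $b_0$, disjoint from all the lifted divisors $\widetilde E_v$, $v\in U\cap B_e$. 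The paper obtains this by an elementary open--closed argument: fix one $v_0\in U\cap B_e$ and $m$ with $f^{-m}_{v_0}(\widetilde X^+_{v_0})\cap\widetilde E_{v_0}=\emptyset$, let $V\subset U\cap B_e$ be the set of $v$ where this holds; $V$ is open because the $\widetilde E_v$ are compact, and closed because a limit curve contained in $f^{-m}_w(\widetilde X^-_w)$ cannot touch the strictly pseudo-convex hypersurface $f^{-m}_w(\widetilde\Sigma_w)$; since $U\cap B_e=U\setminus H_e$ is connected ($H_e$ is a divisor), $V=U\cap B_e$. Nothing from \cite{D4} is needed here, and this connectedness argument is the missing idea in your proposal: without it you have no uniform $m$, and your Steps 2--3 do not start.

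The second gap is that your replacement mechanism for the extension does not work as stated. From ``$\widetilde E_b$ cannot cross $\widetilde\Sigma$ towards the pseudo-convex end'' you conclude that $\vol(\widetilde E_b\cap\widetilde{\cal X}_{\bar A_k})$ is locally bounded in $b$; this is a non sequitur, since a barrier on one side says nothing about how much area (or which multiplicities) the curves accumulate \emph{inside} a fixed compact slab, and no a priori bound is available before the limit divisor is constructed. Moreover, even granting Bishop compactness on slabs, you only get subsequential limits of cycles; to conclude that the closure of $\widetilde{\cal E}_0$ is an analytic divisor, that the limit is independent of the sequence, and that the slab pieces glue, you would need further arguments (Lemma \ref{closed} does not apply to the non-compact pieces $\widetilde E_b\cap\widetilde{\cal X}_{\bar A_k}$, which are not simply exceptional divisors in a compact surface). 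The paper avoids all of this: once the uniform barrier shows that the closure of $\widetilde{\cal E}_0$ misses an open subset of each irreducible component $\widetilde p^{-1}(H_0)$ of $\widetilde p^{-1}(H_e)$, the first Remmert--Stein theorem (Korollar zu Satz 12 in \cite{RS}) immediately gives that the closure is an effective divisor of $\widetilde{\cal X}$, with no volume estimates and no cycle-space compactness needed. So you should replace your Bishop-compactness route by the connectedness argument plus Remmert--Stein, or else supply genuine proofs of the volume bound and of the uniqueness/analyticity of the slab limits.
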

\begin{proof} 
Using Theorem \ref{Zar} we get an effective divisor ${\cal E}_0 \subset p^{-1}(B_e)$, flat over $B_e$, whose fiber  over a point $b\in B_e$ is the unique simply exceptional divisor $E_b\subset X_b$ in the class $e_b$. Since $E_b$ is a tree of rational curves (so simply connected), it can be lifted to $\widetilde X_b$, and the set of lifts can be identified with the set $\alpha_{b,*}^{-1}(e_b)\subset H_2(\widetilde X_b,\Z)$. Moreover, since $E_b$ has a simply connected  neighborhood in ${\cal X}$,   such a lift  can be   locally chosen such that it depends holomorphically on $b\in B_e$. 

For $b\in B_e$ let $\widetilde E_b$ be the lift of $E_b$ representing the given class $\widetilde e_b\in H_2(\widetilde X_b,\Z)$. Using Proposition \ref{open} we see that map $B_e\ni b\mapsto  \widetilde E_b\in{\cal D}ou^e({\cal X})$ is biholomorphic. The union
$$\widetilde {\cal E}_0 :=\bigcup_{b\in B_e} \widetilde E_b
$$
is an effective divisor of $\widetilde p^{-1}(B_e)\subset\widetilde{\cal X}$, and the restriction $\resto{\alpha}{\widetilde{\cal E}_0}:\widetilde{\cal E}_0\to {\cal E}_0$ is biholomorphic.  \\

The idea of the proof is to prove, using the Remmert-Stein theorem, that  the closure $\widetilde{\cal E}:=\bar{\widetilde {\cal E}}_0$ of $\widetilde{\cal E}_0$ in $\widetilde{\cal X}$ is a divisor, and that this divisor has the desired properties. The irreducible components of   $\widetilde H_e:=\widetilde{\cal X}\setminus \widetilde p^{-1}(B_e)=\widetilde p^{-1}(H_e)$ have the form $\widetilde p^{-1}(H)$, where $H$ is an irreducible component of the divisor $H_e$ (see Theorem \ref{Zar}). Let $H_0$ be such a component and $b_0\in H_0$. Using the second property of the  fibers, we get an embedding $s_0:S^3\to X_{b_0}$ whose image $\Sigma_0$ is non-separating and strictly pseudo-convex. The image $\widetilde\Sigma_0:=\mathrm{im}(\widetilde s_0)$ of a lift $\widetilde s_0:S^3\to \widetilde X_{b_0}$   of this embedding to $\widetilde X_{b_0}$  separates $\widetilde X_{b_0}$ in two manifolds $\widetilde X_{b_0}^\pm$, the first with strictly pseudo-concave, the second with strictly pseudo-convex  boundary $\widetilde\Sigma_0$.

 We can deform the lift  $\widetilde s_0$ to get a smooth family of embeddings $(\widetilde s_b:S^3\to\widetilde{\cal X})_{b\in U}$, where $U$ is a connected open neighborhood of $b_0$ in $B$, such that  $\widetilde s_{b_0}=\widetilde s_0$, and for every $b\in U$   the image   $\widetilde\Sigma_b:=\mathrm{im}(\widetilde s_b)$ is a strictly pseudo-convex separating hypersurface of $\widetilde X_b$.
Therefore, for every $b\in U$ the  universal cover $\widetilde X_b$ of $X_b$ decomposes as the union of two manifolds $\widetilde X_b^\pm$, the first with pseudo-concave, the second with pseudo-convex boundary $\widetilde\Sigma_b$.
Similarly, $\widetilde{\cal X}_U:=\widetilde p^{-1}(U)$ decomposes as the union of two manifolds $\widetilde{\cal X}_U^\pm=\cup_{b\in U}\widetilde X_b^\pm$   with common boundary $\widetilde{\mathfrak{S}}_U:=\cup_{b\in U} \widetilde\Sigma_b$.   We choose the generator $ f$ of $\mathrm{Aut}_{\cal X}(\widetilde {\cal X})$ such that the induced automorphism $f_b$ of $\widetilde X_{b}$ maps $\widetilde X_{b}^-$ into itself (so it moves the points towards the pseudo-concave end) for every $b\in U$.

Choose a point $v_0\in B_e\cap U$. Since the divisor $\widetilde E_{v_0}\subset   \widetilde X_{v_0}$ is compact, we can find  $m\in\mathbb{N}$ sufficiently large such that $( f^{-m}_{v_0} (\widetilde X^+_{v_0}))\cap \widetilde E_{v_0}=\emptyset$. We claim  
\vspace{3mm}\\
{\it Claim:} $   f^{-m} (\widetilde {\cal X}^+_U)\cap\widetilde{\cal E}_0 =\emptyset$.
\\

Indeed, let $V\subset U\cap B_e$ the subset of points $v$ for which $(f^{-m}_{v} (\widetilde X^+_{v}))\cap \widetilde E_{v}=\emptyset$. Since all divisors $\widetilde E_b$, $b\in B_e$ are compact, it follows easily that $V$ is open in $U\cap B_e$. We will show that it is also closed in $U\cap B_e$; in order to check this,   let $(v_n)_{n\in\mathbb N^*}$ be a sequence in $V$ converging to a point $w\in U\cap B_e$. Since $\widetilde E_{v_n}\subset f^{-m}_{v_n}(\widetilde X^-_{v_n})$ for every $n\in\mathbb N^*$  it follows that   $\widetilde E_{w}\subset f^{-m}_w(\widetilde X^-_{w})$.  The intersection $\widetilde E_{w}\cap  f^{-m}_w(\widetilde\Sigma_w)$ is empty,  because $f^{-m}_w(\widetilde\Sigma_w)$ regarded as boundary of  $f^{-m}_w(\widetilde X^-_{w})$ is strictly pseudo-convex. Therefore $\widetilde E_{w}\cap f^{-m}_w(\widetilde X^+_{w})=\emptyset$, so $w\in V$.  

Therefore $V$ is both open and closed in   $U\cap B_e=U\setminus H_e$, which  is connected, so $V=U\cap B_e$. Therefore $(f^{-m}_u (\widetilde { X}^+_u))\cap\widetilde{\cal E}_0=\emptyset$ for every $u\in  U\cap B_e$, so for every $u\in U$; taking into account that 
$\bigcup_{u\in U}f^{-m}_u (\widetilde { X}^+_u)=  f^{-m} (\widetilde {\cal X}^+_U)$, 
this proves the claim.
\vspace{2mm}

Note now that $  f^{-m} (\widetilde {\cal X}^+_U)$ is a neighborhood of any point in $f^{-m-1}_{b_0} (\widetilde X^+_{b_0})$. Therefore the claim implies that    $\widetilde H_0$  contains points  which do not belong to  the closure $\overline{\widetilde {\cal E}}_0$ of the divisor $\widetilde{\cal E}_0\subset \widetilde {\cal X}\setminus \widetilde H_e$. Since this holds for every irreducible component $\widetilde H_0$ of $\widetilde H_e$, the existence of a divisor ${\cal E}$ satisfying the  first   property  stated in the theorem follows now from the Remmert-Stein theorem (see \cite{RS} Korollar zu Satz 12, p. 300). For the second  property note first that for any $b_0\in H_e$ we have $(f^{-m}_{b_0} (\widetilde X^+_{b_0}))\cap \widetilde E_{b_0}=\emptyset$, so $\widetilde E_{b_0}$ is indeed bounded towards the pseudo-convex end of $\widetilde X_{b_0}$. On the other hand  $\widetilde E_{b_0}$ cannot be compact  because, if it were, its projection on $X_{b_0}$ would be a limit of simply exceptional divisors, so itself simply exceptional (contradicting $b_0\in H_e$).
\vspace{2mm}

For the third property note that for every $b\in B$ the fiber $\widetilde E_b$ of $\widetilde{\cal E}$ is a divisor of $\widetilde X_b$, so the result follows from Lemma \ref{flatnesslemma}.

\end{proof}
\begin{Rem}\label{BM} Note that for $b\in B_e$ the divisor $\widetilde E_b$ represents the class $\widetilde e_b\in H_2(\widetilde X_b,\Z)$. This property cannot be extended for  $b\in H_e$ because in this case $\widetilde E_b$ is no longer compact. However, using the flatness property of $\widetilde {\cal E}$ over $B$ we see that for every $b\in B$
\begin{enumerate}
\item $c_1({\cal O}_{X_b}(\widetilde E_b))$ is the image of the Poincaré dual $PD(\widetilde e_b)\in H^2_c(\widetilde X_b,\Z)$ of $\widetilde e_b$ via the natural morphism   $H^2_c(\widetilde X_b,\Z)\to H^2 (\widetilde X_b,\Z)$.
\item $\widetilde E_b$ represents the image of $\widetilde e_b$ in the Borel-Moore homology group $H_2^{\rm BM}(\widetilde X_b,\Z)$.
\end{enumerate}
\end{Rem}
Taking into account Remark  \ref{BM} and Theorem \ref{first} we obtain immediately Theorem \ref{firstTh} stated in the introduction.
\begin{Rem}\label{withGSS} The assignment $(p:{\cal X}\to B,\alpha:\widetilde{\cal X}\to {\cal X},\widetilde e)\mapsto\widetilde {\cal E}\subset \widetilde{\cal X}$ given by the proof of Theorem \ref{first} is compatible with base changes $\chi:B'\to B$ satisfying $\chi^{-1}(B_e)\ne\emptyset$. 

Using this remark one can prove that the assumption $B_e\ne\emptyset$ in Theorem \ref{first} can be replaced with one of the following equivalent assumptions:
\begin{itemize}
\item[(i)] For every $b\in B$  there exists small deformations of $X_b$ admitting a simply exceptional divisors in the class $e_b$.
\item[(ii)] For every $b\in B$ the versal deformation of $X_b$  contains surfaces admitting a simply exceptional divisors in the class $e_b$.
\end{itemize} 
In particular, the theorem holds when all the fibers are GSS surfaces, because any GSS surface can be deformed into simply blown up primary Hopf surfaces. We will see in the next section that for families of GSS surfaces, the divisors $\widetilde E_b$, $b\in H_e$ are always given by infinite series of compact curves.
\end{Rem}
Using  Remark  \ref{withGSS} we obtain Theorem \ref{secondTh} stated in the introduction.
\\

Coming back to the conditions and the proof  of Theorem \ref{first}, consider the semigroup  $G_+:=\{ f^{n}|\ n\in\mathbb{N}\}$ of transformations of $\widetilde {\cal X}$ which act fiberwise with respect to the fibration $\widetilde p:\widetilde {\cal X}\to B$, where the generator $ f$ of $\mathrm{Aut}_{\cal X}(\widetilde{\cal X})$ is chosen again such that it moves the points towards the pseudo-concave ends of the fibers $\widetilde X_b$.  
\begin{Rem} \label{newrem}

 The closure $\overline{\widetilde {\cal F}_0}$ of  the effective divisor 
$$\widetilde {\cal F}_0:=G_+(\widetilde {\cal E}_0)=\bigcup_{n\in\mathbb{N}}  f^n(\widetilde{\cal E}_0)\subset \widetilde{p}^{-1}(B_e)$$
 in $\widetilde{\cal X}$ is also an effective divisor of $\widetilde{\cal X}$, flat  over $B$. 
\end{Rem} 
\begin{proof} We use the same arguments as in the proof Theorem \ref{first} and we take into account that $G_+$ moves $\widetilde{{\cal E}}_0$ towards the pseudo-concave end. More precisely,  the property $f^{-m} (\widetilde {\cal X}^+_U)\cap\widetilde{\cal E}_0 =\emptyset$  established in the proof implies $f^{-m} (\widetilde {\cal X}^+_U)\cap\widetilde{\cal F}_0 =\emptyset$. Indeed, if for a non-negative integer $n$ one had $x\in f^{-m} (\widetilde {\cal X}^+_U)\cap f^n(\widetilde{{\cal E}}_0)$, then 
$$f^{-n}(x)\in f^{-m-n} (\widetilde {\cal X}^+_U)\cap \widetilde{\cal E}_0\subset f^{-m} (\widetilde {\cal X}^+_U)\cap \widetilde{\cal E}_0\ ,$$
so the intersection $f^{-m} (\widetilde {\cal X}^+_U)\cap\widetilde{\cal E}_0 $ would be non-empty. Here we  used  the obvious inclusion $f^{-n}(\widetilde{\cal X}^+_U)\subset \widetilde{\cal X}^+_U$. 
\end{proof} 

Note that the similar statement formulated for the semigroup    $G_-:=\{ f^{-n}|\ n\in\mathbb{N}\}$ is not true in general.
\\

Our  second extension theorem concerns  families of class VII surfaces for which  the existence of strictly pseudo-convex embedded 3-spheres is assumed only for the fibers $X_b$, $b\in B\setminus A$, where $A$ is an analytic subset  of codimension $\geq 2$ of $B$ and  contained in the divisor $H_e$.  This theorem contains obviously  Theorem \ref{thirdTh} stated in the introduction as a special case. 

The ends of the  universal covers  $\widetilde X_b$ of the fibers form a  trivial double cover of $B$, so one can speak about the  pseudo-convex  and the pseudo-concave end of $X_b$ even for points $b\in A$.   
 
\begin{Th}\label{second} (second extension theorem) Let $p:{\cal X}\to B$ be a holomorphic family of class VII surfaces with 2-connected basis $B$.  Let $e=(e_b)_{b\in B}\in H^0(B,\underbar{H}_2)=H_2({\cal X},\Z)$ with $e^2=\langle e, k\rangle=-1$ such that $B_e\ne\emptyset$, and $A$   an analytic subset  of codimension $\geq 2$ of $B$ such that $A\subset H_e$ and, for every $b\in H_e\setminus A$, the fiber $X_b$ has the properties
\begin{enumerate}
\item has the topological type $(S^1\times S^3)\# n\bar{\mathbb{P}}^2$, 
\item admits  a non-separating strictly pseudo-convex embedded 3-sphere $\Sigma_b$.
\end{enumerate}
 Let $\alpha:\widetilde {\cal X}\to{\cal X}$ be a universal cover of ${\cal X}$,   and  $\widetilde e=(\widetilde e_b)_{b\in B}$  a lift of $e$ in $H^0(B,\widetilde{\underline{H}}_2$).
Then there exists an effective divisor $\widetilde {\cal E}\subset\widetilde{\cal X}$ with the following properties:
\begin{enumerate}
\item [(i)] For any point $b\in B_e$, the   fiber  $\widetilde E_b:=\widetilde {\cal E}\cap\widetilde X_b$  is a lift  of  the unique simply exceptional divisor $E_b$ in the class $e_b$. This lift represents   the class $\widetilde e_b$.
\item [(ii)] for any point  $b\in H_e=B\setminus B_e$, the   fiber  $\widetilde E_b$ is a divisor of $\widetilde X_b$ which is bounded towards the pseudo-convex  end of $\widetilde X_b$, but unbounded towards its    pseudo-concave end.
\item [(iii)] $\widetilde {\cal E}$ is flat over $B$.
\end{enumerate}
\end{Th}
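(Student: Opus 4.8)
The plan is to mimic the proof of Theorem \ref{first}, but to handle the bad locus $A$ by an extension-across-codimension-$2$ argument at the very end. First I would apply Theorem \ref{first} (or rather its proof) to the family restricted over $B\setminus A$. Indeed, $B\setminus A$ is still $2$-connected (removing an analytic subset of codimension $\geq 2$ from a $2$-connected complex manifold preserves $2$-connectedness, by transversality), every fiber over $B\setminus A$ satisfies the two topological/analytic hypotheses, and $B_e\ne\emptyset$ is a subset of $B\setminus A$ since $A\subset H_e$. Hence Theorem \ref{first} produces an effective divisor $\widetilde{\cal E}'\subset\widetilde p^{-1}(B\setminus A)$, flat over $B\setminus A$, whose fiber over $b\in B_e$ is the lift $\widetilde E_b$ of the simply exceptional divisor $E_b$ in the class $\widetilde e_b$, and whose fiber over $b\in H_e\setminus A$ is a divisor of $\widetilde X_b$, bounded towards the pseudo-convex end and unbounded towards the pseudo-concave end.

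Next I would extend $\widetilde{\cal E}'$ across $\widetilde p^{-1}(A)$. The natural tool is a Remmert--Stein / Thullen-type extension theorem: an analytic subset of pure codimension $1$ defined on the complement of an analytic subset of codimension $\geq 2$ in a (locally) normal complex space extends to an analytic subset of the whole space, provided one controls it near the bad locus. Here $\widetilde p^{-1}(A)$ has codimension $\geq 2$ in $\widetilde{\cal X}$ (since $\alpha$ is a local biholomorphism and $p$ a submersion), so $\widetilde{\cal E}'$ is a pure-codimension-$1$ analytic subset of $\widetilde{\cal X}\setminus\widetilde p^{-1}(A)$. To apply the extension theorem one must rule out that the closure $\bar{\widetilde{\cal E}}'$ in $\widetilde{\cal X}$ blows up along $\widetilde p^{-1}(A)$, i.e. that it fails to be analytic there or picks up components of the form $\widetilde p^{-1}(A)\times(\text{curve})$. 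This is where the boundedness towards the pseudo-convex end, established uniformly over $B\setminus A$, is essential: exactly as in the proof of Theorem \ref{first}, choose for $b_0\in A$ a local strictly pseudo-convex separating hypersurface $\widetilde{\Sigma}_b$, $b\in U$, cutting $\widetilde X_b$ into $\widetilde X_b^\pm$; the connectedness-of-$V$ argument (now run over $(U\cap B_e)$, using that $U\setminus A$ is connected and meets $B_e$ after shrinking, since $H_e$ is a divisor and $A$ has codimension $\geq 2$) shows $(f^{-m}_b(\widetilde X^+_b))\cap\widetilde{\cal E}'=\emptyset$ for all $b\in U\setminus A$, hence $\widetilde{\cal E}'$ stays in a fixed relatively compact region of $\widetilde p^{-1}(U\setminus A)$. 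So $\bar{\widetilde{\cal E}}'$ is locally bounded near $\widetilde p^{-1}(A)$, and by the extension theorem across codimension $\geq 2$ (e.g. \cite{RS}, or the Remmert--Stein theorem applied once more, as in Theorem \ref{first}) its closure $\widetilde{\cal E}:=\bar{\widetilde{\cal E}}'$ is an effective divisor of $\widetilde{\cal X}$.

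It then remains to identify the fibers of $\widetilde{\cal E}$ over points of $A$ and to check flatness. For $b\in A$ the fiber $\widetilde E_b:=\widetilde{\cal E}\cap\widetilde X_b$ is a priori an analytic subset of $\widetilde X_b$; I would argue it is a divisor (pure codimension $1$, no embedded components of dimension $2$) by the local boundedness just obtained together with the fact that $\widetilde{\cal E}$ contains no component of the form $\widetilde p^{-1}(A)$ (which would be excluded by the same pseudo-convexity bound, since $\widetilde p^{-1}(A)$ is unbounded towards {\it both} ends). Boundedness towards the pseudo-convex end of $\widetilde X_b$ for $b\in A$ is inherited from the relation $(f^{-m}_b(\widetilde X^+_b))\cap\widetilde{\cal E}=\emptyset$ obtained by passing to the limit in $U$; and $\widetilde E_b$ cannot be compact, since a compact divisor of $\widetilde X_b$ would be a tree of rational curves projecting to a compact curve in $X_b$ in the class $e_b$, and by Lemma \ref{NumExcept} and Lemma \ref{closed} this would force $b\in B_e$, contradicting $A\subset H_e$. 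Once $\widetilde E_b$ is known to be a divisor of $\widetilde X_b$ for {\it every} $b\in B$, flatness of $\widetilde{\cal E}$ over $B$ follows from Lemma \ref{flatnesslemma}. Finally, property (i) over $B_e$ and the structure of the fibers over $H_e\setminus A$ are already given by $\widetilde{\cal E}'$, and the fiber over $b\in A$ represents the image of $\widetilde e_b$ in $H_2^{\rm BM}(\widetilde X_b,\Z)$ by the flatness and the argument of Remark \ref{BM}. The main obstacle is the extension step: ruling out a two-dimensional component of $\bar{\widetilde{\cal E}}'$ sitting over $A$, for which the uniform pseudo-convex boundedness over $B\setminus A$ — and the subtlety that $U\setminus(A\cup H_e)$ must be shown connected and nonempty — is the crucial input. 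Note that this argument does {\it not} give that $\widetilde E_b$ is a series of {\it compact} curves for $b\in A$: the limit could a priori contain a non-compact irreducible component (an open Riemann surface), and excluding this would require finer control than what the pseudo-convexity bound provides.
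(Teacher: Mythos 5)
Your overall scaffolding (apply Theorem \ref{first} over $B\setminus A$, extend across $\widetilde p^{-1}(A)$ by Remmert--Stein, then check that the fibers over $A$ are proper divisors and conclude flatness from Lemma \ref{flatnesslemma}) coincides with the paper's, and you correctly isolate the real danger: a two-dimensional fiber of the closure over a point of $A$. (Two small remarks: the second Remmert--Stein theorem extends a pure codimension-$1$ analytic set across an analytic set of codimension $\geq 2$ with no boundedness hypothesis whatsoever, so the extension step itself is automatic; the difficulty is entirely in the fiberwise statement over $A$.) But the mechanism you propose for ruling out the bad fibers has a genuine gap. You say: ``choose for $b_0\in A$ a local strictly pseudo-convex separating hypersurface $\widetilde\Sigma_b$, $b\in U$'' and then rerun the open-and-closed argument of Theorem \ref{first}. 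For $b_0\in A$ no strictly pseudo-convex non-separating $3$-sphere is assumed to exist in $X_{b_0}$ -- the fibers over $A$ are exactly the ``unknown'' class VII surfaces, and this is the whole point of the second extension theorem (otherwise Theorem \ref{first} would apply over all of $B$ and there would be nothing to prove). Without strict pseudo-convexity at and near $b_0$, the closedness half of the connectedness argument fails (it is precisely the strict pseudo-convexity of $f^{-m}_w(\widetilde\Sigma_w)$ that prevents the limit divisor from touching that hypersurface), and using the pseudo-convex spheres that exist only at points of $U\setminus A$ gives no uniform bound $m$ valid on a punctured neighborhood of $b_0$.

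The paper closes this gap by a different idea, which your proposal does not contain. Over a neighborhood $U$ of $b_0\in A$ one only has a \emph{differentiable} family of separating $3$-spheres (coming from local triviality of $p$ as a smooth bundle), giving a decomposition of $\widetilde p^{-1}(U)$ into pieces $\widetilde{\cal X}^n_U=f^n(\widetilde{\cal X}^0_U)$ which, after shrinking to a relatively compact $V$ with $\bar V\subset U$, are compact. One then introduces the auxiliary divisor $\widetilde{\cal F}$, the closure in $\widetilde{\cal X}$ of the forward orbit $\bigcup_{n\geq 0}f^n(\widetilde{\cal E}_0)$, notes that $\widetilde{\cal E}_0$ has only finitely many irreducible components, and argues for each component $\widetilde{\cal C}$: if $\widetilde{\cal C}\cap\widetilde{\cal X}_{\bar V}$ were unbounded towards the pseudo-convex end, then the pairwise distinct irreducible divisors $f^n(\widetilde{\cal C})$, all contained in the divisor $\widetilde{\cal F}$, would all meet the compact set $\widetilde{\cal X}^0_{\bar V}$, contradicting Lemma \ref{lmm}. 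This compactness-plus-group-action argument replaces pseudo-convexity at the bad points and is the essential new ingredient missing from your proof; the rest of your outline (non-compactness of $\widetilde E_b$ for $b\in A$ via Lemma \ref{NumExcept}, flatness via Lemma \ref{flatnesslemma}) is consistent with the paper.
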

\begin{proof}   Using Theorem \ref{Zar} again we obtain a divisor ${\cal E}_0\subset p^{-1}(B_e)$, flat over $B_e$, whose fiber  over a point $b\in B_e$ is the unique simply exceptional divisor $E_b\subset X_b$ in the class $e_b$. Applying our first extension theorem Theorem \ref{first} to the family $\widetilde p^{-1}(B\setminus A)\to B\setminus A$ we get an effective divisor $\widetilde{\cal E}^0\subset \widetilde p^{-1}(B\setminus A)$ flat over $B\setminus A$ satisfying the property (i) and also the property (ii)    for any point $b\in B\setminus A$.   So $\widetilde{\cal E}^0$ is the closure in $\widetilde p^{-1}(B\setminus A)$ of the lift $\widetilde {\cal E}_0\subset \widetilde p^{-1}(B\setminus H_e)$ of ${\cal E}_0$ obtained by lifting fiberwise the simply exceptional divisors $E_b$ in the homology class $\widetilde e_b$ (for $b\in B_e$).

Since $\widetilde p^{-1}(A)$ has codimension $\geq 2$ in $\widetilde{\cal X}$, the closure 
$$\widetilde {\cal E}:=\overline{\widetilde{\cal E}^0}=\overline{\widetilde{\cal E}_0}$$
 of $\widetilde {\cal E}^0$ in $\widetilde{\cal X}$ is an effective divisor  of $\widetilde{\cal X}$ by  the second Remmert-Stein extension theorem  (see \cite{RS} Satz 13 p. 299), where   $\overline{\phantom{I}}$ denotes everywhere closure in $\widetilde {\cal X}$. With this choice the first statement of the theorem is proved.  \\

For the second statement, the first problem is to show that for $b\in A$ the intersection $\widetilde E_b:= \widetilde {\cal E} \cap \widetilde X_b$ is a divisor of 
$\widetilde X_b$, i.e. that $\widetilde E_b$ does not coincide with the whole surface $\widetilde X_b$.  We will show first that $\widetilde {\cal E} \cap \widetilde X_b$ is bounded towards the pseudo-convex end, which will imply that $\widetilde E_b\cap \widetilde X_b\ne \widetilde X_b$, so this intersection is indeed a divisor of $\widetilde X_b$ with the desired property.

Consider the semigroup  $G_+:=\{ f^{n}|\ n\in\mathbb{N}\}$ of transformations of $\widetilde {\cal X}$ which act fiberwise with respect to the fibration $\widetilde p:\widetilde {\cal X}\to B$. Here we choose the generator $ f$ of $\mathrm{Aut}_{\cal X}(\widetilde{\cal X})$ as in the proof of Theorem \ref{first}, so that it moves the points towards the pseudo-concave ends of the fibers $\widetilde X_b$. 

Using Remark \ref{newrem} we see that the closure $\widetilde {\cal F}^0$   of  the effective divisor 
$$\widetilde {\cal F}_0:=G_+(\widetilde {\cal E}_0)=\bigcup_{n\in\mathbb{N}}  f^n(\widetilde{\cal E}_0)\subset \widetilde{p}^{-1}(B_e)$$
in $\widetilde p^{-1}(B\setminus A)$ is also an effective divisor of $\widetilde p^{-1}(B\setminus A)$, flat  over $B\setminus A$. 
By the second Remmert-Stein extension theorem  cited above we get an effective divisor   
$$   \widetilde{\cal F} =\overline{\widetilde{\cal F}^0}=\overline{\widetilde{\cal F}_0}  $$
  in $\widetilde {\cal X}$, where again $\overline{\phantom{I}}$ denotes everywhere closure in $\widetilde {\cal X}$. For every $n\in\mathbb{N}$, we have $  f^n(\widetilde {\cal E}_0)\subset \widetilde{\cal F} $  so, since the right hand space is closed,  we get  $  f^n( \widetilde {\cal E})\subset \widetilde{\cal F}$, which proves the inclusion
\begin{equation}\label{inclusion}
\forall n\in\mathbb{N}\ ,   f^n( \widetilde {\cal E})\subset \widetilde{\cal F} \  .
\end{equation}

Recall that the effective divisor $\widetilde{\cal E}_0$ of $\widetilde p^{-1}(B_e)$ is identified with ${\cal E}_0$ via the covering map $\alpha$. Since the set of irreducible components of ${\cal E}_0$ is finite (bounded by the minimal number of irreducible components of  the simply exceptional divisors $E_b$ as $b$ varies in $B_e$), the same will hold for $\widetilde{\cal E}_0$. Let $\widetilde {\cal C}_0$ be an irreducible component of $\widetilde{\cal E}_0$. Using the two Remmert-Stein theorems cited  above, we see that the closure $\widetilde {\cal C}^0$ of $\widetilde {\cal C}_0$ in $\widetilde p^{-1}(B\setminus A)$ is an effective divisor in this manifold, and that $\widetilde{\cal C}:=\overline{\widetilde{\cal C}^0}=\overline{\widetilde{{\cal C}}_0}$ is an effective divisor of $\widetilde{\cal X}$. We will show that 
\vspace{3mm}\\
{\it Claim 1: $\widetilde{\cal C}\cap \widetilde X_b$ is bounded towards the pseudo-convex end of $\widetilde X_b$ for every $b\in A$. }
\vspace{3mm}\\
Since  $ {\widetilde{\cal E}}$ is the (finite!) union of the closures $\widetilde {\cal C}$ of the  irreducible components $\widetilde{\cal C}_0$ of $\widetilde {\cal E}_0$, the claim implies obviously the second statement of the theorem.\\

We prove now Claim 1. Choose a point $b_0\in A$. Since $p$ is a locally trivial differentiable fiber bundle, it follows that there exists a smooth embedding $s_0:S^3\to X_{b_0}$ whose image $\Sigma_0$ does not disconnect $X$, and let $\widetilde s_0:S^3\to\widetilde X_{b_0}$ be a lift of $s_0$ to $\widetilde X_{b_0}$. We can deform $\widetilde s_0$ to get a smooth family of embeddings $(\widetilde s_b:S^3\to\widetilde{\cal X})_{b\in U}$, where $U$ is a connected open neighborhood of $b_0$ in $B$, such that $\widetilde s_{b_0}=\widetilde s_0$ and $s_b$ is an embedding of $S^3$ in $\widetilde X_b$ for every $b\in U$. We can regard this family as an embedding $\widetilde s:U\times S^3\to \widetilde {\cal X}_U:=\widetilde p^{-1}(U)$, whose image $\widetilde\Sigma_U$ separates $\widetilde {\cal X}_U$ in two connected components. Moreover,  $\widetilde {\cal X}_U$ can be written as an infinite union
$$\widetilde {\cal X}_U=\bigcup_{n\in\Z}  \widetilde {\cal X}_U^n \ ,
$$
where $\widetilde {\cal X}_U^n=  f^n(\widetilde {\cal X}_U^0)$ is a   manifold with two boundary components $  f^n(\widetilde\Sigma_U)$, $  f^{n+1}(\widetilde\Sigma_U)$.

Let $V\subsetint U$ an open neighborhood of $b_0$ with compact closure $\bar V\subset U$, and put 
$$\widetilde{\cal X}_{\bar V}:=\widetilde p^{-1}(\bar V)\ ,\ \widetilde\Sigma_{\bar V}:=\widetilde\Sigma_U\cap \widetilde{\cal X}_{\bar V}\ ,\ \widetilde {\cal X}_{\bar V}^n:=\widetilde {\cal X}_U^n\cap \widetilde{\cal X}_{\bar V} \ ,$$
and note that the spaces $\widetilde {\cal X}_{\bar V}^n=  f^n(\widetilde {\cal X}_{\bar V}^0)$ are all compact. Our claim follows from
\vspace{3mm}\\
{\it Claim 2: $\widetilde{\cal C}\cap \widetilde{\cal X}_{\bar V}$ is bounded towards the pseudo-convex end. }
\vspace{3mm}\\
Indeed if, by reductio ad absurdum, $\widetilde{\cal C}\cap \widetilde{\cal X}_{\bar V}$ were not bounded towards the pseudo-convex end, then the same would be true for  $  f^n(\widetilde{\cal C})\cap \widetilde{\cal X}_{\bar V}$  for every $n\geq 0$.  Therefore the irreducible   divisors $  f^n(\widetilde{\cal C})$ intersect  the compact subset $\widetilde{\cal X}^0_{\bar V}$ of $\widetilde {\cal X}$ for all sufficiently large $n\geq 0$. 

On the other hand using the inclusion   $\cup_{n\geq 0} f^n(\widetilde{\cal C})\subset\widetilde{\cal F}$ given by (\ref{inclusion}), we see that $\cup_{n\geq 0} f^n(\widetilde{\cal C})$  is a divisor. With this remark we can apply Lemma \ref{lmm} below, according to which it would follow that the sequence $(  f^n(\widetilde{\cal C}))_{n\geq 0}$ is finite, which is of course impossible, because the intersections of these divisors with $\widetilde p^{-1}(B_e)$  are pairwise different.\\

The third statement of the theorem follows now using the same arguments as in the proof of the similar statement in Theorem \ref{first}. 
\end{proof}
\begin{Lem} \label{lmm} Let $M$ be a connected complex manifold, and let $(D_n)_{n\in\mathbb{N}}$ be a sequence of effective irreducible divisors in $M$. Suppose that 
\begin{enumerate}
\item The union $D:=\bigcup_{n\in\mathbb{N}} D_n$ is a divisor of $M$, and
\item there exists a compact subspace $K\subset M$ such that $K\cap D_n\ne\emptyset$ for every $n\in \mathbb{N}$. 
\end{enumerate}
Then the set of divisors  $\{D_n|\  n\in\mathbb{N}\}$ is finite.
\end{Lem}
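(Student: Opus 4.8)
The plan is to argue by contradiction: suppose the set $\{D_n\mid n\in\mathbb N\}$ is infinite, so after passing to a subsequence we may assume the $D_n$ are pairwise distinct irreducible divisors, each meeting the compact set $K$. First I would fix, for each $n$, a point $x_n\in K\cap D_n$; by compactness of $K$, after a further subsequence $x_n\to x_0\in K$. The key local observation is that $D=\bigcup_n D_n$ is assumed to be an honest divisor of $M$, hence in particular a \emph{closed analytic subset of pure codimension one} in a neighborhood of $x_0$; so in a small polydisc $P$ around $x_0$ it has a local defining function $\phi\in\mathcal O(P)$, and only finitely many irreducible components pass through $x_0$. The idea is that each $D_n$ meeting $P$ must, near $x_0$, be contained in the zero set of $\phi$, and an irreducible analytic set cannot sit inside a proper analytic subset of itself; combining this with the finiteness of local components of $Z(\phi)$ at $x_0$ forces all but finitely many $D_n$ to be equal.

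More precisely, the steps I would carry out are: (1) Reduce to a neighborhood: replace $M$ by a connected Stein (or just small) open $P\ni x_0$ on which $D\cap P=Z(\phi)$, and note $D_n\cap P\ne\emptyset$ for infinitely many $n$ since $x_n\to x_0$ (so $x_n\in P$ eventually). (2) For such $n$, $D_n\cap P$ is a nonempty closed analytic subset of $P$ contained in $Z(\phi)$; since $D_n$ is irreducible in $M$ and $D_n\cap P$ is nonempty and open in $D_n$, it is nonempty and pure of codimension one, so it is a union of some of the finitely many irreducible components of the germ $(Z(\phi),x_0)$ — or more carefully, of the irreducible components of $Z(\phi)$ in $P$, of which there are finitely many after possibly shrinking $P$ (using that $Z(\phi)$ has finitely many irreducible components through the point $x_0$, cf. local decomposition of analytic sets). (3) Hence the map $n\mapsto(\text{set of components of }Z(\phi)\cap P\text{ contained in }\overline{D_n\cap P})$ takes values in a finite set, so infinitely many $D_n$ share the same such component $Z'$; for those $n$, $Z'\subset D_n$, and since $Z'$ and $D_n$ are both irreducible of the same dimension, $Z'$ is an irreducible component of $D_n$, hence $D_n$ contains the global irreducible analytic set $\overline{Z'}$ — but $D_n$ is \emph{itself} irreducible, so $D_n=\overline{Z'}$ is the same divisor for all these $n$, contradicting pairwise distinctness.

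I should be careful about one subtlety in step (2)–(3): passing from "$D_n\cap P$ is contained in $Z(\phi)$ and pure $1$-codimensional" to "it is a union of germ-components of $Z(\phi)$ at $x_0$" requires that each point of $D_n\cap P$ lies on $Z(\phi)$, which is immediate, plus that $D_n\cap P$, being a closed analytic subset of the reduced space $Z(\phi)$ with the same local dimension, is a union of its irreducible components near each of its points; a clean way is to work with the finitely many global irreducible components $Z_1,\dots,Z_r$ of $Z(\phi)$ in a fixed small $P$ (shrink $P$ so this is finite) and observe $D_n\cap P=\bigcup_{i\in S_n}Z_i$ for some $S_n\subseteq\{1,\dots,r\}$, because an irreducible analytic set contained in $Z_1\cup\dots\cup Z_r$ lies in one of the $Z_i$. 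Then $S_n\ne\emptyset$, there are only $2^r$ possibilities, so some $S_{n_0}$ recurs infinitely often, pick $i\in S_{n_0}$, and every such $D_n$ contains $Z_i$ hence (irreducibility) equals the global closure $\overline{Z_i}$. The main obstacle I anticipate is precisely this bookkeeping — making the local-to-global component comparison rigorous, i.e. ensuring the finiteness "number of irreducible components of $Z(\phi)$ meeting a fixed small neighborhood of $x_0$" and that each irreducible $D_n$ restricted to $P$ decomposes into these finitely many pieces; the convergence $x_n\to x_0$ and the contradiction with pairwise distinctness are then routine.
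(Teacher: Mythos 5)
Your argument is correct and is essentially the paper's own proof: both extract a convergent sequence of points $x_n\in K\cap D_n$, localize near the limit point, and play the local finiteness of the irreducible components of the divisor $D$ against the fact that two distinct irreducible divisors cannot share a codimension-one component. The only slip is cosmetic: the closure $\overline{Z'}$ of a local component in $M$ need not itself be analytic, so instead of writing $D_n=\overline{Z'}$ conclude directly that if $Z'\subset D_n\cap D_m$ then the analytic set $D_n\cap D_m$ has full dimension in the irreducible $D_n$, hence $D_n=D_m$, contradicting pairwise distinctness.
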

\begin{proof} If not, there would exist a subsequence $(D_{n_m})_{m\in\mathbb{N}}$ of  $(D_n)_{n\in\mathbb{N}}$ such that $D_{n_k}\ne D_{n_l}$ for every $k\ne l$.  Since all these divisors are irreducible, it follows that:
\vspace{2mm}\\  
{\it Remark: For $k\ne l$, the analytic set $D_{n_k}\cap D_{n_l}$ does not contain any codimension 1 irreducible component.} \\

For every $m\in\mathbb{N}$, let $x_m\in K\cap D_{n_m}$,  and let $x\in K$ be the limit of a convergent subsequence $(x_{m_s})_{s\in\mathbb{N}}$ of $(x_m)_{m\in\mathbb N}$. Let $U$ be a connected open neighborhood of $x$, so $U$ contains all the points $x_{m_s}$  with $s\geq s_0$ for a sufficiently large index $s_0\in\mathbb{N}$ .  For every $s\geq s_0$ we consider an irreducible component $E_s$ of the   intersection $D_{n_{m_s}}\cap U$, so $E_s$ is a {\it non-empty} effective divisor of $U$. Taking into account the remark above, it follows that $E_s\ne E_t$ for $s\ne t$.
 
 Therefore, the intersection $D\cap U=\left(\bigcup_{n\in\mathbb{N}} D_n\right)\cap U$ contains the infinite union $\bigcup_{s\geq s_0} E_s$ of pairwise distinct irreducible effective divisors $E_s$, so $D\cap U$ cannot be a divisor of $U$. This contradicts the first assumption of our hypothesis.

\end{proof}
 
Using the flatness of the obtained divisor $\widetilde{\cal E}$ over $B$ we see as in   Remark \ref{BM}  that, in the conditions of Theorem \ref{second}

\begin{Rem}\label{BMnew} For every point $b\in B$ one has
\begin{enumerate}
\item $c_1({\cal O}_{X_b}(\widetilde E_b))$ is the image of the Poincaré dual $PD(\widetilde e_b)\in H^2_c(\widetilde X_b,\Z)$ of $\widetilde e_b$ via the natural morphism   $H^2_c(\widetilde X_b,\Z)\to H^2 (\widetilde X_b,\Z)$.
\item $\widetilde E_b$ represents the image of $\widetilde e_b$ in the Borel-Moore homology group $H_2^{\rm BM}(\widetilde X_b,\Z)$.
\end{enumerate}
\end{Rem}
A natural problem  concerning the two extensions theorems Theorem \ref{first} and \ref{second} is to understand, for $b\in H_e$, the dependence of the obtained non-compact divisor $\widetilde{E}_b\subset \widetilde X_b$  on the lift $\widetilde{e}_b$ of $e_b$. It is easy to see that
\begin{Rem}  For a fixed  class $e_b$ with $e_b^2=\langle e_b,k_b\rangle=-1$, the  divisors $\widetilde{E}_b$, $\widetilde{E}'_b$ associated with two different lifts $\widetilde{e}_b$, $\widetilde{e}_b'$  of $e_b$ are mapped on each other by a power $f^n$ of the generator $f$ of the automorphism group $\mathrm{Aut}_{X_b}(\widetilde{X}_b)$. 
\end{Rem}

When the fiber $X_b$ is a GSS surface  much more can be said: in the next section we shall see that, if $X:=X_b$ is a GSS surface, then the effective divisors $\widetilde{E}:=\widetilde{E}_b\subset\widetilde{X}$ are  always given by   series of {\it compact} curves. Moreover, in this case we shall see that there exists a natural {\it total} order in the set $\{\widetilde{e}\in H_2(\widetilde{X},\Z)|\ \alpha_*(\widetilde{e})^2=\langle\alpha_*(\widetilde{e}),k_b\rangle=-1\}$, and endowed with this order, this set (which is a basis of $H_2(\widetilde{X},\Z)$) can be identified with $(\Z,\leq)$. The map $\widetilde{e}\mapsto \widetilde E$ which associates to such a class the corresponding effective divisor, is monotone decreasing, in the sense that $\widetilde E\leq \widetilde E'$ whenever $\widetilde e'\leq \widetilde e$ (see Proposition \ref{series}, Remark \ref{unicity}). At this moment we cannot prove these properties for a fiber $X_b$ ($b\in H_e$) which is not assumed to be a GSS surface.

\section{The case of GSS surfaces. The fundamental series} \label{GSSsurfaces}

We begin with the following
\begin{Def} Let $X$ be a compact complex surface. We say that $X$ contains a global spherical shell (GSS), or that $X$ is a GSS surface, if there exists a biholomorphic map $\f:U\to X$ from a neighborhood $U\subset \bb C^2\setminus\{0\}$ of the sphere $S^3$ into $X$ such that $X\setminus \f(S^3)$ is connected.
\end{Def}
Primary Hopf surfaces   are the simplest examples of GSS surfaces. The differential topological type of a GSS surface $X$ with $b_2(X)=n$ is  $(S^1\times S^3)\#n\overline{\mathbb{P}}^2$, so $H_2(X,\Z)$ admits an  (unordered)    basis $\mathfrak{B}\subset H_2(X,\Z)$ trivializing the intersection form $q_X$ of $X$, i.e. such that 
$$e' \cdot e''=\left\{
\begin{array}{ccc}
0&\rm if & e'\ne e''\\
-1&\rm if &e'=e''
\end{array}  \right.\ .$$

  Decomposing the Chern class $c_1({\cal K}_X)=-c_1(X)\in H^2(X,\Z)$ with respect to the Poincaré dual basis $\mathfrak{B}^\vee=\{PD(e)|\ e\in\mathfrak{B}\}$ of $H^2(X,\Z)$ we get 
$$c_1({\cal K}_X)=\sum_{e\in\mathfrak{B}} k_e  PD(e)$$
 with $k_e\equiv 1$ mod 2 (because $c_1({\cal K}_X)$ is a characteristic element) and $\sum_{e\in\mathfrak{B}} k_e^2=-c_1(X)^2= c_2(X)= n$. This shows that $k_e\in\{\pm 1\}$  so, replacing some of the elements $e\in\mathfrak{B}$ by $-e$ if necessary, we may assume that 
\begin{equation}
c_1({\cal K}_X)=\sum_{e\in\mathfrak{B}}    PD(e) \ ,
\end{equation}
or equivalently
\begin{equation}
\forall e\in\mathfrak{B}\ ,\  \langle e ,c_1({\cal K}_X)\rangle=-1 \ .   
\end{equation}

There exists a unique unordered $q_X$-trivializing basis $\mathfrak{B}$ with this property, and this basis will be called {\it the standard basis} of $H_2(X,\Z)$.   Note that the homology class of any simply exceptional divisor of $X$ (if such a divisor exists) is an element of its standard basis.\\

For surfaces $X$ with GSS, the second Betti number  $b_2(X)$ is equal to the number of rational curves in $X$.     A  {\it marked GSS surface} is a pair $(X,C_0)$ consisting of GSS surface with $b_2(X)\ge 1$ and a rational curve $C_0$ in $X$.\\

Any minimal marked GSS surface   $(X,C_0)$ with $n=b_2(X)\ge 1$ can be obtained using a simple 2-step construction consisting of an iterated blow  up  followed by a holomorphic surgery. More precisely, let $\Pi=\Pi_0\circ\cdots\circ\Pi_{n-1}:B^\Pi\to B$ be a sequence of $n$ blowing ups such that 
\begin{itemize}
\item[-] the first blowing up $\Pi_0:B^{\Pi_0}\to B $ blows up the origin $O_{-1}:=(0,0)$ in the 2-dimensional unit ball $B\subset\C^2$,
\item[-] for $i=0,\ldots,n-2$ the  blowing up $\Pi_{i+1}: B^{\Pi_0\circ\dots\circ\Pi_{i+1}}\to B^{\Pi_0\circ\dots\circ\Pi_{i}}$ blows up a point $O_{i}\in  \Pi_{i}^{-1}(O_{i-1})$ in the surface $B^{\Pi_0\circ\dots\circ\Pi_{i}}$ obtained at the previous step.

\end{itemize}

Applying the same sequence of blowing ups to   a ball $B(r)$ of radius $r$ and to its closure $\bar B(r)$ we obtain complex surfaces (respectively compact complex surfaces with boundary)  which will be denoted  by  $B(r)^\Pi$, respectively  $\bar B(r)^\Pi$.

Let now $\s:\bar B\to B^\Pi$ an embedding which extends to a biholomorphism from a neighborhood of $\bar B$ onto a small open ball in $B^\Pi$such that $\s(0)=O_{n-1}\in \Pi_{n-1}^{-1}(O_{n-2})$. One can associate to the pair $(\Pi,\s)$ a minimal surface $X=X(\Pi,\sigma)$ by removing from $\bar B^\Pi$ the image $\sigma(B)$, and identifying the two boundary components $\partial \bar B^\Pi$, $\sigma(\partial \bar B)$ of the compact manifold with boundary $A:=\bar B^\Pi\setminus \sigma(B)$  by $\sigma\circ \Pi$, the holomorphic structure on the resulting differentiable manifold being defined such that the natural surjective locally diffeomorphic map 
$$q_{\Pi,\s}: B(1+\epsilon)^\Pi\setminus \sigma(\bar B(1-\epsilon))\longrightarrow X(\Pi,\s)\  .
$$  
is a local biholomorphism. One can prove that for any marked surface $(X,C_0)$ there exists a pair $(\Pi,\s)$ and a biholomorphism $X\textmap{\simeq} X(\Pi,\sigma)$ such that $C_0$ corresponds to the image via  $q_{\Pi,\s}$ of the   first exceptional curve $\Pi_0^{-1}(O_{-1})$. We will identify $X$ with $X(\Pi,\sigma)$ via this biholomorphism.

For any $i\in\{0,\dots,n-2\}$ consider a small closed ball $\bar B_i$ centered at   $O_i\in B^{\Pi_0\circ\dots\circ\Pi_{i}}$ and denote by $S_i$ the lift of its boundary to $B^\Pi$.  It is also convenient to denote by $S_{-1}:=\partial \bar B^\Pi$, $\bar B_{n-1}:=\sigma(\bar B)$, and $S_{n-1}:=\partial \bar B_{n-1}=\sigma( \partial \bar B)$. In this way $A$ decomposes as the union
$$A=\bigcup_{i=0}^{n-1}\mathfrak{A}_i\ ,
$$
where each $\mathfrak{A}_i$ is a compact surface with two boundary components: a  strictly  pseudo-convex component $\partial_-(\mathfrak{A}_i)\simeq S_{i-1}$ and the strictly pseudo-concave component $\partial_+(\mathfrak{A}_i)\simeq S_i$. Every $\mathfrak{A}_i$ is biholomorphic to the manifold obtained from a blown up closed ball by removing a small open ball centered at a point of the exceptional divisor.

The universal covering space $\widetilde X$ can be obtained as an infinite union of copies  $A_k$ of $A$, the pseudo-convex boundary  component   of   $A_k$ being identified with the pseudo-concave boundary component of the previous annulus $A_{k-1}$ via $\sigma\circ\Pi$. We will identify $A_0$ with $A$, so $A_0$  decomposes as  $\bigcup_{i=0}^{n-1}\mathfrak{A}_i$; correspondingly we get decompositions  
$$\forall j\in\Z\ ,\ A_j=\bigcup_{i=0}^{n-1}\mathfrak{A}_{nj+i}\ ,\ \widetilde X=\bigcup_{s\in\Z}\mathfrak{A}_s\ ,$$
 where $\mathfrak{A}_{nj+i}$ is a copy of $\mathfrak{A}_i$ for every $i\in\{0,\dots,n-1\}$, and $\widetilde X$ is obtained by identifying the pseudo-convex boundary component   $\partial_-(\mathfrak{A}_{s})$ of any  $\mathfrak{A}_{s}$ with the pseudo-concave boundary component $\partial_+(\mathfrak{A}_{s-1})$ of the previous piece, in the obvious way. For every $k\in\Z$ we put
$$\widetilde X_k:=\bigcup_{s\leq k} \mathfrak{A}_s\ .
$$
Note that $\widetilde X_{nj+n-1}= \bigcup_{j'\leq j}A_{j'}$. The boundary $\partial \widetilde X_k$   is the strictly pseudo-concave 3-sphere $\partial_+(\mathfrak{A}_k)\simeq  \partial_+(\mathfrak{A}_{k-n\left[\frac{k}{n}\right]})\simeq S_{k-n\left[\frac{k}{n}\right]}$. We denote by $\widehat X_k$ the surface  obtained by gluing (a copy  of)  $\bar B_{k-n\left[\frac{k}{n}\right]}$ along this boundary. For every $k< l$ we have obtain a commutative diagram
$$
\begin{array}{ccc}
\widetilde X_k&\stackrel{\iota_{k,l}}\longrightarrow &\widetilde X_l\\
\iota_k\downarrow\ \ \ \ & &\ \ \ \downarrow\iota_l\\
\widehat X_k&\stackrel{\pi_{k,l}}\longleftarrow &\widehat X_l
\end{array}\ ,
$$
where $\iota_k$, $\iota_l$, $\iota_{k,l}$ are the obvious inclusions, and $\pi_{k,l}$ is an order  $l-k$ iterated blow  up at the center of the ball $\bar B
_{k-n\left[\frac{k}{n}\right]}$.  For $k<l<m$ one has obviously $\iota_{l,m}\circ\iota_{k,l}=\iota_{k,m}$, $\pi_{k,l}\circ\pi_{l,m}=\pi_{k,m}$. Note that, for $i\in\{1,\dots,n-1\}$ the blow up $\pi_{nj+i-1,nj+i}$   corresponds to $\Pi_i$ via obvious identifications, whereas $\pi_{nj-1,nj}$ corresponds to $\Pi_0\circ\sigma^{-1}$. We will denote by $E_k$ the exceptional curve of the blow up $\pi_{k-1,k}:\widehat X_k\to \widehat X_{k-1}$. The pre-image $\iota_k^{-1}(E_k)$ is of course non-compact, but its Zariski closure in $\widetilde X$ is a compact rational curve $\widetilde C_k$ which can be explicitly obtained in the following way  (see \cite{D1} for details): 
\begin{Rem} \label{s} There exists $s\in \{1,\dots,n+1\}$ such that the proper transform $E_k^s$ of $E_k$ in $\widehat X_{k+s}$ is contained in the image of $\iota_{k+s}$, so $\widetilde C_k:=\iota_{k+s}^{-1}(E_k^s)$ is a compact rational curve in $\widetilde X$, which can be identified with the Zariski closure of $\iota_k^{-1}(E_k)$.
\end{Rem}

Note now that the group morphisms $H_2(\iota_k):H_2(\widetilde X_k,\Z)\to H_2(\widehat X_k,\Z)$ are isomorphisms, and that  via the  isomorphisms $H_2(\iota_k)$, $H_2(\iota_{k-1})$, the monomorphism $H_2(\iota_{k-1,k})$ defines a right splitting of the short exact sequence
$$0\to \Z[E_k]\to H_2(\widehat X_k,\Z)\textmap{H_2(\pi_{k-1,k})} H_2(\widehat X_{k-1},\Z)\to 0 \ .
$$
We denote by $\widetilde e_k$ the image  in $H_2(\widetilde X_k,\Z)$ of the class $[E_k]$ via $H_2(\iota_k)^{-1}$; to save on notations we will use the same symbol for the images of $\widetilde e_k$ in $H_2(\widetilde X_l,\Z)$ ($l\geq k$) and in $H_2(\widetilde X,\Z)$.  With these conventions we can write
$$H_2(\widetilde X_k,\Z)=\bigoplus_{s\leq k} \Z \widetilde e_s\ ,\ H_2(\widetilde X,\Z)=\bigoplus_{s\in\Z} \Z \widetilde e_s
$$
Using Remark \ref{s} we see immediately that, via this decomposition, the homology class $[\widetilde C_k]\in H_2(\widetilde X,\Z)$ decomposes as
\begin{equation}\label{dec} [\widetilde C_k]=\widetilde e_k-\sum_{i=1}^{s_k} \widetilde e_{k+i} \hbox { where } 1\leq s_k\leq n+1\ .
\end{equation} 
Note that one has obviously
\begin{equation}\label{sk} s_k=-\widetilde C_k^2-1\ .
\end{equation}

The $n$ compact curves of the minimal GSS surface $X=X(\Pi,\s)$ are just the projections $C_k:=\alpha(\widetilde C_{nj+k})$ of the compact curves we obtained in the universal cover $\widetilde X$. Note also that the set  $\{e_k:=\alpha_*(\widetilde e_k)|\ k\in\{0,\dots,n-1\}\}$ is precisely the standard basis $\mathfrak{B}$ of $X$.

We will show now that any class $\widetilde e_i$ decomposes formally in a well defined way as an  infinite  series of classes of compact curves which is bounded towards the pseudo-convex end. These formal identities  correspond to equalities in the Borel-More homology of $\widetilde X$. We will begin with several interesting examples:

\begin{Ex} \label{Enoki}  Enoki surfaces. 
\end{Ex}

An Enoki surface $X$ has    a cycle $\sum_{i=0}^{n-1}C_i$ of $n=b_2(X)$ rational curves. We have
$$[C_i]= e_i-e_{i+1}, \quad i\in \Z_n\ .$$

In the universal cover $\widetilde X$ we have curves $\widetilde C_i$ representing the 2-homology classes $[\widetilde C_i]= \widetilde e_i-\widetilde e_{i+1}, \  i\in\bb Z$, therefore we get the decomposition
$$\widetilde e_i= \sum_{j=0}^\infty [\widetilde C_{i+j}] \ .$$

\begin{Ex} 
 Minimal GSS surfaces $X$ with $b_2(X)=1$. 
\end{Ex} 

There are two classes of minimal GSS surfaces  $X$ with $b_2(X)=1$:

\begin{itemize}
\item[-] Enoki surfaces with $b_2=1$,
\item[-] Inoue-Hirzebruch surfaces with one cycle, 
\end{itemize}

The first class has been treated above, so we discuss the second one. 
The curves in the universal cover $\widetilde X$ of an Inoue-Hirzebruch surface $X$ with $b_2(X)=1$ form two chains of rational curves 
$$\sum_{i\in\bb Z} \widetilde C_{2i},\quad \sum_{i\in\bb Z} \widetilde C_{2i+1}$$
 with $[\widetilde C_i]= \widetilde e_i- (\widetilde e_{i+1}+\widetilde e_{i+2})$. Using the Fibonacci sequence $(u_n)_{n\in\mathbb N}$   given by
$$u_0=1,u_1=1,\ u_n=u_{n-2}+u_{n-1}, n\ge 2$$
we get
$$\widetilde e_i=\sum_{n\ge 0}u_n\widetilde C_{i+n}\ .$$

\begin{Ex} 
  Minimal GSS surfaces $X$ with  $b_2(X)=2$. 
\end{Ex} 

There are four classes of minimal GSS surfaces $X$ with  $b_2(X)=2$.
\begin{itemize}
\item[-] Enoki surfaces with $b_2=2$, 
\item[-]  Intermediate surfaces, 
\item[-]  Inoue-Hirzebruch surfaces with one cycle,
\item[-] Inoue-Hirzebruch surface with two cycles consisting of a rational curve with a double point.
\end{itemize}
The case of Enoki surfaces has been treated above  for any $b_2>0$.\\
\paragraph {\it  Intermediate surfaces.} An intermediate surface with $b_2=2$ has a rational curve with a double point  $C_0$ and a non-singular rational curve $C_1$ with
$$C_0^2=-1,\quad C_1^2=-2,\quad C_0C_1=1\ . $$
Using our conventions and notations we get decompositions
$$[C_0]= -e_1,\  [C_1]= e_1-e_0\ .$$
 In the universal covering space $\widetilde X$ we have an infinite chain of curves
$\sum_{i\in\bb Z} \widetilde C_{2i}$ with pairwise disjoint trees $\widetilde C_{2i+1}$, $i\in \bb Z$ which decompose as follows:
$$[\widetilde C_{2i}]= \widetilde e_{2i}- \widetilde e_{2i+1}-\widetilde{e}_{2i+2}\ , \  [\widetilde C_{2i+1}]= \widetilde e_{2i+1}- \widetilde e_{2i+2}\ .$$
We have then 
$$\begin{array}{lcl}\widetilde e_{2i}&=&\displaystyle \sum_{j=0}^\infty 2^j\Bigl\{(\widetilde e_{2(i+j)}-\widetilde e_{2(i+j)+1}-\widetilde e_{2(i+j)+2})+(\widetilde e_{2(i+j)+1}-\widetilde e_{2(i+j)+2})\Bigr\}\\
&&\\
&=&\displaystyle \sum_{j=0}^\infty 2^j\Bigl\{\widetilde C_{2(i+j)}+\widetilde C_{2(i+j)+1}\Bigr\}
\end{array}$$
$$ \widetilde e_{2i+1}=(\widetilde e_{2i+1}-\widetilde e_{2i+2})+\widetilde e_{2i+2}=[\widetilde C_{2i+1}]+\sum_{j=0}^\infty 2^j\Bigl\{[\widetilde C_{2(i+j+1)}]+[\widetilde C_{2(i+j+1)+1}]\Bigr\}\ .
 $$
\paragraph{\it Inoue-Hirzebruch surfaces with $b_2=2$ and one cycle}

Such a surface has a cycle $C_0+C_1$ of non-singular rational curves, where
$$C_0^2=-4 \ ,\ C_1^2=-2\ ,\ C_0C_1=2\ .$$
More precisely
$$[C_0]= -2e_1\ ,\  [C_1]= e_1-e_0\ .$$
In  the universal cover $\widetilde X$   the homology classes of the compact curves  $\widetilde C_i$ decompose as 
$$[\widetilde C_{2i}]= \widetilde e_{2i}-\widetilde e_{ 2i+1} -\widetilde e_{2i+2}-\widetilde e_{2i+3}\ ,\ [\widetilde C_{2i+1}]= \widetilde e_{2i+1}-\widetilde e_{2i+2}\ . $$
These curves form two disjoint infinite chains of rational curves 
$$\sum_{i\in\Z} \widetilde C_{4i} + \widetilde C_{4i+3}\ ,\ \sum_{i\in\Z}\widetilde C_{4i+1} + \widetilde C_{4i+2}\ .$$

We define by induction the following sequences of positive integers $(a_j)_{j\in\mathbb N}$, $(b_j)_{j\in\mathbb N}$ where $a_0=b_0=1$, and
$$\left(\begin{array}{c}a_j\\b_j\end{array}\right)=A^j\left(\begin{array}{c}a_0\\b_0\end{array}\right)\hbox{ with }  A:=\left(\begin{array}{cc}1&1\\2&1\end{array}\right)\ .$$
Then
$$\begin{array}{lcl}
\widetilde e_{2i}&=&\displaystyle \sum_{j=0}^\infty \Bigl\{a_j\bigl(\widetilde e_{2(i+j)}-\widetilde e_{2(i+j)+1}-\widetilde e_{2(i+j)+2}-\widetilde e_{2(i+j)+3}\bigr)+ b_j\bigl(\widetilde e_{2(i+j)+1}-\widetilde e_{2(i+j)+2}\bigr)\Bigr\}\\
&&\\
&=&\displaystyle\sum_{j=0}^\infty a_j [\widetilde C_{2(i+j)}] + b_j [\widetilde C_{2(i+j)+1}]\\
&&\\
& =& [\widetilde C_{2i}]+[\widetilde C_{2i+1}] + 2[\widetilde C_{2i+2}]+3[\widetilde C_{2i+3}]+5[\widetilde C_{2i+4}]+7[\widetilde C_{2i+5}]+12[\widetilde C_{2i+6}]+\cdots
\end{array}$$
$$\widetilde{e}_{2i+1}=(\widetilde{e}_{2i+1}-\widetilde{e}_{2i+2})+\widetilde{e}_{2i+2}=[\widetilde{C}_{2i+1}]+\sum_{j=0}^\infty a_j[\widetilde{C}_{2(i+j+1)}]+b_j[\widetilde C_{2(i+j+1)+1}]\ .
$$

\paragraph{\it Inoue-Hirzebruch surfaces with $b_2=2$ and two cycles}
In this case we have two cycles  $C_0$, $C_1$ consisting of a rational curve  with double point. The intersection numbers are 
$$C_0^2=-1,\quad C_1^2=-1\ ,$$
and the decompositions of $[C_i]$ with respect to the standard basis are
$$[C_0]= -e_1,\quad [C_1]= -e_0\ .$$
In 
$\widetilde X$  we have two disjoint chains of rational curves 
$\sum_{i\in\bb Z}\widetilde C_{2i}$, $\sum_{i\in\bb Z}\widetilde C_{2i+1}$,
where
$$[\widetilde C_{2i}]= \widetilde e_{2i}-\widetilde e_{2i+1}- \widetilde e_{2i+2}\ ,  [\widetilde C_{2i+1}]= \widetilde e_{2i+1}-\widetilde e_{2i+2}-\widetilde e_{2i+3}\ .$$
Therefore  denoting again by $(u_j)_{j\in\mathbb N}$ the Fibonacci sequence used before we obtain:
$$\begin{array}{lclcl}
\widetilde e_{2i}&=&\displaystyle\sum_{j\ge 0} u_j(\widetilde e_{2i+j}-\widetilde e_{2i+j+1}- \widetilde e_{2i+j+2})&=&\displaystyle\sum_{j\ge 0}u_j[\widetilde C_{2i+j}]\\
&&&&\\
\widetilde e_{2i+1}&=&\displaystyle \sum_{j\ge 0} u_j(\widetilde e_{2i+j+1}-\widetilde e_{2i+j+2} - \widetilde e_{2i+j+3})&=&\displaystyle\sum_{j\ge 0}u_j[\widetilde C_{2i+j+1}]\ .
\end{array}$$

The next result deals with the general case:
 
\begin{Prop} \label{series} Let $X=X(\Pi,\s)$ be a GSS surface with $b_2(X)=n\ge 1$ and $(\widetilde e_i)_{i\in\Z}$ be the standard  base of $H_2(\widetilde X,\Z)$ constructed above. Then for any $i\in\Z$ there exists a  well defined infinite series with  positive integer coefficients $\sum_{j\ge 0} \a^i_{j}[\widetilde C_{i+j}]$ whose sum is the image of $\widetilde e_i$ in the Borel-Moore homology group $H_2^{\rm BM}(\widetilde X,\Z)$.
Moreover, the  sequence $(\a^i_{j})_{j\ge 0}$ is always   increasing, has $\alpha^i_0=1$, and 
 $$\left\{ \begin{array}{lll}
 \forall j,\quad \a^i_{j}=1&{ \it iff}& X\ {is\ a\ Enoki\ surface\ ,}\\
 &&\\
 \lim_{j\to \infty}\a ^i_j=\infty&{\it  iff}& X\ { is\ not\ a\ Enoki\ surface\ .}
 \end{array}\right.$$ 
\end{Prop}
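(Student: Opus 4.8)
The plan is to unwind the recursive structure already visible in the examples. Recall that in $\widetilde X$ we have, for each $k\in\Z$, a compact rational curve $\widetilde C_k$ with homology class $[\widetilde C_k]=\widetilde e_k-\sum_{i=1}^{s_k}\widetilde e_{k+i}$, where $1\le s_k\le n+1$ by \eqref{dec}, and $s_k$ is periodic of period $n$ in $k$ (since $\widetilde C_{nj+k}$ is a copy of $C_k$). The first step is to show that the desired series exists and is unique as a \emph{formal} identity: one wants positive integers $\a^i_j$ (with $\a^i_0=1$) such that, working in the completed module $\prod_{s\ge i}\Z\widetilde e_s$ (equivalently, in the Borel--Moore homology of the pseudo-convex-bounded part of $\widetilde X$), one has $\widetilde e_i=\sum_{j\ge 0}\a^i_j[\widetilde C_{i+j}]$. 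Substituting the expression for $[\widetilde C_{i+j}]$ and collecting the coefficient of each $\widetilde e_m$ ($m\ge i$) yields a triangular linear system: the coefficient of $\widetilde e_i$ forces $\a^i_0=1$, and the coefficient of $\widetilde e_m$ for $m>i$ gives a recursion expressing $\a^i_m$ as a sum of earlier $\a^i_j$'s indexed by those $j<m$ with $j\le m\le j+s_{i+j}$, i.e.
\[
\a^i_m=\sum_{\substack{0\le j<m\\ m-s_{i+j}\le j}}\a^i_j\ .
\]
Because $s_{i+j}\ge 1$ always, the index $j=m-1$ always contributes, so this recursion has a unique solution with strictly positive integer values, determined by the (finite, periodic) data $(s_0,\dots,s_{n-1})$. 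This settles existence, uniqueness, integrality, positivity, and $\a^i_0=1$; that it actually represents $\widetilde e_i$ in $H_2^{\rm BM}(\widetilde X,\Z)$ is then a matter of identifying the formal computation with the Borel--Moore boundary map, using that $H_2^{\rm BM}(\widetilde X,\Z)=\varprojlim H_2(\widehat X_k,\Z)$ via the $\iota_k,\pi_{k,l}$ diagram and that each $\widetilde e_i$ eventually becomes, after applying enough blow-downs, an exceptional class while $[\widetilde C_k]$ is its proper transform.

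The second step is monotonicity of $(\a^i_j)_{j\ge 0}$. Since the $j=m-1$ term alone already gives $\a^i_m\ge\a^i_{m-1}$, and all terms are positive, the sequence is (weakly) increasing; I would record $\a^i_m\ge\a^i_{m-1}$ directly from the recursion, noting that equality can only hold when $j=m-1$ is the \emph{only} admissible index, i.e. when $s_{i+j}=1$ for the relevant $j$.

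The third step is the Enoki dichotomy. If $X$ is an Enoki surface then every $\widetilde C_k$ is a $(-1)$-curve in $\widetilde X$ (the cycle case $[\widetilde C_k]=\widetilde e_k-\widetilde e_{k+1}$), so $s_k\equiv 1$ and the recursion collapses to $\a^i_m=\a^i_{m-1}$, giving $\a^i_j\equiv 1$; this recovers the formula $\widetilde e_i=\sum_{j\ge0}[\widetilde C_{i+j}]$ of Example \ref{Enoki}. Conversely, if $X$ is \emph{not} an Enoki surface then some curve has self-intersection $\le -2$, i.e. $s_{k_0}\ge 2$ for some $k_0$, and by periodicity this happens for infinitely many indices in any arithmetic progression mod $n$. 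Whenever $s_{i+j}\ge 2$ the recursion for $\a^i_{j+2}$ (or more precisely for $\a^i_m$ with $m=j+2$, using that both $j$ and $j+1$ are then admissible) picks up at least two positive terms, so $\a^i_{j+2}\ge\a^i_{j+1}+\a^i_j\ge\a^i_j+1$. Since this strict-increase event recurs infinitely often as $j\to\infty$ (with bounded gaps, at most $n$), the sequence $\a^i_j$ is unbounded, hence $\lim_{j\to\infty}\a^i_j=\infty$. I would phrase this cleanly by choosing, for each residue where a $(\le-2)$-curve occurs, a fixed offset $\le n$ and showing $\a^i_{j+n+1}\ge\a^i_j+1$ for all $j$, which immediately yields divergence.

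I expect the main obstacle to be not the combinatorics but the bookkeeping that converts the formal series identity into a genuine statement in $H_2^{\rm BM}(\widetilde X,\Z)$ with the correct sign and grading conventions — in particular making rigorous the passage to the inverse limit along the blow-up tower $(\widehat X_k,\pi_{k,l})$, checking that the infinite sum converges in the Borel--Moore topology precisely because it is bounded towards the pseudo-convex end (only finitely many terms meet any $\widetilde X_k$), and confirming that the $\widetilde C_k$ are indeed the proper transforms of the exceptional curves $E_k$ so that \eqref{dec} applies with the stated periodicity of $s_k$. The positivity and divergence statements, once the recursion is in hand, are short; the care is all in setting up the homological framework so that "$\sum_{j\ge0}\a^i_j[\widetilde C_{i+j}]$ represents $\widetilde e_i$" is a theorem rather than a definition.
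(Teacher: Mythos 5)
Your proposal is correct and takes essentially the same route as the paper: the same triangular recursion $\a^i_{m}=\sum_{j\in J}\a^i_j$ obtained by collecting the coefficient of each $\widetilde e_m$, monotonicity from the always-admissible index $j=m-1$, and the Enoki dichotomy via $s_k\equiv 1$ versus some $s_k\ge 2$ recurring with period $n$ (giving $\a^i_{j+2}\ge\a^i_{j+1}+\a^i_j$ infinitely often, hence divergence). Only a cosmetic slip: in the Enoki case the curves $\widetilde C_k$ are $(-2)$-curves rather than $(-1)$-curves, and ``not Enoki'' corresponds to some $\widetilde C_k^2\le -3$, i.e. $s_k\ge 2$ --- the values of $s_k$ you actually feed into the recursion are the correct ones, so the argument is unaffected.
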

\begin{proof} We determine the coefficient $\alpha^i_k$ of $[\widetilde C_{i+k}]$ by induction on $k\ge 0$ such that we have the  congruence
\begin{equation}\label{cong}
\widetilde e_i\equiv \sum_{j=0}^k \alpha^i_j[\widetilde C_{i+j}]
\hbox{ modulo  classes } \widetilde e_{j}\ ,\   j>k\ . \end{equation} 
For $k=0$ we use formula (\ref{dec}) to obtain
$$  [C_i]=\widetilde e_i-\sum_{j=1}^{s_i}\widetilde e_{i+j}\equiv  \widetilde e_i \hbox{ mod classes }\widetilde e_{i+j}\ ,\ j>0\ ,$$
so we get a well-defined solution $\alpha^i_0=1$.

Suppose now that the development is determined till   $k\in\mathbb N$ so that
$$\widetilde e_i\equiv \sum_{j=0}^k \a^i_{j}[\widetilde C_{i+j}]=\sum_{j=0}^k \a^i_{j}\big(\widetilde e_{i+j}-\sum_{l=1}^{s_{i+j}}\widetilde e_{i+j+l}\big)  \hbox{ mod classes }\widetilde e_{i+j}\ ,\ j>k\ .$$
We define $J_{k+1}^i=\big\{j\in\{0,\dots,k\}\mid\     i+j+1\leq i+k+1\leq i+j+ s_{i+j}\big \}$.  With this notation we obtain a well defined solution 
\begin{equation}\label{identity}
\a^i_{k+1}=\sum_{l\in J_{k+1}^i}\a^i_{l}  \end{equation}
which guarantees the congruence (\ref{cong}) for $k+1$. \\

We prove now that the obtained sequence $(\a^i_j)_{j\in\mathbb N}$ is increasing as claimed. Since for any $l\in\bb Z$ it holds  $l+1\leq l+s_l$, we have  $k+1\in J_{k}^i$. Therefore  $\a^i_{k}$ intervenes in the decomposition (\ref{identity}), so  $\a^i_{k+1}\ge \a^i_{k}$. 

If $X$ is an Enoki surface, then $\a_{i+j}=1$ for every $j\ge 0$ as we have seen in Example \ref{Enoki}. If $X$ is not an Enoki surface there is at least one curve $\widetilde C_{i+j}$ such that $\widetilde C_{i+j}^2\le -3$, hence, using again formula (\ref{dec}) we see that    $\widetilde e_{i+j+2}$ appears in the decompositions of the  curves $\widetilde C_{i+j}$ and $\widetilde C_{i+j+1}$, hence    $j,\ j+1\in J^i_{j+2}$ and $\a_{j+2}\geq \a_j+\a_{j+1}>\a_{j+1}$. Since the configuration of the curves in $\widetilde X$ is periodic, we have $\widetilde C^2_{i+j+kn}=\widetilde C^2_{i+j}\leq -3$ for every $k\in\mathbb N$, so $\a_{j+kn+2}>\a_{j+kn+1}$, which proves the result. 
\end{proof}  
\begin{Rem}\label{unicity} The decomposition $\widetilde e_i=\sum_{j\ge 0} \a^i_{j}[\widetilde C_{i+j}]$  given by Proposition \ref{series} is the only decomposition of the image of   $\widetilde e_i$ in $H_2^{\rm BM}(\widetilde X,\Z)$  as the sum of a series -- bounded  towards the pseudo-convex end  -- of classes of compact curves.
\end{Rem}
\begin{proof} Indeed, it suffices to see that if a sum $\sum_{i\geq k} a_i[\widetilde C_i]$ vanishes in  $H_2^{\rm BM}(X,\Z)$, then all coefficients $a_i$ vanish. This follows easily using the Poincaré duality isomorphism $PD: H_2^{\rm BM}(\widetilde X,\Z)\textmap{\simeq} H^2(\widetilde X,\Z)$. Using the geometric interpretation of the Poincaré duality in terms of intersection numbers we get
$$0=\langle PD(\sum_{i\geq k} a_i[\widetilde C_i]), \widetilde e_k\rangle=-a_k \ .
$$
By induction we get $a_i=0$ for all $i\geq k$.
\end{proof}
Note that for an Enoki surface $X$  the classes $[\widetilde C_i]$ are linearly independent in $H_2(\widetilde X,\Z)$, but $\sum_{i\in\Z} [\widetilde C_i]=0$ in $H_2^{\rm BM}(\widetilde X,\Z)$. 
\vspace{3mm}

Let  now $p:{\cal X}\to B$ be a  holomorphic family of GSS surfaces and let  %
$$e=(e_b)_{b\in B}\in H^0(B,\underline{H}_2)\ ,\ \widetilde e=(\widetilde e_b)_{b\in B}\in H^0(B,\underline{\widetilde H}_2)$$
 be families of homology classes as in the first extension theorem Theorem \ref{first}. 

Our goal now is to identify explicitly the effective divisor $\widetilde E_{b_0}$ for $b_0\in H_e$. As we explain in the introduction this divisor is interesting because it can be written as a limit of simply exceptional divisors $\widetilde E_{b_n}$ for a sequence $b_n\to b_0$, $b_n\in B_e$.  

We suppose for simplicity that $X_{b_0}$ is minimal  because, if not, we can blow down the exceptional divisors in $X_{b_0}$ (and their deformations in $X_{b}$ for $b$ around $b_0$) and reduce the problem to the case of a minimal central fiber.

\begin{Th} \label{GSScase} Under the assumptions and with the notations above, the effective  divisor $\widetilde E_{b_0}$ is the sum of the infinite series   $\sum_{j\ge 0} \a^i_{j}\widetilde C_{i+j}$ given by Proposition \ref{series}.
\end{Th}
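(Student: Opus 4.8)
The plan is to pin down $\widetilde E_{b_0}$ by three properties already provided by the machinery built above, and then to quote the uniqueness of the fundamental series. As indicated just before the statement, I would first reduce to the case in which $X_{b_0}=X(\Pi,\s)$ is minimal. Two facts are then available for free. By Theorem \ref{first} -- which applies to our family of GSS surfaces in view of Remark \ref{withGSS} -- the fiber $\widetilde E_{b_0}$ is a genuine effective divisor of $\widetilde X_{b_0}$, bounded towards the pseudo-convex end and unbounded towards the pseudo-concave end. By Remark \ref{BM}, using the flatness of $\widetilde{\cal E}$ over $B$, this divisor represents the image of $\widetilde e_{b_0}$ in $H_2^{\rm BM}(\widetilde X_{b_0},\Z)$.

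The first genuine step is to show that every irreducible component of $\widetilde E_{b_0}$ is one of the compact rational curves $\widetilde C_k$. Such a component is a compact irreducible curve of $\widetilde X_{b_0}$, so its image under the universal covering map $\alpha$ is a compact irreducible curve of $X_{b_0}$, hence -- $X_{b_0}$ being a minimal GSS surface -- one of its $n$ rational curves $C_0,\dots,C_{n-1}$. Since $\alpha^{-1}(C_0\cup\dots\cup C_{n-1})=\bigcup_{k\in\Z}\widetilde C_k$, with the $\widetilde C_k$ as its irreducible components (see \cite{D1}), the component in question coincides with some $\widetilde C_k$. Writing $\widetilde e_{b_0}=\widetilde e_i$ in the standard base $(\widetilde e_k)_{k\in\Z}$ of $H_2(\widetilde X_{b_0},\Z)$ -- such an index $i$ exists because $\widetilde e_{b_0}$ is a lift of a standard basis element of $X_{b_0}$ and $\{\alpha_*(\widetilde e_k)\}_{0\le k\le n-1}$ is precisely that basis -- I would conclude that $\widetilde E_{b_0}=\sum_{k\in\Z}m_k\widetilde C_k$ for non-negative integers $m_k$, with $m_k=0$ for $k$ small enough by the boundedness towards the pseudo-convex end; this sum is automatically locally finite, since the $\widetilde C_k$ escape to the pseudo-concave end.

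The second step reads off the coefficients. By the Borel--Moore property one has $\sum_k m_k[\widetilde C_k]=\widetilde e_i$ in $H_2^{\rm BM}(\widetilde X_{b_0},\Z)$, and this is an expression of the image of $\widetilde e_i$ as a series of classes of compact curves that is bounded towards the pseudo-convex end. On the other hand Proposition \ref{series} gives exactly such an expression, namely $\widetilde e_i=\sum_{j\ge 0}\a^i_j[\widetilde C_{i+j}]$. By the uniqueness statement of Remark \ref{unicity} -- whose proof, pairing with the classes $\widetilde e_k$ through the Poincar\'e duality $H_2^{\rm BM}(\widetilde X_{b_0},\Z)\simeq H^2(\widetilde X_{b_0},\Z)$, works verbatim for series with arbitrary integer coefficients -- the two expressions must coincide, so $m_k=\a^i_{k-i}$ for $k\ge i$ and $m_k=0$ otherwise. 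This yields the asserted identity of divisors $\widetilde E_{b_0}=\sum_{j\ge 0}\a^i_j\widetilde C_{i+j}$.

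The main obstacle -- essentially the only point that is not a purely formal consequence of the earlier results -- is the first step: that the irreducible components of the limit divisor are forced to be among the curves $\widetilde C_k$. This is where the minimality of $X_{b_0}$, or rather the complete description of the curves on a minimal GSS surface together with the description of their lifts to the universal cover, enters in an essential way; granting it, the Borel--Moore bookkeeping and the uniqueness of the fundamental series finish the argument.
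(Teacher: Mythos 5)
There is a genuine gap at exactly the point you yourself single out as the crux. In your first step you write that an irreducible component of $\widetilde E_{b_0}$ ``is a compact irreducible curve of $\widetilde X_{b_0}$'' and then argue from its image under $\alpha$; but compactness of the components is precisely what has to be proved, not an input. The divisor $\widetilde E_{b_0}$ lives in the \emph{non-compact} surface $\widetilde X_{b_0}$, and a priori it could contain a non-compact irreducible component (an ``open Riemann surface in the limit''), bounded towards the pseudo-convex end just as the whole fiber is. Your covering-map argument gives nothing in that case: the image under $\alpha$ of a non-compact irreducible curve of $\widetilde X_{b_0}$ need not be an analytic curve of $X_{b_0}$ at all (recall that by Remmert--Stein the curves $E_{i,z}$ accumulate to the whole of $X_0$, so such images can be dense), so you cannot conclude that the component is one of the $\widetilde C_k$. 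This is not a technicality: the paper stresses, right after Theorem \ref{thirdTh}, that ruling out such open components is ``the difficulty,'' and it is the reason the compact-series statement is not available in the setting of the second extension theorem.

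The paper closes this gap with Lemma \ref{final}: the universal cover of a GSS surface contains no irreducible non-compact $1$-dimensional analytic subset bounded towards the pseudo-convex end. Its proof uses the holomorphic contraction $q_k:\widetilde X\to\widehat X_k$ collapsing all $\widetilde C_l$, $l>k$, to a point, together with the second Remmert--Stein extension theorem: a non-compact component bounded towards the pseudo-convex end would map into a punctured ball, and its closure there would be a compact curve, which is absurd. Once this lemma is in place, every component of $\widetilde E_{b_0}$ is compact, hence one of the $\widetilde C_k$, and the rest of your argument (the Borel--Moore class via flatness and Remark \ref{BM}, then the uniqueness of the series from Remark \ref{unicity}, which indeed works for arbitrary integer coefficients by pairing with the $\widetilde e_k$ under Poincar\'e duality) is correct and coincides with the paper's. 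So your bookkeeping is fine, but the decisive step is missing and cannot be obtained by the covering-map argument you sketch; it requires an argument of the type of Lemma \ref{final}.
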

\begin{proof}  Our extension theorem yields a divisor $\widetilde {\cal E}\subset \widetilde{\cal X}$ flat over $B$ whose fiber over $b_0$ is $\widetilde E_{b_0}$, so the class defined by $\widetilde E_{b}$ in Borel-Moore homology  is $\widetilde e_b$  for any $b\in B$. The result follows directly from  Remark \ref{unicity} and Lemma \ref{final} below.
\end{proof}
\begin{Lem}\label{final} Let $X$ be a GSS surface with $b_2(X)>0$ and $\alpha:\widetilde X\to X$ its universal cover. Then $\widetilde X$ does not contain any irreducible   non-compact 1-dimensional analytic subset bounded towards the pseudo-convex end. 
\end{Lem}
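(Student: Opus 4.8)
The plan is to exploit the explicit structure of $\widetilde X$ as the infinite union $\bigcup_{s\in\Z}\mathfrak{A}_s$ of the compact pieces $\mathfrak{A}_s$, where each $\mathfrak{A}_s$ is biholomorphic to a blown-up ball with a small ball removed, with strictly pseudo-convex boundary component $\partial_-(\mathfrak{A}_s)\simeq S_{s-1}$ and strictly pseudo-concave boundary $\partial_+(\mathfrak{A}_s)\simeq S_s$. Suppose $Y\subset\widetilde X$ is an irreducible non-compact $1$-dimensional analytic subset which is bounded towards the pseudo-convex end. Boundedness means there is $k\in\Z$ with $Y\subset\widetilde X\setminus\widetilde X_k=\bigcup_{s>k}\mathfrak{A}_s$; equivalently $Y$ is contained in one of the ``half-infinite'' pseudo-concave pieces.

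First I would pass to the surfaces $\widehat X_l$ which fill in the pseudo-concave sphere $\partial\widetilde X_l$ with a ball. For each $l>k$ the set $\iota_l(Y\cap\widetilde X_l)$ is an analytic subset of the open subset $\iota_l(\widetilde X_l)\subset\widehat X_l$; the key point is that because $\partial\widetilde X_l\simeq S_l$ is \emph{strictly pseudo-concave} as boundary of $\widetilde X_l$, an irreducible analytic curve cannot accumulate onto it from the inside without being relatively compact — so $Y\cap\widetilde X_l$ is in fact relatively compact in $\widetilde X_l$, hence compact, hence (being a closed analytic subset of the compact surface $\widehat X_l$) its closure $\overline{\iota_l(Y\cap\widetilde X_l)}$ is a compact analytic curve $D_l\subset\widehat X_l$. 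Letting $l\to\infty$ we obtain a ``limit'' compact curve: more precisely, since $Y$ is irreducible and non-compact it meets $\mathfrak{A}_s$ for infinitely many $s$, so the $D_l$ have uniformly many components and uniformly bounded self-intersections, and via the blow-down maps $\pi_{l,m}$ one gets a coherent tower of compact curves representing, in Borel–Moore homology of $\widetilde X$, a series $\sum_{s\geq k'} a_s[\widetilde C_s]$ with \emph{positive} coefficients and with $a_s>0$ for infinitely many $s$ — this is where the periodicity of the configuration of curves (Remark~\ref{s}, formula~(\ref{dec})) and the positivity in Proposition~\ref{series} are used to guarantee that no cancellation can occur and that the non-compactness of $Y$ forces genuinely infinitely many non-zero coefficients.

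The contradiction then comes from intersection theory exactly as in the proof of Remark~\ref{unicity}: using the Poincaré duality isomorphism $PD:H_2^{\rm BM}(\widetilde X,\Z)\textmap{\simeq}H^2(\widetilde X,\Z)$ and pairing the class of $Y$ with a well-chosen $\widetilde e_j$ one reads off that $Y$ would have to have a non-trivial compact component, contradicting irreducibility and non-compactness. Alternatively — and this is probably the cleaner route — one argues directly: an irreducible non-compact $Y$ bounded towards the pseudo-convex end would, after intersecting with each $\widetilde X_l$, give a \emph{non-empty} analytic curve $Y\cap\widetilde X_l$ for every large $l$, each meeting the fixed compact set $\mathfrak{A}_{k+1}$; but these are pairwise related by the deck transformation $f$ (moving towards the pseudo-concave end), so one is in the situation of Lemma~\ref{lmm} with $M=\widetilde X$, $D_n=f^n(\overline Y{}')$ for a suitable compact enlargement, $K=\mathfrak{A}_{k+1}$, forcing the orbit $\{f^n(Y)\}$ to be finite, hence $Y$ to be $f$-periodic, hence (its image in $X$ being a compact curve and $Y$ a connected component of its preimage) $Y$ itself compact — a contradiction.

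The main obstacle is the step producing, from the hypothetical non-compact $Y$, a \emph{bona fide} compact analytic curve in each filled surface $\widehat X_l$: one must be sure that $Y\cap\widetilde X_l$ does not cling to the strictly pseudo-concave boundary $\partial\widetilde X_l$ in a way that prevents it from extending across, and that the extension is a genuine divisor rather than just an analytic set of the wrong dimension. This is handled by Hartogs-type / Remmert–Stein extension across the boundary sphere together with the strict pseudo-concavity, but it is the point requiring care. Everything downstream — the tower of blow-downs, the positivity of the coefficients, and the final intersection-number contradiction — is then essentially bookkeeping built on Proposition~\ref{series}, Remark~\ref{unicity} and Lemma~\ref{lmm}.
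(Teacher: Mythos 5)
Both of your routes have genuine gaps, and both miss the one tool that makes the paper's proof work. In your main route, the step producing a compact analytic curve $D_l\subset\widehat X_l$ fails: since $Y$ is irreducible (hence connected), closed in $\widetilde X$, non-compact and bounded towards the pseudo-convex end, it cannot lie in finitely many pieces $\mathfrak{A}_s$, so it meets $\mathfrak{A}_s$ for all large $s$ and therefore \emph{crosses} the sphere $\partial\widetilde X_l$ for every large $l$. Hence $Y\cap\widetilde X_l$ is a curve with boundary on $\partial\widetilde X_l$, and its closure in $\widehat X_l$ is not an analytic subset near those boundary points; its relative compactness (which follows simply from $Y\cap\overline{\widetilde X_k}=\emptyset$, not from pseudo-concavity) does not make the closure analytic, and $\widehat X_l$ is in any case not compact. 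A Rothstein/Hartogs-type extension across the strictly pseudo-concave sphere only extends the curve to a small collar, not to a compact curve filling the glued ball -- the true continuation of $Y$ lives in $\widetilde X$ and keeps running towards the pseudo-concave end. The final intersection-theoretic step is also not a proof: nothing identifies the Borel--Moore class of $Y$ with a series of compact curve classes, so pairing with $\widetilde e_j$ as in Remark \ref{unicity} yields no contradiction. Your alternative route has a parallel gap: Lemma \ref{lmm} requires that the union of the divisors $D_n$ be a divisor, i.e.\ a locally finite family. With the forward translates $f^n(Y)$ the union is locally finite but no fixed compact set meets all of them (they escape towards the pseudo-concave end), while with the backward translates $f^{-n}(Y)$ every translate does meet a fixed compact piece, but the local finiteness of their union is exactly what cannot be assumed -- it is essentially what you are trying to prove -- so the lemma does not apply. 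Moreover, even granting a finite orbit, ``$f^N$-invariant, hence compact'' is a non sequitur: an $f^N$-invariant irreducible curve descends to a compact curve in the finite cover $\widetilde X/\langle f^N\rangle$ without having to be compact itself (compare elliptic curves on Hopf surfaces, whose preimages in the universal cover are connected and non-compact); excluding this requires the classification of curves on GSS surfaces, not covering-space generalities.

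The paper's argument is different and much shorter, and hinges on a map you never use: after reducing to $X$ minimal and identified with $X(\Pi,\sigma)$, there is a surjective \emph{holomorphic} map $q_k:\widetilde X\to\widehat X_k$ which contracts all the compact curves $\widetilde C_l$, $l>k$, to the single point $\widehat O_k$ and restricts to a biholomorphism $\widetilde X\setminus\bigcup_{l>k}\widetilde C_l\simeq\widehat X_k\setminus\{\widehat O_k\}$. Choosing $k$ so that $S$ misses $\overline{\widetilde X_k}$, the image $q_k(S)$ lies in the open ball glued along $\partial\widetilde X_k$, and $q_k(S)\setminus\{\widehat O_k\}$ is a closed $1$-dimensional analytic subset of the punctured ball; by the Remmert--Stein theorem its closure is an analytic curve in the ball, and this curve is compact -- impossible. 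In other words, the whole pseudo-concave end is compressed holomorphically into a ball in a single stroke, which is precisely what your level-by-level tower of surfaces $\widehat X_l$ cannot achieve.
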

\begin{proof}

Suppose that there exists such a subspace $S\subset\widetilde X$. We can suppose  that $X$ is minimal and was identified with $X(\Pi,\s)$ as explained above. Since $S$ is bounded towards the  pseudo-convex end we can find $k$ sufficiently large  such that $S$ does not intersect the closure of $\widetilde X_k$.

We have a natural surjective {\it holomorphic} map $q_k:\widetilde X\to \widehat X_k$ which contracts all compact curves $\widetilde C_l$ for $l>k$ to a point $\widehat O_k\in \widehat X_k$ (the center of ball $B_{k-n\left[\frac{k}{n}\right]}$ involved in the construction of $\widehat X_k$) and defines a biholomorphism
$$\widetilde X\setminus  \bigcup_{l>k} \widetilde C_l\textmap{\simeq} \hat X_k\setminus
\{\widehat O_k\}
$$
(see \cite{D1}).  Since $S$ does not intersect the closure of $\widetilde X_k$ the image $ S_0:=q_k(S)$ is contained in the complement of this closure in $\hat X_k$, which is the open ball $B_{k-n\left[\frac{k}{n}\right]}$.

But then $S_0\setminus\{\widehat O_k\}$ is  a closed 1-dimensional subspace of a punctured open 2-ball, so its closure in this ball  will be  a closed 1-dimensional subspace of the ball, by the second Remmert-Stein theorem. This extension would be compact, which is of course impossible.

\end{proof}


\def\wt{\widetilde}

Coming back to our second extension theorem Theorem \ref{second}, and taking into account Theorem \ref{GSScase} we  see that, when a fiber $X_b$ ($b\in A)$ of the family is a GSS surface, the obtained effective divisors $\wt{E}_b$ representing a lift $\wt{e}_b$ are always given by   series of {\it compact} curves. We end our article with a theorem (mentioned in the abstract and the introduction) which gives an interesting motivation of our results: conversely, if in the conditions of Theorem \ref{second}, for a point $b\in A$ all the irreducible components of the  divisors $\wt{E}_b$ are compact, then $X_b$ has $b_2$ rational curves, so it is a GSS surface. We believe that {\it the irreducible components of the  divisors $\wt{E}_b$ are always compact}, which would have a very important consequence:  
the main conjecture {\bf C} holds for any class VII surface which fits as the central fiber of a bidimensional family ${\cal X}\to\Delta\subset\C^2$ whose fibers 
$X_z$, $z\ne 0$ are GSS surfaces. Therefore, for proving conjecture {\bf C} for this class of surfaces, it suffices to prove that for  a sequence $(b_n)_n$ of $\Delta\setminus\{0\}$ converging to $0$, open irreducible components  cannot appear in the limit process $\wt{E}_{b_n}\to\wt{E}_0$, but only {\it infinite bubbling} i.e. the appearance of infinitely many compact irreducible components.
\\

Let $X$ be a class VII surface with $b_2>0$ having the oriented topological type of $(S^1\times S^3)\# n\overline{\mathbb P}^2$. In the same way as explained at the beginning section for GSS surfaces we obtain a unique  (unordered)    basis $\mathfrak{B}\subset H_2(X,\Z)$ trivializing the intersection form $q_X$ of $X$ such that $c_1({\cal K}_X)=\sum_{e\in\mathfrak{B}}    PD(e)$. This basis   will be  called again {\it the standard basis} of $H_2(X,\Z)$.   Note that the homology class of any simply exceptional divisor of $X$ (if such a divisor exists) is an element of this standard basis.
Any compact curve $C\subset \widetilde X$ is a smooth rational curve. Indeed, its projection $\alpha(C)$ on $X$ is either a  
\begin{enumerate}
\item smooth rational curve, or
\item  a rational curve with a simple singularity, or
\item or an elliptic curve 
\end{enumerate}
(see \cite{N1} p. 399). But in the last two cases $\alpha(C)$ would be a cycle in $X$, and  it is known  that the image of its fundamental group  in $\pi_1(X)$  of a cycle is infinite (see \cite{N1} p. 404).  In the second case $\alpha^{-1}(\alpha(C))$ is a countable union of smooth  rational curves, and the third case  is excluded, because the pre-image of such a cycle does not contain any compact curve.  

Therefore
\begin{equation}\label{genus}0=g_a(C)=1+\frac{1}{2}\langle c_1({\cal K}_{\tilde X}(C)),C\rangle
\end{equation}
  Since all the elements of the standard basis $\mathfrak{B}$ are represented by 2-spheres, they can be lifted to $\widetilde X$, so we get a basis $\widetilde{\mathfrak{B}}$  of $H_2(\widetilde X,\Z)$ which is mapped on  $\mathfrak{B}$.   We can identify naturally $H^2(\widetilde{X},\Z)$ with $\prod_{\widetilde{e}\in\widetilde{\mathfrak B}}\Z PD(   \widetilde{e})$, and we obtain 
$$c_1({\cal K}_{\tilde X})=\alpha^*(c_1({\cal K}_X))=\alpha^*(\sum_{e\in{\mathfrak B}} PD(e))=\sum_{  \widetilde{e}\in \widetilde{\mathfrak{B}}} PD(   \widetilde{e})\ .
$$
Decomposing $[C]=\sum_{ \widetilde{e}\in\widetilde{\mathfrak B}} x_{\widetilde{e}} \widetilde{e}$ (with finitely many non-zero coefficients $x_{\widetilde{e}}$) and  using the identities $\langle PD(\widetilde{e}),\widetilde{f}\rangle=-\delta_{\widetilde{e}\widetilde{f}}$ equation (\ref{genus}) becomes:
$$\sum_{\widetilde{e}\in \widetilde{\mathfrak B}} x_{\widetilde{e}}(1+x_{\widetilde{e}})=2\ ,
$$
which can hold only if there exists $\widetilde{e}_C\in \widetilde{\mathfrak B}$ and a finite set $\widetilde{\mathfrak B}_C\subset\widetilde{\mathfrak B}$ with $\widetilde{e}_C\not\in \widetilde{\mathfrak B}_C$ such that either:
\begin{enumerate}
\item[a)] $[C]=  \widetilde{e}_C-\sum_{\widetilde{e}\in \widetilde{\mathfrak B}_C} \widetilde{e}$ , or
\item[b)]  $[C]=-2 \widetilde{e}_C-\sum_{\widetilde{e}\in \widetilde{\mathfrak B}_C} \widetilde{e}$\ .
\end{enumerate}

We  say that a curve $C\subset \wt{X}$ is of type (a) or (b) if the decomposition of $[C]$ has the form a) or b) respectively.

\begin{Th}\label{converse} Let $X$ be a minimal class VII surface with $b_2(X)>0$ having the topological type $(S^1\times S^3)\# n\overline{\mathbb P}^2$. Suppose that for every $e\in\mathfrak{B}$ there exists a lift $\wt{e}\in\wt{\mathfrak{B}}$ and an effective divisor $\wt E\subset\wt{X}$ representing the image of $\wt{e}$ in  $H_2^\mathrm{BM}(X,\Z)$ and whose irreducible components are all compact. Then $X$ has $b_2(X)$ rational curves, so it is a GSS surface.
\end{Th}
\begin{proof} The Borel-Moore homology group $H_2^\mathrm{BM}(\wt X,\Z)$ can be identified with $\prod_{\wt{e}\in\wt{\mathfrak{B}}}\Z\wt{e}$. Since $\wt E$ represents the class $\wt{e}$, it follows that there exists an irreducible  component $C_e$ of $\wt E$ which has type (a) with $\wt{e}_{C_e}=e$. The image $D_e:=\alpha(C_e)$ is either a smooth rational curve, or a singular rational curve with a double point. In both cases the irreducible components of $\alpha^{-1}(D_e)$ are $f^n(C_e)$ ($n\in\Z$), where $f$ is a generator of the automorphism group $\mathrm{Aut}_{X}\wt X\simeq\Z$.

For $e\ne e'\in \mathfrak{B}$ we see that the homology  classes $[C_e]$,  $[C_{e'}]$ are not congruent modulo $\mathrm{Aut}_{X}\wt X$, because the first terms in the decompositions of these classes are the lifts $\wt{e}$, $\wt{e'}$. Therefore $D_e\ne D_{e'}$.  In this way we obtain $n=b_2(X)$ rational curves in $X$, proving that $X$ is a GSS surface, by the main result in \cite{DOT}.
\end{proof}

\end{document}